\newcommand{\bburl}[1]{\textcolor{blue}{\url{#1}}}
\newcommand\be{\begin{equation}}
\newcommand\ee{\end{equation}}
\newcommand\bea{\begin{eqnarray}}
\newcommand\eea{\end{eqnarray}}
\newcommand\bi{\begin{itemize}}
\newcommand\ei{\end{itemize}}
\newcommand\ben{\begin{enumerate}}
\newcommand\een{\end{enumerate}}
\newcommand\numberthis{\addtocounter{equation}{1}\tag{\theequation}}
\newtheorem{thm}{Theorem}[section]
\newtheorem{conj}[thm]{Conjecture}
\newtheorem{cor}[thm]{Corollary}
\newtheorem{lem}[thm]{Lemma}
\newtheorem{prop}[thm]{Proposition}
\newtheorem{defi}[thm]{Definition}
\newcommand{\N}{\mathbb{N}}
\newcommand{\E}{\mathbb{E}}
\newcommand\cycle[2][\,]{%
  \readlist\thecycle{#2}%
  (\foreachitem\i\in\thecycle{\ifnum\icnt=1\else#1\fi\i})%
}
\newcommand{\ga}{\alpha}                  
\newcommand{\gb}{\beta}
\newcommand{\gep}{\epsilon}
\newcommand{\gl}{\lambda}
\newcommand{\p}{\ensuremath{\mathbb{P}}}
\newcommand{\f}{\mathcal{F}}
\numberwithin{equation}{section}
\def\@tocline#1#2#3#4#5#6#7{\relax
  \ifnum #1>\c@tocdepth 
  \else
    \par \addpenalty\@secpenalty\addvspace{#2}%
    \begingroup \hyphenpenalty\@M
    \@ifempty{#4}{%
      \@tempdima\csname r@tocindent\number#1\endcsname\relax
    }{%
      \@tempdima#4\relax
    }%
    \parindent\z@ \leftskip#3\relax \advance\leftskip\@tempdima\relax
    \rightskip\@pnumwidth plus4em \parfillskip-\@pnumwidth
    #5\leavevmode\hskip-\@tempdima
      \ifcase #1
       \or\or \hskip 1em \or \hskip 2em \else \hskip 3em \fi%
      #6\nobreak\relax
    \hfill\hbox to\@pnumwidth{\@tocpagenum{#7}}\par
    \nobreak
    \endgroup
  \fi}
\begin{document}

\title{Distribution of Eigenvalues of Random Real Symmetric Block Matrices}

\author[Blackwell]{Keller Blackwell}
\email{kellerb@mail.usf.edu}
\address{Department of Mathematics, University of South Florida, Tampa, FL 33620}

\author[Borade]{Neelima Borade}
\email{nborad2@uic.edu}
\address{Department of Mathematics, University of Illinois at Chicago, Chicago, IL 60607}

\author[Devlin VI]{Charles Devlin VI}
\email{chatrick@umich.edu}
\address{Department of Mathematics, University of Michigan, Ann Arbor, MI 48109}

\author[Luntzlara]{Noah Luntzlara}
\email{nluntzla@umich.edu}
\address{Department of Mathematics, University of Michigan, Ann Arbor, MI 48109}

\author[Ma]{Renyuan Ma}
\email{renyuanma01@gmail.com}
\address{Department of Mathematics, Bowdoin College, ME, 04011}

\author[Miller]{Steven J. Miller}
\email{sjm1@williams.edu}
\address{Department of Mathematics, Williams College, MA 01267}

\author[Wang]{Mengxi Wang}
\email{mengxiw@umich.edu}
\address{Department of Mathematics, University of Michigan, Ann Arbor, MI 48109}

\author[Xu]{Wanqiao Xu}
\email{wanqiaox@umich.edu}
\address{Department of Mathematics, University of Michigan, Ann Arbor, MI 48109}

\subjclass[2000]{15A52 (primary), 60F99, 62H10 (secondary). }

\keywords{Random Matrix Theory, Toeplitz Matrices, Distribution of Eigenvalues, Limiting Spectral Measure}

\thanks{This work was supported by NSF grants DMS1561945 and DMS-1659037, Bowdoin College, the University of Michigan, and Williams College; it is a pleasure to thank them for their support. We would also like to thank Shiliang Gao and Dr. Jun Yin for helpful comments, and Zhijie Chen, Jiyoung Kim, and Samuel Murray for providing a counter-example to a conjecture on the contribution of terms in the expansion of formulas for the moments of the disco of two ensembles.}

\maketitle

\begin{abstract} Random Matrix Theory (RMT) has successfully modeled diverse systems, from energy levels of heavy nuclei to zeros of $L$-functions. Many statistics in one can be interpreted in terms of quantities of the other; for example, zeros of $L$-functions correspond to eigenvalues of matrices, and values of $L$-functions to values of the characteristic polynomials. This correspondence has allowed RMT to successfully predict many number theory behaviors; however, there are some operations which to date have no RMT analogue. The motivation of this paper is to try and find an RMT equivalent to Rankin-Selberg convolution, which builds a new $L$-functions from an input pair. We report on one attempt; while it does not model convolution, it does create new matrix families with properties in between those of the constituents.


For definiteness we concentrate on two specific families, the ensemble of palindromic real symmetric Toeplitz (PST) matrices and the ensemble of real symmetric (RS) matrices, whose limiting spectral measures are the Gaussian and semicircle distributions, respectively; these were chosen as they are the two extreme cases in terms of moment calculations. For a PST matrix $A$ and a RS matrix $B$, we construct an ensemble of random real symmetric block matrices whose first row is $\lbrace A, B \rbrace$ and whose second row is $\lbrace B, A \rbrace$. By Markov's Method of Moments, we show this ensemble converges weakly and almost surely to a new, universal distribution with a hybrid of Gaussian and semicircle behaviors. We extend this construction by considering an iterated concatenation of matrices from an arbitrary pair of random real symmetric sub-ensembles with different limiting spectral measures. We prove that finite iterations converge to new, universal distributions with hybrid behavior, and that infinite iterations converge to the limiting spectral measures of the component matrices.
\end{abstract}

\tableofcontents

\section{Introduction}
\subsection{History}

Random Matrix Theory (RMT) is well-suited to the fundamental problem of studying spacings between observed values arising from large, complex systems such as energy levels of heavy nuclei and vertical spacings of zeros of the Riemann zeta function. Similar to the Central Limit Theorem, the behavior of a typical element is often close to the system average, which frequently can be computed.



For example, the intractability of the three-body problem is only exacerbated in the study of heavy nuclei, characterized by the interactions of hundreds of protons and neutrons. The fundamental equation governing such quantum systems is Schr\"{o}dinger's Equation $H \Psi_n = E_n \Psi_n$ where $H$, the \emph{Hamiltonian matrix}, is an infinite dimensional matrix whose entries are computed from the little-understood quantum system. Wigner \cite{Wig1} in 1955 opened a new avenue into the study of heavy nuclei by considering, rather than the true $H$ of the system, a random $N \times N$ real symmetric matrices with entries i.i.d.r.v. from appropriate probability distributions. Average eigenvalue density and spacings can be then computed for any finite $N$, and the eigenvalue behavior of a single typical random matrix converges to the limits of system averages as $N \to \infty$. A key result is Wigner's Semi-Circle Law \cite{Wig2}, which states that the distribution of normalized eigenvalues of a random real symmetric or complex Hermitian matrix with entries i.i.d.r.v. from a fixed probability distribution with mean $0$ and variance $1$ converges to the semi-circle density.

The ensemble of $N \times N$ real symmetric matrices has $N(N+1)/2$ independent parameters; a natural question is how placing additional structural constraints, and thereby reducing the degrees of freedom, affects eigenvalue behavior. Recently the density of eigenvalues of a thin subset of real symmetric matrices was studied.\footnote{There are many other ensembles of matrices one can investigate, yielding new behavior. An extreme example are checkerboard ensembles \cite{BCDHMSTVY,CKLMSW}, where most of the eigenvalues follow the semi-circle law but a fixed number diverge to infinity as the matrix size grows, with a scaled limiting distribution equal to that of hollow standard ensembles. For more choices see the references in these works.} Recall an $N\times N$ symmetric
palindromic Toeplitz matrix $A_N$ is of the form
\be\label{eq:defrsptmat}
A_N\ =\ \left(
\begin{array}{ccccccc}
b_0    &  b_1   &  b_2   &  \cdots  &  b_2   &  b_1  &  b_0  \\
b_1    &  b_0   &  b_1   &  \cdots  &  b_3   &  b_2  &  b_1  \\
b_2    &  b_1   &  b_0   &  \cdots  &  b_4   &  b_3  &  b_2  \\
\vdots & \vdots & \vdots &  \ddots  & \vdots & \vdots &  \vdots  \\
b_2    & b_3    & b_4    &  \cdots  & b_0    & b_1   &  b_2   \\
b_1    & b_2    & b_3    &  \cdots  & b_1    & b_0   &  b_1  \\
b_0    & b_1    & b_2    &  \cdots  & b_2    & b_1   &  b_0
\end{array}\right),
\ee
which is a symmetric Toeplitz matrix whose first row is a palindrome.
Bai \cite{Bai} first posed the problem of studying the limiting eigenvalue
distribution associated with random symmetric (non-palindromic) Toeplitz matrices,
along with Hankel and Markov matrices. Subsequent work by
Bose-Chatterjee-Gangopadhyay \cite{BCG}, Bryc-Dembo-Jiang \cite{BDJ}, and
Hammond-Miller \cite{HM} have independently observed that the limiting
distribution of random symmetric Toeplitz matrices is less than Gaussian.
In particular, \cite{HM} interpreted the deviations from the Gaussian in
terms of obstructions to Diophantine equations. Extending this work, Massey-Miller-Sinsheimer
\cite{MMS} proved that such Diophantine obstructions (and the deviations
they cause) vanish altogether if one considers symmetric palindromic Toeplitz
matrices. The analysis in \cite{MMS}
shows that the moments of the symmetric palindromic Toeplitz ensemble
are those of the standard Gaussian, and that the limiting spectral
measure converges weakly to the same. An $N \times N$ real
symmetric matrix $B$ has $N(N+1)/2$ degrees of freedom; in contrast, a symmetric palindromic Toeplitz matrix of the same dimensions has only $N/2$ degrees of
freedom. The PST ensemble is then a very thin sub-ensemble of all real symmetric matrices,
and the imposed structure leads to new behavior.
Thus by examining sub-ensembles of real symmetric matrices, one has the exciting possibility of
seeing new, universal distributions.

The entrance of random matrix theory into number theory would come two decades later in a fortuitous meeting between Hugh Montgomery and Freeman Dyson, yielding the observation that
the pair correlation function of Riemann zeta zeros matched that of the eigenvalues of random Hermitian matrices in the Gaussian Unitary Ensemble (see \cite{BFMT-B,FM} for a fuller treatment and history).  Work by Hejhal \cite{Hej} and Rudnick and Sarnak \cite{RS} extended this random matrix connection to $n$-level correlations of zeros of $L$-functions, generalizations of the Riemann zeta function which arise throughout number theory. Studying the zero density of an individual $L$-function can be then recast as the study of eigenvalue behavior of random complex Hermitian matrices.

In the study of $L$-functions, Rankin-Selberg convolution allows the creation of a new $L$-function from two input $L$-functions. Given families of $L$-functions $\left \lbrace L (s, f_i)_{f_i \in \mathcal{F}_i}\right\rbrace$ with  $i \in \lbrace 1, 2, \ldots, I \rbrace)$, the Rankin-Selberg convolution
\begin{equation}
\left \lbrace L ( s, f_1 \otimes \cdots \otimes f_I ) \right \rbrace_{ ( f_1, \ldots, f_I) \in \mathcal{F}_1 \times \cdots \times \mathcal{F}_I }
\end{equation}
gives a new family of $L$ functions; for details see \cite{IK}. Due\~nez and Miller \cite{DM1, DM2} were able to describe the behavior of the zeros of the convolution in terms of the behavior of the constituent families in many situations (see also \cite{SST}). As RMT has successfully modeled so many properties of $L$-functions, it is thus natural to ask if there is an RMT analogue of convolutions; trying to find this by combining properties of two families of matrices is the goal of this work.



The work of Goldmakher-Khoury-Miller-Ninsuwan \cite{GKMN} on the limiting eigenvalue distributions of weighted $d$-regular graphs provides one possibility of understanding combined ensemble behavior in terms of component behaviors. Given an adjacency matrix $A$ and a random weight matrix $\mathcal{W}$ populated by i.i.d.r.v. from appropriately bounded distributions, the analysis of \cite{GKMN} studies the limiting spectral measure of the Hadamard product $W \ast \mathcal{W}$ (this is the pointwise product of entries of the two matrices). Ongoing work by the authors of this paper generalizes the Hadamard product as a Kronecker product of two random square matrices from arbitrary ensembles (see \cite{Mor} for some results on the distribution of eigenvalues of Kronecker products).

Motivated by the preceding questions arising from the confluence
of quantum physics, number theory, and random matrix theory,
we consider the eigenvalue behavior of the
ensemble constructed as the ''disco'' concatenation of symmetric
palindromic Toeplitz matrices $A$ and real symmetric matrices $B$:
\begin{equation}\label{eq:1-disco-construction}
    \mathcal{D}_1 \left( A, B \right)\ = \ \begin{bmatrix}
    A & B \\
    B & A
    \end{bmatrix}.
\end{equation}
The whimsical
naming of the ``disco'' construction arises from the entries of the block
matrix ``ABBA'', a quintessential icon of disco music's heyday.
The resulting ensemble of $2N \times 2N$ symmetric block matrices have
only $(N/2) + N(N+1)/2$ degrees of freedom and constitute another thin
subset of all real symmetric matrices that may give rise to new
eigenvalue behavior of interest. The ensemble's construction from
known ensembles (symmetric palindromic Toeplitz and real symmetric)
furthermore poses the question of how the disco ensemble's
limiting eigenvalue distribution may be described in terms
of its constituent distributions.

We chose the PST and RS ensembles as their limiting distributions
(Gaussian and semicircle, respectively) exhibit behavior at polar
extremes. Computing the $2k$\textsuperscript{th} moments of the
Gaussian and semicircle distributions may be reformulated as a
combinatorics problem in which one must pair $2k$ points on the
circumference of a circle with chords possibly subject to additional
constraints. For the Gaussian case no such constraints
are placed, and all possible pairings of points on a circle
contribute equally to the moment in the limit $N \to \infty$. In contrast,
the semicircle case of the real symmetric matrices has equal contribution from all pairings that have no crossings, while pairings with a crossing contribute zero in the
limit $N \to \infty$. Furthermore, the Gaussian distribution
features a sharp decay rate but unbounded support, while the
semicircle distribution is strictly bounded within the interval $[-2,2]$.

\begin{figure}[ht]
    \centering
    \includegraphics[width=0.4\textwidth]{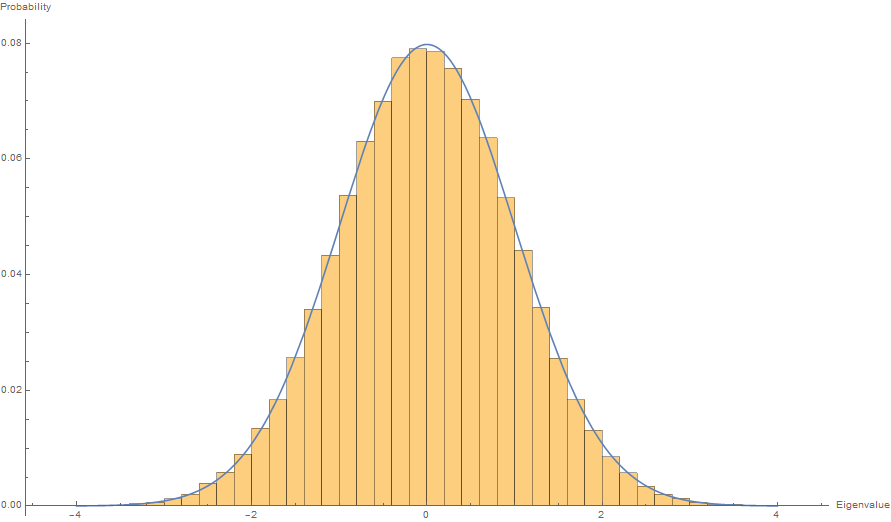}\ \ \  \includegraphics[width=0.4\textwidth]{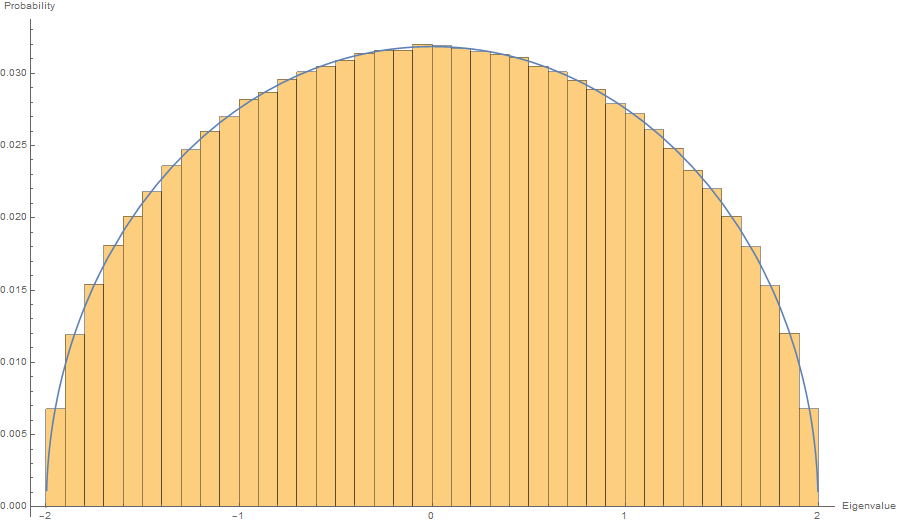}
    \caption{Eigenvalue distribution of $10,000 \times 10,000$ matrices: left is symmetric palindromic Toeplitz (plotted against a Gaussian), right is real symmetric (plotted against a semi-circle).}
    \label{fig:my_label}
\end{figure}

While the resulting ensembles do not appear to model convolution, our motivating question, the construction is of interest in its own right as another way to create ensembles and see how the properties of the constituent components are reflected in the new family. Our analysis shows that the new
construction of \eqref{eq:1-disco-construction}
exhibits hybrid behaviors that bear resemblance to the
limiting distributions of its component matrices, converging to a new
universal distribution distinct from both the Gaussian and semicircle,
while retaining similarities to both. We then extend this construction
in two ways. We consider arbitrary $A$, $B$ drawn from real symmetric
ensembles (with possibly additional structure imposed). We then delve
into the behavior of random block matrices constructed by successively
concatenating $\mathcal{D}_1 (A, B)$ with additional matrices drawn
from the same ensemble as $B$. Our work shows that given any two
random matrix from ensemble $\mathcal{E}_A$ and $\mathcal{E}_B$, one can
construct an infinite number of block matrix ensembles that converge to
any distribution intermediate to that of $\mathcal{E}_A$ and
$\mathcal{E}_B$. An entire spectrum of fascinating hybrid behavior
exists between any two limiting eigenvalue distributions, uncovering
a galaxy of new, universal distributions.


\subsection{Notation}\label{subsec:Notation}

We briefly review the notions of convergence examined in this paper
and define the quantities studied. We let $A$ be a random real symmetric
(with possibly additional structure imposed)
matrix of dimension $N \times N$ chosen from ensemble $\mathcal{E}_A$.
For all $i \in \mathbb{N}$, we let $B_i$ be a random real symmetric
(with possibly additional structure imposed)
matrix
of dimension $2^{i-1} N \times 2^{i-1} N$ drawn from an ensemble
$\mathcal{E}_B$. We then construct $\mathbf{B} = \lbrace B_i \rbrace$
as an infinite sequence of matrices. We assume both the limiting eigenvalue distribution of $\mathcal{E}_A$ and $\mathcal{E}_B$ have all moments
finite, and that the entries of $A$ and the $B_i$'s are drawn from a
fixed probability distribution $p(x)$ with mean 0 and variance 1.
We now define the $d$-Disco of $A$ and $\mathbf{B}$, denoted
$\mathcal{D}_d (A, \mathbf{B})$, as the following.

\begin{defi}
For $d \in \mathbb{Z}^+$, the $d$-Disco of $A$ and $\mathbf{B}$,
denoted $\mathcal{D}_d (A, \mathbf{B} )$, is given by
\begin{equation}
    \mathcal{D}_d (A, \mathbf{B} ) = \left[ \begin{array}{ccc}
        \begin{array}{cc}
         \begin{array}{cc}
            A & B_1 \\
            B_1 & A
        \end{array} & \text{\LARGE $B_2$}\\
        \text{\LARGE $B_2$} & \begin{array}{cc}
            A & B_1 \\
            B_1 & A
        \end{array} \end{array} & \cdots & \text{\fontsize{45}{0} $B_d$} \\
        \vdots & \ddots & \vdots \\
        \text{\fontsize{45}{0} $B_d$} & \cdots &  \begin{array}{cc}
         \begin{array}{cc}
            A & B_1 \\
            B_1 & A
        \end{array} & \text{\LARGE $B_2$}\\
        \text{\LARGE $B_2$} & \begin{array}{cc}
            A & B_1 \\
            B_1 & A
        \end{array} \end{array}
    \end{array} \right].
\end{equation}
\end{defi}
Observe that \eqref{eq:1-disco-construction} is a specific instance of the
preceding construction.

For each integer $2^d N$ let $\Omega_{2^d N}$ denote the set of
$2^d N \times 2^d N$
real symmetric matrices $\mathcal{D}_d = \mathcal{D}_d (A, \mathbf{B})$.
We construct a probability space $(\Omega_{2^d N},\f_N,\p_{2^d N})$ by setting
\begin{align}
\begin{split}
    &\p_{2^d N}\left(\left\{\mathcal{D}_d\in\Omega_{2^d N}: s_{i,j}(\mathcal{D}_d) \in [\ga_i, \gb_i]\
\right\}\right)\\
    &\hspace{2cm} = \left( \prod_{i=1}^{R_A} \int_{x_i=\ga_i}^{\gb_i} p(x_i)\; dx_i \right) \left( \prod_{k=1}^d \prod_{j=1}^{R_B} \int_{x_j=\ga_j}^{\gb_j} p(x_j)\; dx_j \right)
\end{split}
\end{align}
where each $dx_i$, $dx_j$ is the Lebesgue measure and $R_A$, $R_B$ are the degrees of
freedom in $A$ and the $B_i$'s, respectively. To each $\mathcal{D}_d\in\Omega_{2^d N}$
we attach a spacing measure by placing a point  mass of size $1/ 2^d N$ at
each normalized eigenvalue\footnote{From the eigenvalue trace lemma
($\Tr(\mathcal{D}_d^2) = \sum_i \gl_i^2( \mathcal{D}_d)$) and the Central
Limit Theorem, we see that the eigenvalues of $\mathcal{D}_1$ are of order
$\sqrt{2^d N}$. Since $\Tr(\mathcal{D}_1^2) = \sum_{i,j=1}^{2^d N}
s_{i,j}^2$ and each $s_{i,j}$ is drawn from a mean $0$, variance $1$ distribution,
$\Tr(\mathcal{D}_1^2)$ is of size $(2^d N)^2$, suggesting the appropriate scale
for normalizing the eigenvalues is to divide each by $\sqrt{2^d N}$.} $\gl_i(\mathcal{D}_d)$: \begin{equation}\label{eq:MkANl}
\mu_{\mathcal{D}_d}(x)dx \ = \ \frac{1}{2^d N} \sum_{i=1}^{2^d N} \delta\left( x -
\frac{\gl_i(\mathcal{D}_d)}{\sqrt{2^d N}} \right)dx,
\end{equation}
where $\delta(x)$ is the standard Dirac delta function. We call $\mu_{\mathcal{D}_d}$
the \emph{normalized spectral measure} associated to $\mathcal{D}_d$.

\begin{defi}[Normalized empirical spectral
distribution]\label{defi:nesd} Let $\mathcal{D}_d$ be an $2^d N \times 2^d N$
real symmetric matrix with eigenvalues $\lambda_{2^d N} \geq \cdots \geq
\lambda_1$. The normalized empirical spectral distribution (the empirical
distribution of normalized eigenvalues) $F^{\mathcal{D}_d /\sqrt{2^d N}}$ is
defined by
\begin{equation}
F^{\mathcal{D}_d /\sqrt{2^d N}}(x) \ =\ \frac{\#\{i \le 2^d N:
\lambda_i/\sqrt{2^d N} \le x\}}{2^d N}.
\end{equation}
\end{defi}

As $F^{ \mathcal{D}_d / \sqrt{2^d N}}(x) = \int_{-\infty}^x \mu_{\mathcal{D}_d}(t)dt$,
we see that $F^{\mathcal{D}_d /\sqrt{2^d N}}$ is the cumulative distribution function
associated to the measure $\mu_{\mathcal{D}_d}$. Our main tool to understand the
$F^{\mathcal{D}_d/\sqrt{2^d N}}$ is the Moment Convergence Theorem
(see \cite{Ta} for example).

\begin{thm}[Moment Convergence Theorem]\label{thm:momct} Let $\{F_N(x)\}$ be a
sequence of distribution functions such that the moments \be M_{m;N} = \int_{-\infty}^\infty x^m dF_N(x) \ee exist for all $m$. Let
$\Phi$ be the distribution function of the standard normal (whose
$m$\textsuperscript{{\rm th}} moment is $M_m$). If
$\lim_{N\to\infty} M_{m,N} = M_m$ then $\lim_{N\to\infty} F_N(x) =
\Phi(x)$. \end{thm}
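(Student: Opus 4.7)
The plan is to prove this by the classical Fr\'echet--Shohat three-step argument: establish tightness, extract subsequential weak limits and match their moments to $M_m$, and then invoke moment-determinacy of the standard normal to identify every such limit with $\Phi$.

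First, I would show tightness of $\{F_N\}$. Since $M_{2;N}\to M_2=1$, the second moments are uniformly bounded by some constant $C$, and Chebyshev's inequality gives
\[ 1 - F_N(R) + F_N(-R) \ \leq \ \frac{M_{2;N}}{R^2} \ \leq \ \frac{C}{R^2}, \]
which tends to zero uniformly in $N$ as $R\to\infty$. By Helly's selection theorem, every subsequence of $\{F_N\}$ then admits a further subsequence $\{F_{N_k}\}$ converging weakly (at continuity points) to some distribution function $F$; tightness ensures $F$ is an honest probability distribution rather than a sub-probability measure.

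Second, the hypothesis $M_{m+2;N}\to M_{m+2}<\infty$ yields the uniform tail bound
\[ \int_{|x|>R} |x|^m\, dF_N(x) \ \leq \ \frac{M_{m+2;N}}{R^2}, \]
so $|x|^m$ is uniformly integrable in $N$ for each fixed $m$. Combined with the weak convergence $F_{N_k}\Rightarrow F$, one may pass to the limit under the integral to obtain $\int x^m\, dF(x) = \lim_k M_{m;N_k} = M_m$ for every $m\geq 0$.

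The main obstacle is the determinacy step: showing that the only distribution with moment sequence $\{M_m\}$ is $\Phi$ itself, since in general the Hamburger moment problem can have multiple solutions. Here I would verify Carleman's condition. The even moments of $\Phi$ are $M_{2m}=(2m)!/(2^m m!)$, and Stirling gives $M_{2m}^{1/(2m)}=\Theta(\sqrt{m})$, so $\sum_{m\geq 1} M_{2m}^{-1/(2m)}$ diverges. Carleman's theorem then forces uniqueness, hence $F=\Phi$. Finally, since every subsequence of $\{F_N\}$ has a sub-subsequence converging weakly to the same limit $\Phi$, a standard subsequence argument shows the full sequence converges weakly to $\Phi$, and continuity of $\Phi$ upgrades this to pointwise convergence $F_N(x)\to\Phi(x)$ for every $x\in\R$.
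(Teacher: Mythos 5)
Your proof is correct in substance, but note that the paper does not actually prove this theorem: it is quoted as a standard result with a citation to Takacs \cite{Ta}, so there is no in-paper argument to compare against. Your Fr\'echet--Shohat route (tightness via Chebyshev, Helly selection, uniform integrability to pass moments to the limit, Carleman's condition to certify that the Gaussian is moment-determinate, and the subsequence principle plus continuity of $\Phi$ to get pointwise convergence) is exactly the classical proof of this statement, and every step is sound.

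One small technical point worth tightening: in the uniform integrability step you bound $\int_{|x|>R}|x|^m\,dF_N(x)$ by $M_{m+2;N}/R^2$, but $M_{m+2;N}=\int x^{m+2}\,dF_N(x)$ is a signed moment and need not dominate $\int_{|x|>R}|x|^{m+2}\,dF_N(x)$ when $m$ is odd. The fix is immediate: use an even moment, e.g.\ $\int_{|x|>R}|x|^m\,dF_N(x)\leq M_{2m+2;N}/R^{m+2}$, which is uniformly bounded in $N$ by hypothesis and gives the same conclusion. With that adjustment the argument is complete.
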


\begin{defi}[Limiting spectral distribution]\label{defi:lsd}
If as $N\to\infty$ we have $F^{\mathcal{D}_d /\sqrt{2^d N}}$ converges in some
sense (for example, weakly) to a distribution $F$, then we say $F$ is the limiting
spectral distribution of the ensemble.
\end{defi}

The analysis proceeds by examining the convergence of the moments; to aid
in this analysis we separate $\mathcal{D}_d (A,\mathbf{B})$ into the sum
of random block matrices. We choose $B_0$ to be a random
$N \times N$ matrix from ensemble $\mathcal{E}_B$, and derive a random real symmetric matrix $C$ dependent on $A$ and $B_0$, given by
\begin{equation}\label{eq:dependent_matrix_cheat}
    C = A - B_0
\end{equation}
and construct
\begin{equation}\label{eq:curlyC_construction}
    \mathcal{C}_d = \begin{bmatrix}
    C & & \\
    & \ddots & \\
    & & C
    \end{bmatrix}
\end{equation}
with $d$ copies of $C$ placed along the diagonal. We then define
\begin{align}\label{eq:curlyB}
    \mathcal{B}_d &= \mathcal{D}_d (A, \mathbf{B}) - \mathcal{C}_d \nonumber \\
    &= \left[\begin{array}{ccc}
    \begin{array}{cc}
         \begin{array}{cc}
            B_0 & B_1 \\
            B_1 & B_0
        \end{array} & \text{\LARGE $B_2$}\\
        \text{\LARGE $B_2$} & \begin{array}{cc}
           B_0 & B_1 \\
            B_1 & B_0
        \end{array} \end{array} & \text{\resizebox{1.4cm}{!}{\itshape $B_3$}} & \cdots \\
        \text{\resizebox{1.4cm}{!}{\itshape $B_3$}} &  \begin{array}{cc}\begin{array}{cc}
            B_0 & B_1 \\
            B_1 & B_0
        \end{array} & \text{\LARGE $B_2$}\\
        \text{\Large $B_2$} & \begin{array}{cc}
            B_0 & B_1 \\
            B_1 & B_0
        \end{array} \end{array} & \cdots \\
        \vdots & \vdots & \ddots
    \end{array}\right]
\end{align}
so that $\mathcal{D}_d = \mathcal{B}_d + \mathcal{C}_d$.


\subsection{Main Results}\label{subsec:Main_Results}

By analyzing the moments of the $\mu_{\mathcal{D}_d}(x)$, we obtain results on
the convergence of $F^{\mathcal{D}_d /\sqrt{2^d N}}$ to a new, universal
distribution for each finite $d$ and to the distribution of $\mathcal{E}_B$
in the limit as $d \to \infty$.

The $m$\textsuperscript{th} moment of $\mu_{\mathcal{D}_d}(x)$ is
\begin{equation}
M_m(\mathcal{D}_d, A, \mathbf{B}, N)  =  \int_{-\infty}^\infty x^m
\mu_{\mathcal{D}_d}(x)dx =  \frac{1}{(2^d N)^{\frac{m}{2}+1}} \sum_{i=1}^{2^d N}
\gl_i^m(\mathcal{D}_d).
\end{equation}

\begin{defi}\label{defi:MmN}
Let $M_{m}(\mathcal{D}_d,N)$ be the average of $M_m(\mathcal{D}_d, A,
\mathbf{B}, N)$ over the ensemble, with each $\mathcal{D}_d$ weighted
by its distribution. Set $M_m(\mathcal{D}_d) = \lim_{N\to\infty} M_{m}
(\mathcal{D}_d, N)$. We call $M_m(\mathcal{D}_d)$ the average $m^{\rm th}$
moment.
\end{defi}

In Theorems \ref{thm:weak_upper_bound_even_moments} and
\ref{thm:weak_lower_bound_even_moments} we prove for the special case of
$d=1$ and $A$, $B$ being $N \times N$ symmetric palindromic Toeplitz matrices
and real symmetric matrices, respectively, that $M_m (\mathcal{D}_1 (A, B), N)$
converges to moments bounded above by the Gaussian's and below by the
semicircle's. We show in Section \ref{sec:combo_prob}
that computation of $M_m (\mathcal{D}_1 (A, B), N)$ may
be reformulated as a combinatorics problem of independent interest - namely,
counting the number of ways points on a circle may be paired when subjected
to restrictions on chord intersections. In Section \ref{subsec:weak_convergence}
we show that the
limiting spectral measure of $\mathcal{D}_1 (A, B)$ converges weakly to
a new universal distribution, and obtain a stronger result in Section
\ref{subsec:almost_sure_convergence} by proving almost sure
convergence. In Section \ref{sec:bounds_mixed_products} we use the $p$-Schatten norm and prove
a generalization H\"older's Inequality to bound the contribution of arbitrary
Hermitian matrix products. In Section \ref{sec:bounds_on_moments} we apply this bound to the special case of $d=1$
with arbitrary Hermitian $A$, $B$,
and bound its moments in terms of moments of component matrices.

In Section \ref{sec:LW_disco} we show that when taking the $d$-Disco of matrices from ensembles
with the same limiting spectral measure that the spectral measure of the
resulting matrix converges weakly to that of the original ensembles.
This result allows us to consider
both finite $d \in \mathbb{Z}^+$ and the limit as $d \to \infty$ for $A$,
$\mathbf{B}$ drawn from a pair of arbitrary real symmetric
(with possibly additional
structure) ensembles and prove that $M_m (\mathcal{D}_d, N)$ converges weakly
to the
moments of a new, universal distribution for $d$ finite and to the moments of
the $\mathbf{B}$ ensemble in the limit as $d \to \infty$. Once we show this,
then the same techniques used in \cite{HM} allow us to conclude the following.

\begin{thm}\label{thm:main}
For finite $d \in \mathbb{Z}^+$, the limiting spectral distribution of
$\mathcal{D}_d(A, \mathbf{B})$ whose independent entries are independently
chosen from a probability distribution $p$ with mean $0$, variance $1$ and
finite higher moments converges weakly to a new, universal distribution
independent of $p$. As $d \to \infty$, the limiting spectral distribution
of $\mathcal{D}_d(A, \mathbf{B})$ converges weakly to that of the $\mathbf{B}$
ensemble.
\end{thm}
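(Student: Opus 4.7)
The plan is to apply the Moment Convergence Theorem (Theorem \ref{thm:momct}): for each fixed $m$ and $d$, show that $M_m(\mathcal{D}_d, N)$ converges as $N \to \infty$ to a universal constant $M_m(\mathcal{D}_d)$ depending only on the variance of the underlying distribution $p$, and then verify Carleman's condition so these moments determine a unique limiting distribution. First I would expand via the eigenvalue trace lemma,
\[
M_m(\mathcal{D}_d, N) \ =\ \frac{1}{(2^d N)^{m/2+1}} \sum_{i_1, \ldots, i_m} \mathbb{E}\bigl[ s_{i_1 i_2}(\mathcal{D}_d)\, s_{i_2 i_3}(\mathcal{D}_d) \cdots s_{i_m i_1}(\mathcal{D}_d) \bigr],
\]
and partition the closed walks $(i_1, \ldots, i_m)$ according to which $N \times N$ sub-block each step traverses. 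The block pattern of $\mathcal{D}_d$ labels each step as belonging to $A$ or to exactly one of $B_0, B_1, \ldots, B_d$; by independence across these $d+2$ matrices together with the mean-zero hypothesis, only walks whose entries group into identical pairs with matching ensemble-labels can survive in the limit $N \to \infty$.

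Second, I would invoke the splitting $\mathcal{D}_d = \mathcal{B}_d + \mathcal{C}_d$ from \eqref{eq:curlyB}. Expanding $\mathrm{Tr}(\mathcal{D}_d^m)$ via the binomial theorem yields $2^m$ mixed-product traces, and the $p$-Schatten norm and generalized H\"older bounds from Section \ref{sec:bounds_mixed_products} control the contribution of any trace that mixes $\mathcal{B}_d$ and $\mathcal{C}_d$ factors, isolating a tractable leading term. Viewing $\mathcal{D}_d$ as a $2 \times 2$ block matrix with $\mathcal{D}_{d-1}$ copies on the diagonal and $B_d$ on the off-diagonal, the leading moment decomposes into a contribution from the inner disco structure (analyzed inductively, with base case $d = 1$ established in Sections \ref{sec:combo_prob}--\ref{subsec:almost_sure_convergence}) and a contribution from the outer $B_d$ pairings. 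Universality in $p$ drops out because only the second moment of $p$ enters the surviving pairings; higher moments appear only in degenerate walks that are suppressed by positive powers of $N^{-1}$.

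Third, for the limit $d \to \infty$, I would appeal to the key input from Section \ref{sec:LW_disco}: when the diagonal and off-diagonal blocks share a common limiting spectral measure the disco moments equal the common moments. For general $\mathcal{E}_A$ and $\mathcal{E}_B$, the fraction of entries of $\mathcal{D}_d$ drawn from $\mathcal{E}_A$ scales as $2^{-d}$ (there are $2^d$ diagonal $N \times N$ $A$-blocks inside a matrix of $4^d N^2$ entries), so any surviving pairing that touches an $A$-entry contributes at most $O(2^{-d})$ relative to pairings supported entirely within the $B_i$ blocks, and the latter reproduce the moments of $\mathcal{E}_B$. Carleman's condition follows from moment-growth bounds inherited from the constituent ensembles, which have all moments finite by hypothesis. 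The passage from averaged moments to the almost sure / weak convergence of the empirical spectral distribution of a typical $\mathcal{D}_d$ is then handled exactly as in \cite{HM}, via a variance bound $\mathrm{Var}\bigl(M_m(\mathcal{D}_d, N)\bigr) = O(1/N)$ together with Chebyshev's inequality and Borel--Cantelli.

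The main obstacle will be the recursive combinatorial bookkeeping at finite $d$: chord pairings on the moment cycle must simultaneously respect the block pattern at every scale $1, 2, \ldots, d$, and one must prove that the surviving count is independent of $p$ and defines a genuinely new distribution strictly interpolating between the $d=1$ hybrid and the pure $\mathcal{E}_B$ distribution. In particular, any structural Diophantine obstructions inherited from an $\mathcal{E}_A$ (such as the PST constraints identified in \cite{HM,MMS}) must be reconciled with the non-crossing (semicircle-type) constraints inherited from an $\mathcal{E}_B$ like the real symmetric ensemble, and the proof must show these restrictions decouple cleanly by scale so that the recursion in $d$ closes and produces moments that are simultaneously universal, consistent across $d$, and converge to the moments of $\mathcal{E}_B$ as $d \to \infty$.
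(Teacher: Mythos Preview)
Your proposal is correct and follows essentially the same approach as the paper: the method of moments via the Eigenvalue Trace Lemma, the decomposition $\mathcal{D}_d = \mathcal{B}_d + \mathcal{C}_d$, the H\"older/Schatten bounds of Section~\ref{sec:bounds_mixed_products} to control mixed products, the result of Section~\ref{sec:LW_disco} to identify the moments of $\mathcal{B}_d$ with those of $\mathcal{E}_B$, and the variance estimate \`a la \cite{HM} for weak convergence. The one place where you are over-anticipating difficulty is your final paragraph: the paper does not carry out scale-by-scale combinatorial bookkeeping but instead observes that $\mathcal{C}_d$ is block-diagonal with $2^d$ copies of $C$, so $\Tr(\mathcal{C}_d^k) = 2^d \Tr(C^k)$ contributes $O(2^{-dk/2})$ after normalization, and the mixed terms are then dispatched directly by Theorem~\ref{thm:expected_val_trace_bound} without any recursive pairing analysis.
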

We sketch the proof, which relies on Markov's method of moments,
which is well suited to random matrix theory problems and many
questions in probabilistic number theory (see \cite{Ell}).
By the Eigenvalue Trace Lemma,
\begin{equation}
    \sum_{i=1}^{2^d N} \lambda_i^m \left( \mathcal{D}_d \right) \ = \
    \Tr\left( \mathcal{D}_d^m \right),
\end{equation}
which applied to the ensemble of $\mathcal{D}_d$ matrices yields
\begin{align}
    M_{m} (\mathcal{D}_d, N) &= \frac{1}{\left( 2^d N \right)^{\frac{m}{2}+1} }
    \mathbb{E} \left[ \Tr\left( \mathcal{D}_d^m \right) \right] \nonumber \\
    &=  \frac{1}{\left( 2^d N \right)^{\frac{m}{2}+1} } \mathbb{E} \left[ \Tr
    \left( \left( \mathcal{B}_d + \mathcal{C}_d \right)^m \right) \right] \nonumber \\
    &= \frac{1}{\left( 2^d N \right)^{\frac{m}{2}+1} }  \sum_{\ell = 0}^m \;
    \sum_{\substack{i_1 + \cdots i_p = m - \ell \\ j_1 + \cdots + j_p = \ell}}
    \mathbb{E} \left[ \Tr\left( \prod_{i=1}^p \mathcal{B}_d^{i_p}
    \mathcal{C}_d^{j_p} \right) \right]
\end{align}
where by $\E[ \cdots ]$ we mean averaging over the $2^d N\times 2^d N$
$\mathcal{D}_d(A, \mathbf{B})$ ensemble with each matrix $\mathcal{D}_d$
weighted by its probability of occurring. Expansion of the product
$\left( \mathcal{B}_d + \mathcal{C}_d \right)^m$ yields a non-commutative,
bivariate matrix polynomial; the chief obstacle becomes determining in the
limit as $N \to \infty$ the contribution of terms with general form
\begin{equation}
    \mathbb{E}\left[ \Tr\left( \mathcal{B}_d^{i_1} \mathcal{C}_d^{j_1}
    \cdots \mathcal{B}_d^{i_p} \mathcal{D}_d^{j_p} \right) \right], \quad
    \text{where}\quad \sum_{k=1}^p \left( i_k + j_k \right) = m.
\end{equation}
Weak convergence for the case $d=1$ follows from
\begin{equation}
\lim_{N \to \infty} \left(\E \left[ M_m( \mathcal{D}_1 )^2 \right] - \E
\left[ M_m( \mathcal{D}_1 ) \right]^2 \right) \ = \ 0
\end{equation}
and applying Chebyshev's inequality and the Moment Convergence Theorem.
We then establish convergence for $d \in \mathbb{Z}^+$ by inducting on
the parameter $d$.

We conclude in Section $8$ by investigating the spacings between normalized eigenvalues of $\mathcal{D}_d(A,\mathbf{B})$ constructed from $A$ a symmetric palindromic Toeplitz matrix and $\mathbf{B}$ a sequence of real symmetric matrices, and posing conjectural bounds on moments of $\mathcal{D}_1(A,B)$ with $A,B$ drawn from ensembles with different limiting spectral measure.


\section{1-Disco of PST and RS Matrices}

The 1-Disco of a symmetric palindromic Toeplitz matrix and a real symmetric
matrix highlights the challenge of analyzing the concatenation of matrices
from ensembles with different limiting spectral distributions. For sake of
completeness, we restate the construction of the $d$-Disco with $d=1$.
\begin{defi}\label{def: 1-disco}
For two $N \times N$ real matrices $A$ and $B$, write
$\mathcal{D}_1 = \mathcal{D}_1(A, \mathbf{B})$ for the $2N \times 2N$ block
matrix
\begin{equation}
    \mathcal{D}_1 = \left[
        \begin{array}{cc}
             A & B \\
             B & A
        \end{array} \right].
\end{equation}
\end{defi}
Let $p$ be a probability distribution with mean $0$, variance $1$, and
finite moments of all orders. Let $A = [a_{i,j}]$ denote an $N \times N$
symmetric palindromic Toeplitz (PST) random matrix whose entries are
i.i.d.r.v. with probability distribution $p$, and $B = [b_{i,j}]$ denote
an $N \times N$ symmetric random matrix whose entries are i.i.d.r.v. with
probability distribution $p$.


\subsection{Determination of the Moments \texorpdfstring{$M_k \left( \mathcal{D}_1 \right)$}{Lg}}

We wish to study the limiting behavior of the $k^{\text{th}}$ moment
$M_k(\mathcal{D}_1,N)$ of the distribution of normalized eigenvalues
of $\mathcal{D}_1$ as $N \to \infty$. Observe that we may diagonalize
$\mathcal{D}_1$ in the following manner:
\begin{align}
    \mathbb{E}[\Tr(\mathcal{D}_1^k)] \ &=\ \mathbb{E}\left[\Tr
    \begin{bmatrix} A & B \\ B & A \end{bmatrix}^k\right] \nonumber \\
    \ &=\ \mathbb{E}\left[\Tr\begin{bmatrix} I/2 & I/2 \\
    I/2 & -I/2 \end{bmatrix} \begin{bmatrix} (A+B)^k & 0 \\
    0 & (A-B)^k \end{bmatrix} \begin{bmatrix} I & I \\
    I & -I \end{bmatrix}\right] \nonumber\\
    \ &= \ \mathbb{E}\left[\Tr\begin{bmatrix} (A+B)^k & 0 \\
    0 & (A-B)^k \end{bmatrix}\right].\label{eq:1-disco_decomp}
\end{align}
Recalling the Eigenvalue Trace Lemma and applying the preceding
diagonalization yields
\begin{align}
    M_k(\mathcal{D}_1,N) \ &= \ \frac{1}{(2N)^{\frac{k}{2} + 1}}
    \mathbb{E}[\Tr(\mathcal{D}_1^k)] \nonumber \\
    \ &= \ \frac{1}{(2N)^{\frac{k}{2} + 1}} \mathbb{E}[\Tr\left((A+B)^k
    +(A-B)^k\right)] \label{eq:1-disco-diagonalized}  \\
    \ &= \ \frac{2}{(2N)^{\frac{k}{2} + 1}} \sum_{\substack{l=0\\
    l \text{ even}}}^k \sum_{\substack{I_1 + \cdots + I_p = k-l\\
    J_1 + \cdots + J_p = l}} \mathbb{E}[\Tr(A^{I_1} B^{J_1} A^{I_2}
    B^{J_2} \cdots )] \label{eq1.1}  \\
    \begin{split}
        \ &= \ \frac{2}{(2N)^{\frac{k}{2} + 1}} \sum_{\substack{l=0\\
        l \text{ even}}}^k \sum_{\substack{I_1 + \cdots + I_p = k-l\\
        J_1 + \cdots + J_p = l }}\\
        &\qquad \sum_{1 \leq i_1, \ldots, i_k \leq 2N} \mathbb{E}
        [a_{i_1, i_2} a_{i_2, i_3} \cdots a_{i_{I_1}, i_{I_1 + 1}}
        b_{i_{I_1 + 1}, i_{I_1 + 2}} \cdots b_{i_k, i_1}]. \label{eq1.2}
    \end{split}
\end{align}

Consider an arbitrary term $\mathbb{E}[a_{i_1, i_2} \cdots a_{i_{I_1},
i_{I_1+1}} b_{i_{I_1 + 1}, i_{I_1 + 2}} \cdots b_{i_k, i_1}]$. As the
expected value of a product of independent random variables is the
product of their expected values, if one of the $a$'s occurs exactly
once in the term, then since $p$ has mean $0$, the entire term vanishes.
A similar principle applies to the $b$'s, so the only nonzero terms of
$\eqref{eq1.2}$ are those in which each of the $a$'s and $b$'s occur at
least twice in the product.

\subsubsection{Second Moment and Odd Moments}

\begin{lem}\label{oddmoments}
Assume $p$ has mean zero, variance one and finite higher moments.
Then $M_2 \left( \mathcal{D}_1 \right) = 1$, and for all odd
$k$, $M_{k} \left( \mathcal{D}_1 \right) = \lim_{N \to \infty}
M_{k} \left( \mathcal{D}_1, N \right) = 0$.
\begin{proof}
From equation \eqref{eq1.2} we have
\begin{align}
    M_2 \left( \mathcal{D}_1 \right) \ &= \ \lim_{N \to \infty}
    M_2 \left( \mathcal{D}_1, N \right) \nonumber\\
    \ &= \ \lim_{N \to \infty} \frac{2}{(2N)^2}\left(\mathbb{E}
    [\Tr(A^2)] + \mathbb{E}[\Tr(B^2)]\right) \nonumber\\
    \ &= \ \frac{1}{2} \left(M_2(A) + M_2(B)\right).
\end{align}
We know from \cite{MMS} and \cite{Wig2} that $M_2(A) = 1$ and $M_2(B) = 1$,
respectively. It follows immediately that that $M_2(\mathcal{D}_1) = (1 + 1) / 2 = 1$.

For odd $k$ we adopt a similar argument to that used in Lemma $2.3$ of
\cite{MMS}. Assume $k = 2r + 1$ is odd. In each nonzero term of
\eqref{eq1.2}, one of the $a$'s or $b$'s occurs with multiplicity at least
$3$, and as established above each of the $a$'s and $b$'s must occur with
multiplicity at least two. A non-zero term of \eqref{eq1.2} is then completely
determined by first specifying a diagonal for each grouping of $a$'s and
each grouping of $b$'s (making at most $r$ choices), then choosing the index $i_1$.
Such designations force the values of all subsequent indices (up to a fixed
number of choices), so there are at most $O(N^{r+1})$ choices.

Let $n_a$ and $m_b$ be the multiplicity of a grouping of $a$ and $b$,
respectively. A given term contributes
\begin{equation}
\left(\prod_{\text{groupings of } a\text{'s}} \mathbb{E}[a^{n_a}] \right)
\left(\prod_{\text{groupings of } b\text{'s}} \mathbb{E}[b^{m_b}] \right)
\ = \ O(1) \end{equation}
since all moments of $p$ are finite. Thus, \eqref{eq1.2} implies that
$M_k(\mathcal{D}_1,N) = O(1 / \sqrt{N})$; hence
$M_k(\mathcal{D}_1) = 0$ as claimed.
\end{proof}
\end{lem}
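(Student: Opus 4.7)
The plan is to reduce both parts of the lemma to moment calculations on the simpler matrices $A+B$ and $A-B$ via the diagonalization in \eqref{eq:1-disco-diagonalized}, and then handle odd moments by a standard moment-method counting argument.

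For the second moment, I would expand $(A+B)^2 + (A-B)^2 = 2A^2 + 2B^2$, so the cross terms cancel and \eqref{eq:1-disco-diagonalized} collapses to a weighted average of $\E[\Tr(A^2)]$ and $\E[\Tr(B^2)]$. Tracking normalization factors yields $M_2(\mathcal{D}_1) = \tfrac{1}{2}(M_2(A) + M_2(B))$; since both component second moments equal $1$ by the results of Massey-Miller-Sinsheimer for PST matrices and Wigner for RS matrices, we obtain $M_2(\mathcal{D}_1) = 1$.

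For odd $k = 2r+1$, I would work from the index expansion \eqref{eq1.2}. Each summand factors by independence into products of $a$-moments and $b$-moments. Since $p$ has mean zero, any entry class (identifying $a$-entries by their diagonal under the PST palindrome structure, and $b$-entries by the symmetric pair $\{(i,j),(j,i)\}$) that appears with multiplicity one forces the term to vanish. Hence nonzero terms require every entry class to appear with multiplicity at least two; since the total number of factors is odd, at least one class must have multiplicity $\geq 3$, so the number of distinct classes is at most $r$. Bounding choices of the index tuple $(i_1,\dots,i_k)$: after fixing $i_1$, each distinct entry class constrains the relation between a pair of consecutive indices, leaving at most $r$ free choices, for $O(N^{r+1})$ configurations. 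Each factor $\E[a^{n}]$ or $\E[b^{m}]$ is $O(1)$ because $p$ has all moments finite. Dividing by the normalization $(2N)^{k/2+1} = (2N)^{r + 3/2}$ gives $M_k(\mathcal{D}_1, N) = O(N^{-1/2}) \to 0$.

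The main obstacle is making the counting rigorous in the presence of the palindromic Toeplitz constraints, since an entry $a_{i,j}$ depends only on $|i-j|$ (with an additional palindromic identification), so a single $a$-class can correspond to many index pairs and potentially enlarges the count of valid configurations. I would handle this by first choosing the diagonal of each $a$-class (at most $r$ choices, each contributing a factor of $O(N)$), and then showing that the pairwise identifications among the remaining $b$-classes and $a$-repeats pin down subsequent indices up to a bounded number of alternatives, recovering the $O(N^{r+1})$ bound. Because we only need an upper bound to conclude vanishing, the bookkeeping need not be tight, so the argument is robust to the exact way PST and RS classes interact.
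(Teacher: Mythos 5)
Your proposal is correct and follows essentially the same route as the paper: the second moment is computed by cancelling the cross terms in $(A+B)^2+(A-B)^2$ and citing $M_2(A)=M_2(B)=1$, and the odd moments are handled by the same degree-of-freedom count (all entry classes at least paired, one class tripled, hence at most $r$ free diagonals plus the choice of $i_1$, giving $O(N^{r+1})$ terms against the normalization $(2N)^{r+3/2}$, i.e., $O(N^{-1/2})$). Your extra care about the palindromic identifications only enlarging each $a$-class by a bounded factor is exactly the point the paper implicitly relies on, so nothing is missing.
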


\subsubsection{Even Moments}

We calculate the even moments $M_{2k}(\mathcal{D}_1)$. By \eqref{eq1.2}, $M_{2k}(\mathcal{D}_1,N)$ can be expanded as
\begin{equation}\label{expression: even moment of 1-disco}
    \frac{2}{(2N)^{k + 1}} \sum_{\stackrel{l=0}{l \text{ even}}}^{2k}
    \sum_{\stackrel{I_1 + \cdots + I_p = 2k-l}{J_1 + \cdots + J_p = l}}
    \sum_{1 \leq i_1, \ldots, i_{2k} \leq 2N} \mathbb{E}[a_{i_1, i_2}
    a_{i_2, i_3} \cdots a_{i_{I_1}, i_{I_1 + 1}} b_{i_{I_1 + 1}, i_{I_1 + 2}}
    \cdots b_{i_{2k}, i_1}].
\end{equation}
If any $a$'s or $b$'s occur to the first power, the expected value is
zero as each $a$, $b$ is drawn from a mean 0 distribution; hence, all
$a$'s and $b$'s must be at least paired. If on the other hand any $a$'s
or $b$'s occur to a third or higher power, there are then fewer than
$k+1$ degrees of freedom, and there will be no contribution in the limit.
Since $a$'s and $b$'s are entries of matrices
from different ensembles, they exhibit different matching behaviors.

If $a_{i_m.i_{m+1}} = a_{i_n, i_{n+1}}$, there are three possibilities:
\begin{align}\label{eq:MMS_system}
    i_{m+1}-i_m \ &=\ \pm(i_{n+1}-i_n) \nonumber\\
    i_{m+1}-i_m \ &=\ \pm(i_{n+1}-i_n)+(N-1)\\
    i_{m+1}-i_m \ &=\ \pm(i_{n+1}-i_n)-(N-1). \nonumber
\end{align}

If $b_{i_m, i_{m+1}} = b_{ i_n, i_{n+1}}$, we must have:
\begin{equation}
    i_{m+1}-i_m \ = \ \pm(i_{n+1}-i_n).
\end{equation}

Thus the equations in \eqref{eq:MMS_system} can be written more concisely
by considering a choice of
$C_t \in \{0, N-1, -(N-1)\}$ (where $t$ is a function of $i_m$, $i_{m+1}$,
$i_n$, $i_{n+1}$) such that
\begin{equation}\label{eq1.6}
    i_{m+1}-i_m \ = \ \pm(i_{n+1}-i_n)+C_t.
\end{equation}

We have in total $k$ such equations since there are $2k$ terms and
everything is paired. Notice that since the $b$'s are from a real
symmetric matrix, the only possible choice for $C_t$ is $0$.

Let $x_1,\dots,x_k$ denote the absolute values $\abs{i_{m+1}-i_m}$ of
the left hand side of these $k$ equations. Define $\Tilde{x}_1=i_2-i_1,\;\Tilde{x}_2=i_3-i_2,\dots,\Tilde{x}_{2k}=i_1-i_{2k}$.
We have
\begin{align}
    i_2 \ &= \ i_1 + \Tilde{x}_1 \nonumber \\
    i_3 \ &= \ i_1+\Tilde{x}_1+\Tilde{x}_2 \nonumber \\
    &\;\;\vdots \nonumber \\
    i_1 \ &= \ i_1+\Tilde{x}_1+\Tilde{x}_2+\dots+\Tilde{x}_{2k}.
\end{align}
From the last equation, we get
\begin{equation}\label{eq1.7}
    \Tilde{x}_1+\dots+\Tilde{x}_{2k} \ = \ 0.
\end{equation}
Arguing as in \cite{MMS}, there exists an $\eta_t = \pm 1$ such that
$i_{m+1}-i_m=\eta_t x_t$. Substituting into \eqref{eq1.6}, we have
\begin{equation}
    \Tilde{x}_n \ = \ \eta_t \epsilon_t x_t - \epsilon_t C_t
\end{equation}
where $\epsilon_t=\pm1$. Therefore each $x_t$ is associated to two
$\Tilde{x}$'s, and occurs exactly twice, once as $\Tilde{x}_m = \eta_t x_t$
and again as $\Tilde{x}_n = \eta_t \epsilon_t x_t - \epsilon_t C_t$.
Substituting for the $\Tilde{x}$s in \eqref{eq1.7},
\begin{equation}\label{epsilon}
    \sum\limits_{m=1}^{2k} \Tilde{x}_m \ = \ \sum\limits_{t=1}^k
    (\eta_t(1+\epsilon_t)x_t - \epsilon_tC_t) \ = \ 0.
\end{equation}
If any $\epsilon_t=1$, then the $x_t$ are not linearly independent,
and there are less than $k+1$ degrees of freedom. Thus the terms
where at least one $\epsilon_t = 1$ contribute $O_k(1 / N)$ to
$M_{2k}(\mathcal{D}_1)$, and are negligible in the limit. Hence
$\epsilon_t=-1$ for all $t$.
Substituting $\epsilon_t = -1$ into \eqref{epsilon} gives
$\sum_{t=1}^k C_t = 0$. We have proven the following lemma.

\begin{lem}\label{lem:symm_elts_opp_diags}
    A summand of \eqref{expression: even moment of 1-disco} that contributes in the limit $N \to \infty$ has all $b$'s paired; furthermore,
    for a given pair $b_{i_m,i_{m+1}}=b_{i_n,i_{n+1}}$, the indices satisfy $i_m = i_{n+1}$ and
    $i_{m+1} = i_n$.
\end{lem}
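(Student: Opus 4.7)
The plan is to extract the conclusion of the lemma directly from the framework the authors already built up in the preceding paragraphs, which essentially did most of the work; the statement is really a corollary packaged for later reuse. First I would observe that the claim ``all $b$'s are paired'' is inherited from the earlier discussion: any $b_{i_m,i_{m+1}}$ occurring to an odd power causes the entire expectation to vanish by independence together with the mean-zero assumption on $p$, while multiplicity three or greater forces the number of independent index parameters to drop below $k+1$, making such contributions $O_k(1/N)$ and hence negligible in the limit $N \to \infty$. So in any surviving summand every $b$ appears with multiplicity exactly two.

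For the second assertion I would combine two observations. Since $B$ is a real symmetric matrix with otherwise i.i.d.\ entries, the matching relation $b_{i_m,i_{m+1}} = b_{i_n,i_{n+1}}$ arising from the moment expansion must be read as an identification of random variables; by independence this holds if and only if the unordered pairs $\{i_m,i_{m+1}\}$ and $\{i_n,i_{n+1}\}$ coincide. This leaves exactly two possibilities: (i) $i_m = i_n$ with $i_{m+1} = i_{n+1}$, or (ii) $i_m = i_{n+1}$ with $i_{m+1} = i_n$. Next, because $b$ entries carry no palindromic constraint the only admissible value is $C_t = 0$ for the index $t$ attached to this pair, so \eqref{eq1.6} reduces to $i_{m+1} - i_m = \epsilon_t(i_{n+1} - i_n)$; correspondence (i) forces $\epsilon_t = +1$ and correspondence (ii) forces $\epsilon_t = -1$ (the alternative signs would force $i_n = i_{n+1}$, a coincidence that again depresses the degrees of freedom). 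The paragraph immediately above has already shown that every surviving summand has $\epsilon_t = -1$ for all $t$, so only (ii) survives, yielding $i_m = i_{n+1}$ and $i_{m+1} = i_n$ as claimed.

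The only real subtlety, and the step I would take care to articulate precisely, is that the identity $b_{i_m,i_{m+1}} = b_{i_n,i_{n+1}}$ in the expansion is an identification of random variables enforced by the pairing structure of the moment calculation (via independence), not a numerical coincidence in the realized matrix entries. Once that point is set down, the argument is a two-line combination of the $\epsilon_t = -1$ reduction previously established and the symmetry $b_{i,j} = b_{j,i}$, with no further combinatorial or analytic work required.
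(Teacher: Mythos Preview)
Your proposal is correct and follows essentially the same route as the paper: the lemma is packaged as a corollary of the preceding $\epsilon_t=-1$ analysis, which is why the paper simply writes ``We have proven the following lemma'' immediately before stating it. Your write-up is in fact slightly more careful than the paper's, since you explicitly invoke the fact that for the real symmetric matrix $B$ the identification $b_{i_m,i_{m+1}}=b_{i_n,i_{n+1}}$ as random variables forces the unordered index pairs $\{i_m,i_{m+1}\}$ and $\{i_n,i_{n+1}\}$ to coincide; the paper records only the weaker necessary condition $i_{m+1}-i_m=\pm(i_{n+1}-i_n)$, and from that together with $\epsilon_t=-1$ one obtains only $i_m+i_{m+1}=i_n+i_{n+1}$, so the final identification $i_m=i_{n+1}$, $i_{m+1}=i_n$ really does require the unordered-pair step you supply.
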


For any given $k$, the following lemmas allow us to calculate the
coefficient of a term $A^{I_1}B^{J_1} \cdots A^{I_p} B^{J_p}$.

\begin{lem}
The number of terms of \eqref{eq1.1} which are equivalent to
$A^{I_1}B^{J_1} \cdots A^{I_p} B^{J_p}$ up to cyclic permutation
is given by
\begin{equation}
    \# \text{ equivalent terms } \ = \ \frac{2k}{|S_{A^{I_1} B^{J_1}
    \cdots A^{I_p} B^{J_p}}|} \label{eq1.3}
\end{equation}
where $S_{A^{I_1} B^{J_1} \cdots A^{I_p} B^{J_p}}$ is the set of
those cycles which fix $A^{I_1} B^{J_1} \cdots A^{I_p} B^{J_p}$.
\begin{proof}
Let $G = \langle \cycle{1,2,\cdots,k} \rangle \leq S_k$ and let
$X$ denote the set of terms of $\eqref{eq1.1}$ which are equivalent
to $A^{j_1}B^{j_2} \cdots$ up to cyclic permutation. Then $G$ acts
transitively on $X$ by permuting the factors, and $G$ has order $k$.
So \eqref{eq1.3} follows by applying the Orbit-Stabilizer counting
formula and remembering the factor of $2$ from \eqref{eq1.1}.
\end{proof}
\end{lem}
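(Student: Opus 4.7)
The plan is to apply the Orbit--Stabilizer theorem to a natural cyclic group action, with the factor of $2$ in the formula coming from the structure of $(A+B)^k + (A-B)^k$.

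First, I would unpack where the sum in \eqref{eq1.1} comes from. Expanding $(A+B)^k + (A-B)^k$ produces $2\sum_{l \text{ even}} \sum_W (\cdots)$, where the inner sum runs over all length-$k$ words $W$ in the letters $A$ and $B$ with exactly $l$ copies of $B$ and $k-l$ copies of $A$. After contracting maximal runs of identical letters, a generic such word acquires the block form $A^{I_1}B^{J_1}\cdots A^{I_p}B^{J_p}$. The explicit factor of $2$ in front of the sum in \eqref{eq1.1} is precisely the coefficient of the even-$l$ terms in $(A+B)^k + (A-B)^k$, with the odd-$l$ contributions cancelling between the two summands.

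Next, I would let the cyclic group $G = C_k$ act on the set of length-$k$ words over $\{A,B\}$ by cyclic shift. Because the trace is invariant under cyclic permutation of its factors, two words yield the same trace value in \eqref{eq1.1} if and only if they lie in the same $G$-orbit. The orbit of $W = A^{I_1}B^{J_1}\cdots A^{I_p}B^{J_p}$ then consists of exactly those words cyclically equivalent to $W$. By the Orbit--Stabilizer theorem, this orbit has cardinality $|G|/|\mathrm{Stab}_G(W)| = k/|S_W|$, where $S_W \le C_k$ is the stabilizer, i.e., the set of cyclic shifts fixing $W$.

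Finally, multiplying the orbit size $k/|S_W|$ by the factor of $2$ inherited from $(A+B)^k + (A-B)^k$ yields the claimed count $2k/|S_W|$. The main obstacle I anticipate is purely bookkeeping: making sure the notion of ``equivalent under cyclic permutation'' matches exactly how the composition indices $(I_1, J_1, \ldots, I_p, J_p)$ are enumerated in \eqref{eq1.1} (in particular whether each composition appears once on its own or once per cyclic representative), and verifying that the action on words at the letter level descends correctly to the action on blocks; once that alignment is settled, the orbit--stabilizer step itself is routine.
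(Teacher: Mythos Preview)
Your proposal is correct and follows essentially the same approach as the paper: both apply the Orbit--Stabilizer theorem to the cyclic group $C_k$ acting on length-$k$ words by cyclic shift, then multiply by the factor of $2$ coming from the expansion in \eqref{eq1.1}. Your write-up is more explicit about the bookkeeping (in particular, explaining why the factor of $2$ arises from $(A+B)^k+(A-B)^k$), but the underlying argument is identical.
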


\begin{lem}
To find all terms in \eqref{eq1.1} with coefficient $c$, proceed
as follows.
\begin{enumerate}
    \item Check that $2|c$ and $c|2k$. If not, then there are no
    terms with coefficient $c$. Otherwise, proceed.
    \item The terms are precisely those of the form $A^{I_1} B^{J_1}
    \cdots A^{I_p} B^{J_p}$ for which there is some $m|k$ such that
    $\sum_{\ell=1}^{m} I_{\ell} + J_{\ell} = c / 2$, $(k \sum_{\ell=1}^m
    j_{\ell}) / m$ is even, the sequence $(I_1, J_1, \cdots,
    I_m, J_m)$ is non-repeating, and $(I_1, J_1, \cdots, I_k, J_k)$
    is obtained by repeating $(I_1, J_1, \cdots, I_m, J_m)$ exactly
    $k / m$ times.
\end{enumerate}
\begin{proof}
By Lemma $3$, the term $A^{I_1} B^{J_1} \cdots A^{I_{k}} B^{J_k}$
has coefficient $c$ in \eqref{eq1.1} if and only if
\begin{equation}
    c \ = \ \frac{2k}{|S_{A^{I_1} B^{J_1} \cdots A^{I_{k}} B^{J_k}}|}
\end{equation}
or equivalently, the stabilizer of $A^{I_1} B^{J_1} \cdots A^{I_{k}}
B^{J_k}$ has order $2k / c$. So necessarily $c|2k$, and
furthermore by Lagrange's Theorem we must also have $(2k / c)|k$.
This latter condition is equivalent to the requirement $2|c$. Assume
that both of these hold. Then there is a unique such subgroup of the
cyclic subgroup generated by the permutation $\cycle{1,2,\cdots, k}$,
denoted $\langle \cycle{1,2,\cdots, k} \rangle$ of order $2k / c$,
which is given by $H \coloneqq S_{A^{I_1} B^{J_1} \cdots A^{I_k} B^{J_k}} = \langle
\cycle{1,2,\cdots, k}^{c / 2} \rangle$. One may observe that the
terms with stabilizer $H$ are precisely those of the form $A^{I_1} B^{J_1}
\cdots A^{I_k} B^{J_k}$ such that there is some $m|k$ such that
$\sum_{\ell = 1}^{m} I_{\ell} + J_{\ell} = c / 2$,
$(k \sum_{\ell=1}^m J_{\ell}) / m$ is even, $(I_1, J_1, \cdots, I_m, J_m)$
is non-repeating, and $(I_1, J_1, \cdots, I_k, J_k)$ is obtained by
repeating $(I_1, J_1, \cdots, I_{m}, J_m)$ a total of $k / m$ times.
\end{proof}
\end{lem}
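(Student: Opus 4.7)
The plan is to apply the orbit-stabilizer computation of the preceding lemma, which expresses the coefficient of a term in \eqref{eq1.1} in terms of the order of its cyclic stabilizer, and then to invoke the fact that each finite cyclic group has a unique subgroup of every admissible order.

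By the preceding lemma, the coefficient of $\Tr(A^{I_1}B^{J_1}\cdots A^{I_k}B^{J_k})$ in \eqref{eq1.1} equals $2k/|H|$, where $H$ is the stabilizer under cyclic rotation of the $k$-tuple $((I_1,J_1),\ldots,(I_k,J_k))$ in $G = \langle \cycle{1,2,\ldots,k} \rangle$. Setting this equal to $c$ yields $|H| = 2k/c$, which forces $c \mid 2k$. Lagrange's Theorem gives $|H| \mid |G| = k$, so $(2k/c)\mid k$, equivalent to $2\mid c$. These are exactly the two divisibility checks of step~(1); conversely, whenever both hold, the target order $2k/c$ is a positive divisor of $k$ and hence arises as an actual subgroup order.

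Assuming $c \mid 2k$ and $2 \mid c$, the subgroup of $G$ of order $2k/c$ is unique: it is $H = \langle \sigma^{c/2}\rangle$ where $\sigma = \cycle{1,2,\ldots,k}$, since $\sigma^{c/2}$ has order $k/\gcd(c/2, k) = 2k/c$. A tuple $((I_1,J_1),\ldots,(I_k,J_k))$ is fixed by $\sigma^{c/2}$ precisely when it is invariant under cyclic shift by $c/2$ positions, i.e., it is $(c/2)$-periodic; setting $m = c/2$, this means the tuple consists of $k/m$ consecutive copies of a length-$m$ block $((I_1,J_1),\ldots,(I_m,J_m))$, and the total-length constraint then forces $\sum_{\ell=1}^m (I_\ell + J_\ell) = m = c/2$. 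For the stabilizer to equal $H$ rather than a larger supergroup, the length-$m$ block must be primitive (non-repeating under any proper sub-period dividing $m$).

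Finally, the parity condition arises from cancellation in $(A+B)^k + (A-B)^k$: only terms with an even number of $B$'s survive this sum, so the total $B$-count $(k/m)\sum_{\ell=1}^m J_\ell = (k\sum_{\ell=1}^m J_\ell)/m$ must be even. Combining the stabilizer characterization with the parity check yields exactly step~(2). The principal obstacle is careful bookkeeping: verifying that the cyclic-rotation action on length-$k$ words transfers cleanly to cyclic shifts on pair-tuples, and that the stabilizer-equals-$H$ condition corresponds precisely to primitivity of the length-$m$ periodic block rather than merely $m$-periodicity with possibly smaller true period.
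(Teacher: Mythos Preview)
Your proof follows the same approach as the paper's: apply the orbit--stabilizer formula from the preceding lemma, deduce the divisibility constraints $c \mid 2k$ and $2 \mid c$ via Lagrange's theorem, identify the unique subgroup $H = \langle \sigma^{c/2} \rangle$ of the appropriate order, and translate ``stabilizer equals $H$'' into the periodic-block description. You are in fact more explicit than the paper in two places: you spell out why the length-$m$ block must be primitive (so that the stabilizer is exactly $H$ rather than a proper supergroup containing it), and you explain the parity condition as arising from the cancellation in $(A+B)^k + (A-B)^k$, whereas the paper simply asserts the final characterization with ``One may observe that\ldots''.
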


\subsubsection{The Fourth Moment} We calculate the fourth moment in detail,
as the calculation shows the new, hybrid pairing behavior of the indices. This
will establish the techniques that we use to analyze general even moments.
Let $G_{2k}$ and $S_{2k}$ denote the $2k$\textsuperscript{th} moments of
the Gaussian and semicircle distributions, respectively.
We recall that the $2k$\textsuperscript{th} moment of
Gaussian distribution is given by $(2k-1)!!$ while the $2k$\textsuperscript{th}
moment of the semicircle is given by the $k$\textsuperscript{th} Catalan number
\begin{equation}
    S_{2k}\ =\ \frac{1}{k+1} \binom{2k}{k}.
\end{equation}
Comparing the moments of the disco matrix $\mathcal{D}_1 (A, \mathbf{B})$ to those
of the Gaussian and semicircle offer insight into how the disco structure
creates a fascinating hybrid of disparate limiting distributions.

\begin{thm}
The average fourth moment of $\mathcal{D}_1$ is
\begin{equation}
    M_4 \left( \mathcal{D}_1,N \right) \ = \ \frac{9}{4} + O
    \left( \frac{1}{N} \right).
\end{equation}
\begin{proof}

We wish to study the limit as $N \to \infty$ of the following:
\begin{equation}
     M_4 \left( \mathcal{D}_1,N \right) \ = \ \frac{1}{(2N)^3}
     \mathbb{E}[\Tr(\mathcal{D}_1^4)].
\end{equation}
Expanding and applying the cyclic property of the trace operator,
we see that
\begin{align}\label{eq:1D_4th_mom_exp}
    \mathbb{E}[\Tr(\mathcal{D}_1^4)] \ &=\ \mathbb{E}[\Tr((A+B)^4)
    + \Tr((A-B)^4)] \nonumber \\
    \begin{split}
    \ &=\ 2 \mathbb{E}[\Tr(A^4)] + 8 \mathbb{E}[\Tr(A^2 B^2)]\\
    &\hspace{2cm} + 4 \mathbb{E}[\Tr(ABAB)] + 2\mathbb{E}[\Tr(B^4)].
    \end{split}
\end{align}
We proceed by analyzing each term of \eqref{eq:1D_4th_mom_exp};
for the limit
\begin{equation}
    \lim_{N \to \infty} \frac{1}{(2N)^3} \mathbb{E}[\Tr(B^4)] \ = \
    \frac{1}{4} \label{eq2.1}
\end{equation}
one can see \cite{Wig2} and \cite{Meh}. Furthermore, \cite{MMS}
calculated that
\begin{equation}
    \lim_{N \to \infty} \frac{1}{(2N)^3} \mathbb{E} \left[
    \Tr(A^4) \right] \ = \ \frac{3}{8} \label{eq2.2}.
\end{equation}
Thus we need only consider the terms containing $\mathbb{E}[\Tr(A^2 B^2)]$
and $\mathbb{E}[\Tr(ABAB)]$ in \eqref{eq:1D_4th_mom_exp}. Consider the
former, we see by direct computation that
\begin{equation}
    \mathbb{E}[\Tr(A^2 B^2)] \ = \ \sum_{1 \leq i_1, i_2, i_3, i_4 \leq 2N}
    \mathbb{E}[a_{i_1, i_2} a_{i_2, i_3}] \mathbb{E}[b_{i_3, i_4} b_{i_4, i_1}].
\end{equation}
If $a_{i_1, i_2} \neq a_{i_2, i_3}$, then the expected value of the product
is the product of the expected values, both of which are zero by the assumption
that $p$ has mean $0$; the same holds true for the $b$'s. Hence we need only
consider terms wherein $a_{i_1, i_2} = a_{i_2, i_3}$ and
$b_{i_3, i_4} = b_{i_4, i_1}$, which yield the system of equations
\begin{align}
    i_2 - i_1 \ &=\ \epsilon_1 (i_3-i_2) + C \label{eq2.3} \\
    i_4 - i_3 \ &=\ \epsilon_2 (i_1 - i_4) \label{eq2.4}
\end{align}
where $\epsilon_j \in \{\pm 1\}$ and $C_j \in \{0,N-1, 1-N\}$. The discussion
immediately preceding Lemma \ref{lem:symm_elts_opp_diags} implies we need
consider only the cases in which $\epsilon_1 = -1 = \epsilon_2$.

First assume that $C = N-1$. Then $\eqref{eq2.3}$ implies that $i_3 = N$ and
$i_1 = 1$. Substituting into $\eqref{eq2.4}$ gives $0 = 1-N$, which is not
possible for large $N$. A similar argument shows that we cannot have $C = 1-N$,
so $\eqref{eq2.3}$ and $\eqref{eq2.4}$ become
\begin{align}
    i_2 - i_1 \ &=\ -(i_3 - i_2) \label{eq2.5} \\
    i_4 - i_3 \ &=\ -(i_1 - i_4) \label{eq2.6}.
\end{align}
$\eqref{eq2.5}$ and $\eqref{eq2.6}$ imply that $i_3 = i_1$, and that $i_2$
and $i_4$ are free parameters. There are therefore $N^3$ choices for tuples
$(i_1, i_2, i_3, i_4)$ such that $a_{i_1, i_2} = a_{i_2,i_3}$ and
$b_{i_3, i_4} b_{i_4, i_1}$, and since $p$ has variance $1$, we obtain
\begin{equation}
    \mathbb{E}[\Tr(A^2 B^2)] \ = \ N^3 \label{eq2.7}.
\end{equation}

Now consider the term
\begin{equation}
    \mathbb{E}[\Tr(ABAB)] \ = \ \sum_{1 \leq i_1, i_2, i_3, i_4 \leq n}
    \mathbb{E}[a_{i_1, i_2} a_{i_3, i_4}]
    \mathbb{E}[b_{i_2, i_3} b_{i_4,i_1}] \label{eq2.8}.
\end{equation}
By a similar argument to that given for the $\mathbb{E}[\Tr(A^2 B^2)]$
term, we consider only those terms in which $a_{i_1, i_2} = a_{i_3, i_4}$
and $b_{i_2, i_3} = b_{i_4, i_1}$. For a fixed tuple of indices
$(i_1, i_2, i_3, i_4)$, the symmetry of $B$ guarantees that either
$i_2 = i_4$ and $i_3 = i_1$, or $i_2 = i_1$ and $i_3 = i_4$ holds.
We therefore have $O(N^2)$ terms in $\eqref{eq2.8}$ which
are nonzero. Hence
\begin{equation}
    \mathbb{E}[\Tr(ABAB)] \ = \ O(N^2) \label{eq2.9}
\end{equation}
which vanishes in the limit $N \to \infty$.

Combining $\eqref{eq2.1}$, $\eqref{eq2.2}$, $\eqref{eq2.7}$, and
$\eqref{eq2.8}$, we see that
\begin{equation}
    M_4(\mathcal{D}_1) \ = \ \lim_{N \to \infty} \frac{1}{(2N)^3}
    \mathbb{E}[\Tr(\mathcal{D}_1^4)] \ = \ 2 \cdot \frac{3}{8} +
    8 \cdot \frac{1}{8} + 4 \cdot 0 + 2 \cdot \frac{1}{4} \ = \
    \frac{9}{4}.
\end{equation}
Comparing this with $G_{4} = 3$ and $S_{4} = 2$, we can see that
the fourth moment of $\mathcal{D}_1$ is bounded by those of the
Gaussian and semicircle.
\end{proof}
\end{thm}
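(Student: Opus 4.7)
The plan is to use the diagonalization from equation \eqref{eq:1-disco-diagonalized} to reduce $\mathbb{E}[\Tr(\mathcal{D}_1^4)]$ to $\mathbb{E}[\Tr((A+B)^4) + \Tr((A-B)^4)]$. Expanding both fourth powers as non-commutative polynomials and applying the cyclic invariance of the trace, odd-$B$-degree monomials cancel between the two, and even-degree ones double. After collecting cyclic equivalents among the six length-four words with two $A$'s and two $B$'s (four that are cyclic rotations of $A^2B^2$ and two that are rotations of $ABAB$), only four trace types survive, with coefficients $2$, $8$, $4$, and $2$ respectively for $\Tr(A^4)$, $\Tr(A^2B^2)$, $\Tr(ABAB)$, and $\Tr(B^4)$.

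Next I would evaluate each of the four traces. The pure $A^4$ and $B^4$ terms are classical: \cite{MMS} gives $\mathbb{E}[\Tr(A^4)] = 3N^3 + O(N^2)$ for the PST ensemble (matching the Gaussian's fourth moment), and \cite{Wig2} gives $\mathbb{E}[\Tr(B^4)] = 2N^3 + O(N^2)$ for the RS ensemble (the second Catalan number). For the mixed trace $\Tr(A^2B^2) = \sum a_{i_1,i_2} a_{i_2,i_3} b_{i_3,i_4} b_{i_4,i_1}$, I would use mean-zero independence to restrict to tuples in which both $a$'s are matched and both $b$'s are matched. By Lemma \ref{lem:symm_elts_opp_diags} the $b$-matching forces $i_1 = i_3$, and among the three PST pairing cases in \eqref{eq:MMS_system}, only the choice $C = 0$ with $\epsilon = -1$ leaves $(i_1, i_2, i_4)$ free; the two palindromic wrap-around choices $C = \pm(N-1)$ impose extra boundary equations that pin one further index, reducing the contribution to $O(N^2)$. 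This yields $\mathbb{E}[\Tr(A^2B^2)] = N^3 + O(N^2)$. For $\Tr(ABAB) = \sum a_{i_1,i_2} b_{i_2,i_3} a_{i_3,i_4} b_{i_4,i_1}$, the combination of the PST-pairing of the $a$'s and the Lemma \ref{lem:symm_elts_opp_diags} constraint on the $b$'s creates a crossing matching that forces at most two free indices, so this term is $O(N^2)$ and vanishes in the limit.

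Assembling everything gives $\mathbb{E}[\Tr(\mathcal{D}_1^4)] = (2 \cdot 3 + 8 \cdot 1 + 4 \cdot 0 + 2 \cdot 2)N^3 + O(N^2) = 18 N^3 + O(N^2)$, and dividing by $(2N)^3 = 8N^3$ yields the claimed $9/4 + O(1/N)$. The main obstacle is the careful accounting in $\Tr(A^2B^2)$: one must verify that the $C = 0$ case is the unique source of leading-order contribution and that the forced equality $i_1 = i_3$ from the $b$-pairing is compatible with, rather than conflicting with, the PST pairing of the $a$'s. This is where the hybrid nature of the ensemble first asserts itself arithmetically, since the PST-extra pairings (present for $A$ but absent for $B$) turn out to be suppressed once combined with the stricter RS pairing of the $b$'s.
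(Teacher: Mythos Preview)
Your proposal is correct and follows essentially the same approach as the paper: the same block-diagonalization, the same expansion into the four trace types with coefficients $2,8,4,2$, the same citations for the pure $A^4$ and $B^4$ terms, and the same degree-of-freedom counting for the two mixed terms. The only cosmetic difference is that you phrase the $A^2B^2$ analysis by first invoking Lemma~\ref{lem:symm_elts_opp_diags} to force $i_1=i_3$ from the $b$-pairing and then checking the PST cases, whereas the paper writes out both pairing equations simultaneously and rules out $C=\pm(N-1)$ directly; the logic and conclusion are the same.
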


\subsubsection{Sixth and Eighth Moments}

Any even moment can be determined through brute-force calculation,
though deriving exact formulas as $k \to \infty$ requires handling
involved combinatorics\footnote{See Section \ref{sec:combo_prob}
for a fuller treatment of the combinatorial obstacles to higher
moment calculations.}. Brute-force computation gives the sixth
and eight moments of $\mathcal{D}_1$.

\begin{center}
\begin{tabular}{|c|c|c|c|}\hline
Moment & Semicircle & \hspace{0.5cm} $\mathcal{D}_1$ \hspace{0.5cm}
& Gaussian \\ \hline
6 & 5  & 7    & 15  \\ \hline
8 & 14 & 27.5 & 105 \\ \hline
\end{tabular}
\end{center}

To calculate the $2k$\textsuperscript{th} moment, we may consider
$2h$ of the $a$'s and $2j$ of the $b$'s, where $h, j \in \mathbb{Z}^+$,
placed upon
the circumference of a unit circle. All $a$'s and $b$'s must be paired;
the chords pairing $a$'s are allowed to cross, but nothing may cross
the chords pairing $b$'s. Counting the number of valid pairing
configurations is equivalent to determining the contribution of a
given term in \eqref{eq1.2}. Examples are illustrated in Figures
\ref{fig:all_pairings_A4B2}, \ref{fig:contributing_pairings_A2B2A2B2},
and \ref{fig:non_contributing_pairings_A2B2A2B2}, where hollow dots
represent $a$'s and solid dots represent $b$'s; dashed chords
represent pairings of $a$'s, while solid chords represent pairings
of $b$'s.

In the calculation of the
$6^{\text{th}}$ moment, the contribution of $\mathbb{E} \left[ A^4B^2
\right]$ may be visualized in Figure \ref{fig:all_pairings_A4B2}.
Observe that all pairing configurations contribute; this stems from
the fact that there is only one way to pair the two $b$'s, while
any pairing configuration of the $a$'s does not affect
contribution (a property inherited from the Gaussian behavior of
symmetric palindromic Toeplitz matrices).

\begin{figure}[ht]
    \centering
    \includegraphics[width=0.2\textwidth]{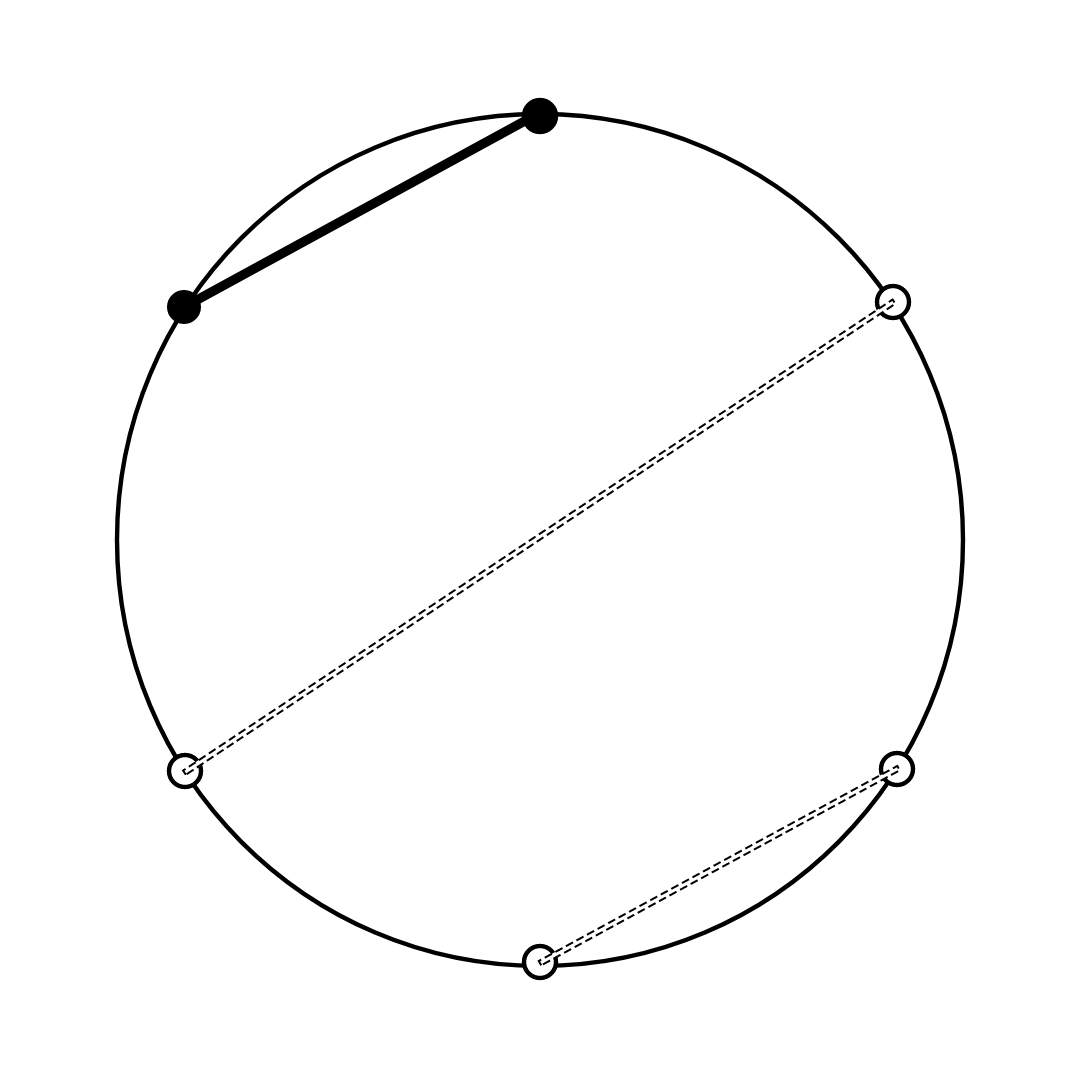}
    \includegraphics[width=0.2\textwidth]{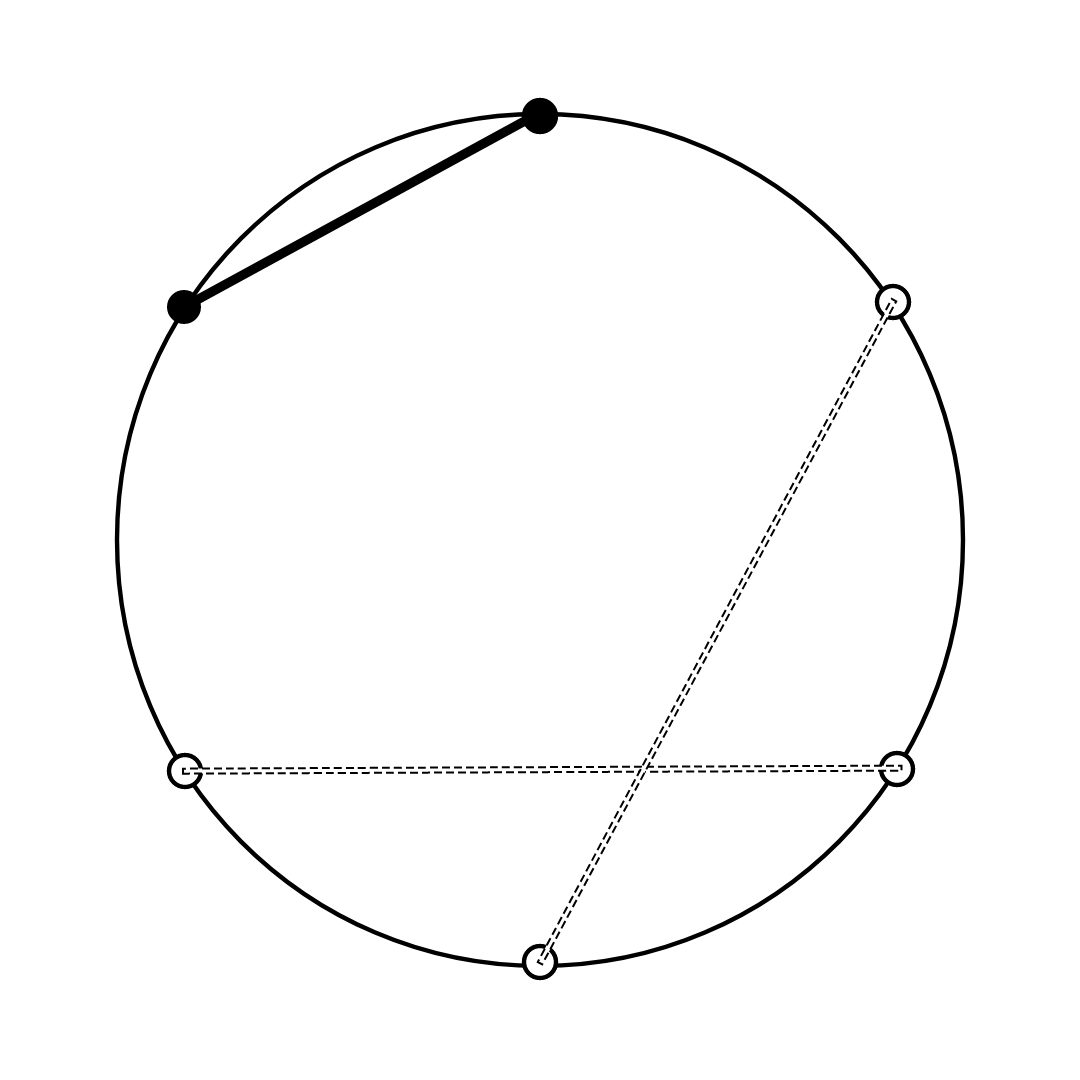}
    \includegraphics[width=0.2\textwidth]{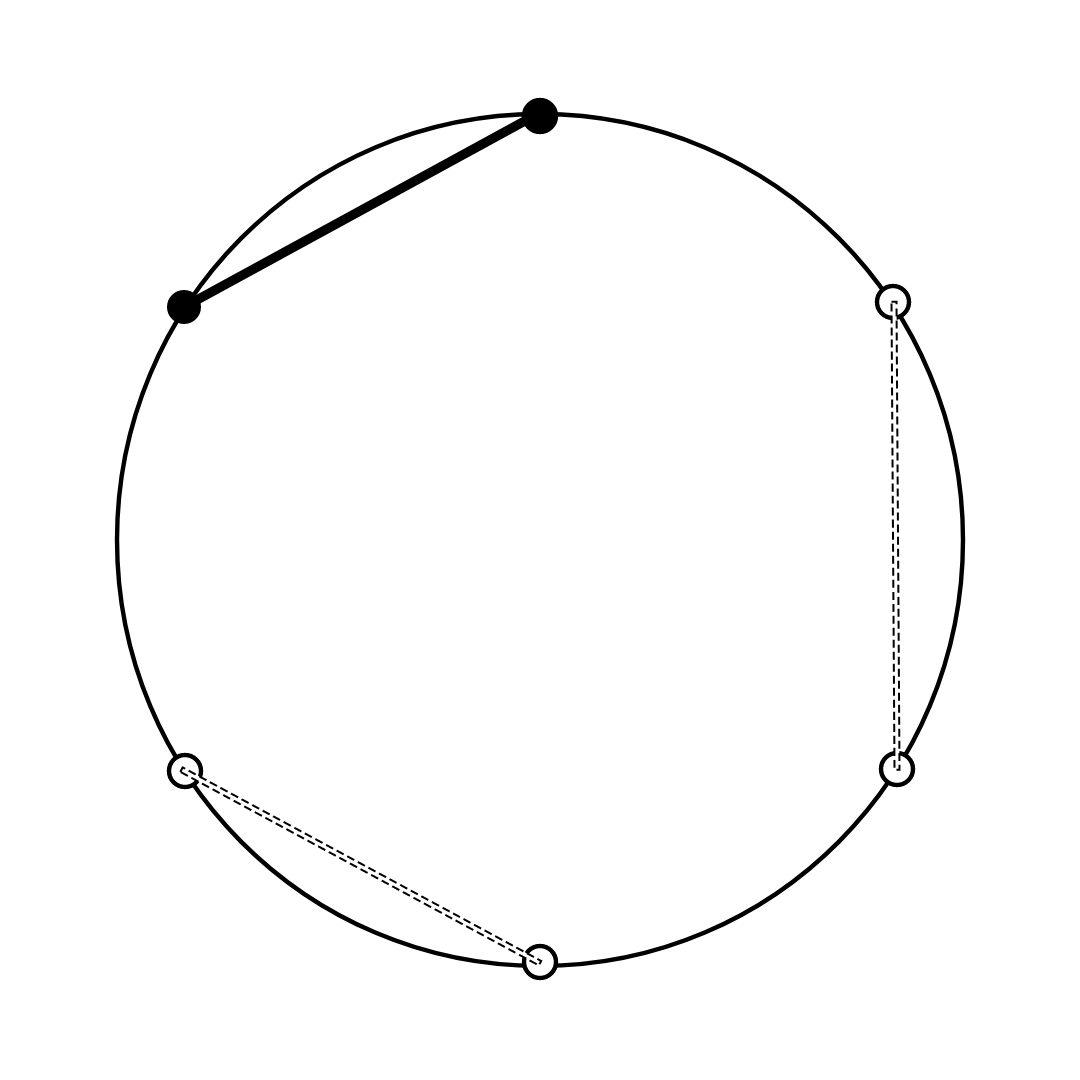}
    \caption{Visualization of all pairings of
    $\mathbb{E} \left[ A^4 B^2 \right]$; notice
    all pairings contribute.}
    \label{fig:all_pairings_A4B2}
\end{figure}

Similarly, the contribution of $\mathbb{E} \left[ A^2 B^2 A^2 B^2 \right]$
in the $8^{\text{th}}$ moment may be visualized by the pairing configurations
in Figure \ref{fig:contributing_pairings_A2B2A2B2}.

\begin{figure}[ht]
    \centering
    \includegraphics[width=0.2\textwidth]{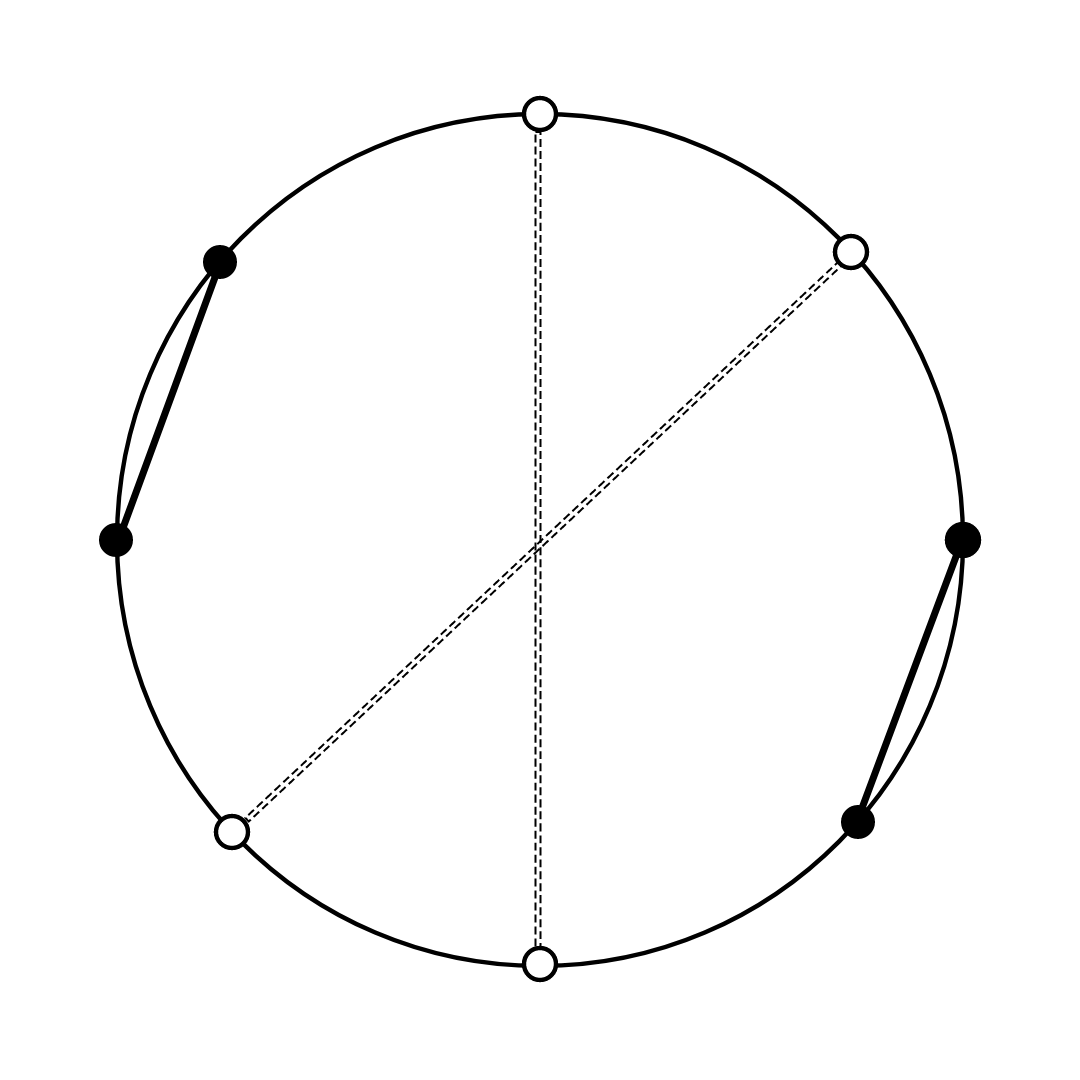} \includegraphics[width=0.2\textwidth]{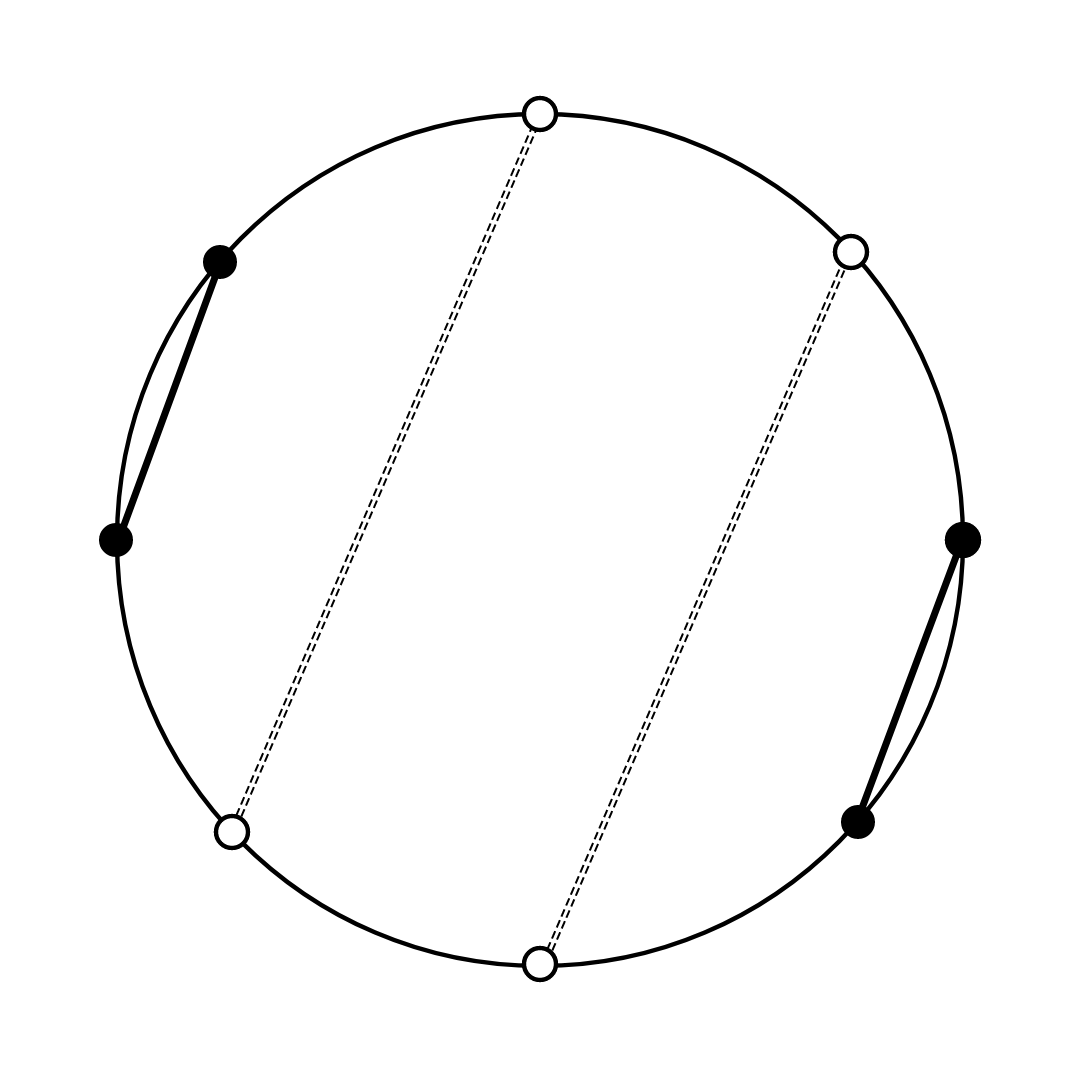}
    \includegraphics[width=0.2\textwidth]{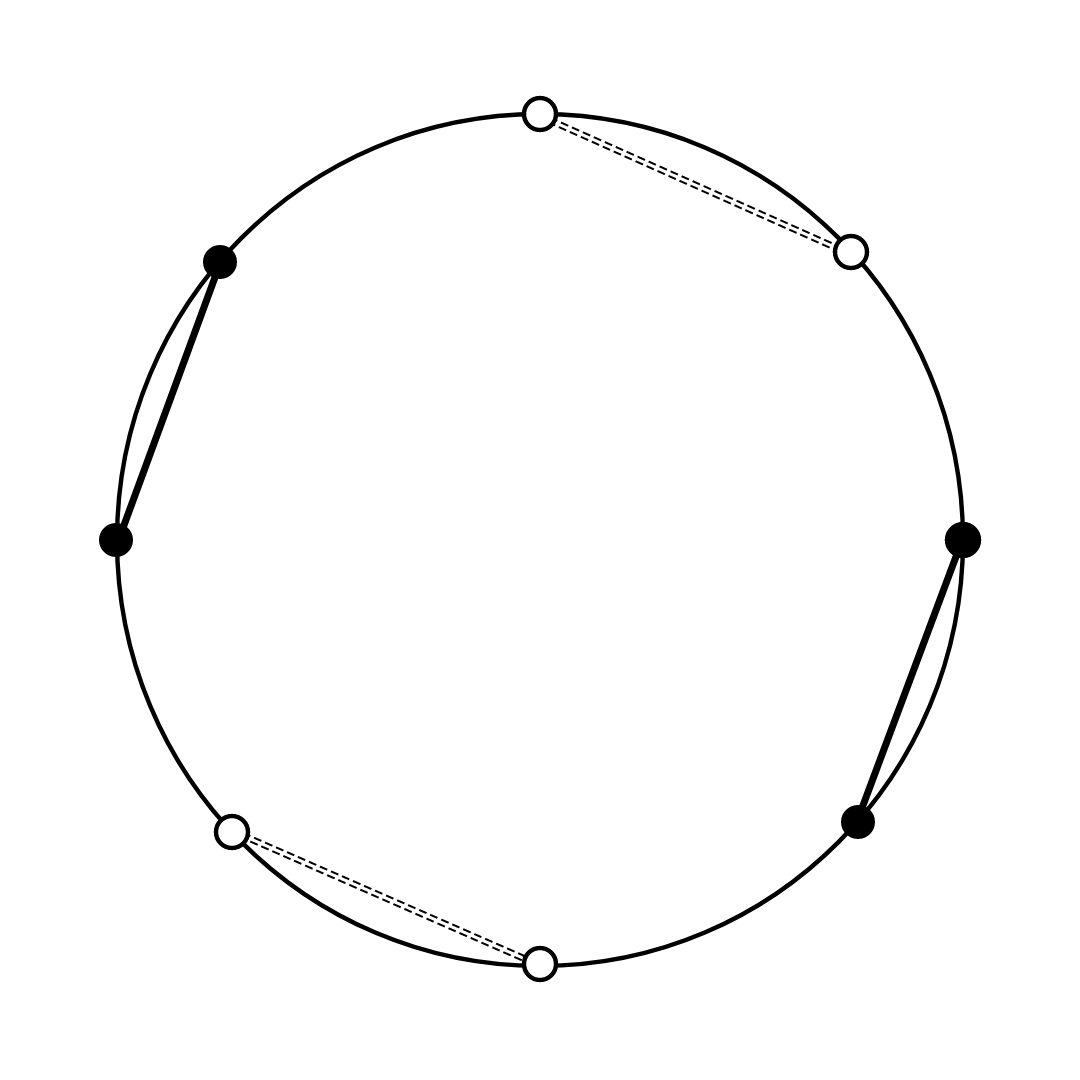}
    \includegraphics[width=0.2\textwidth]{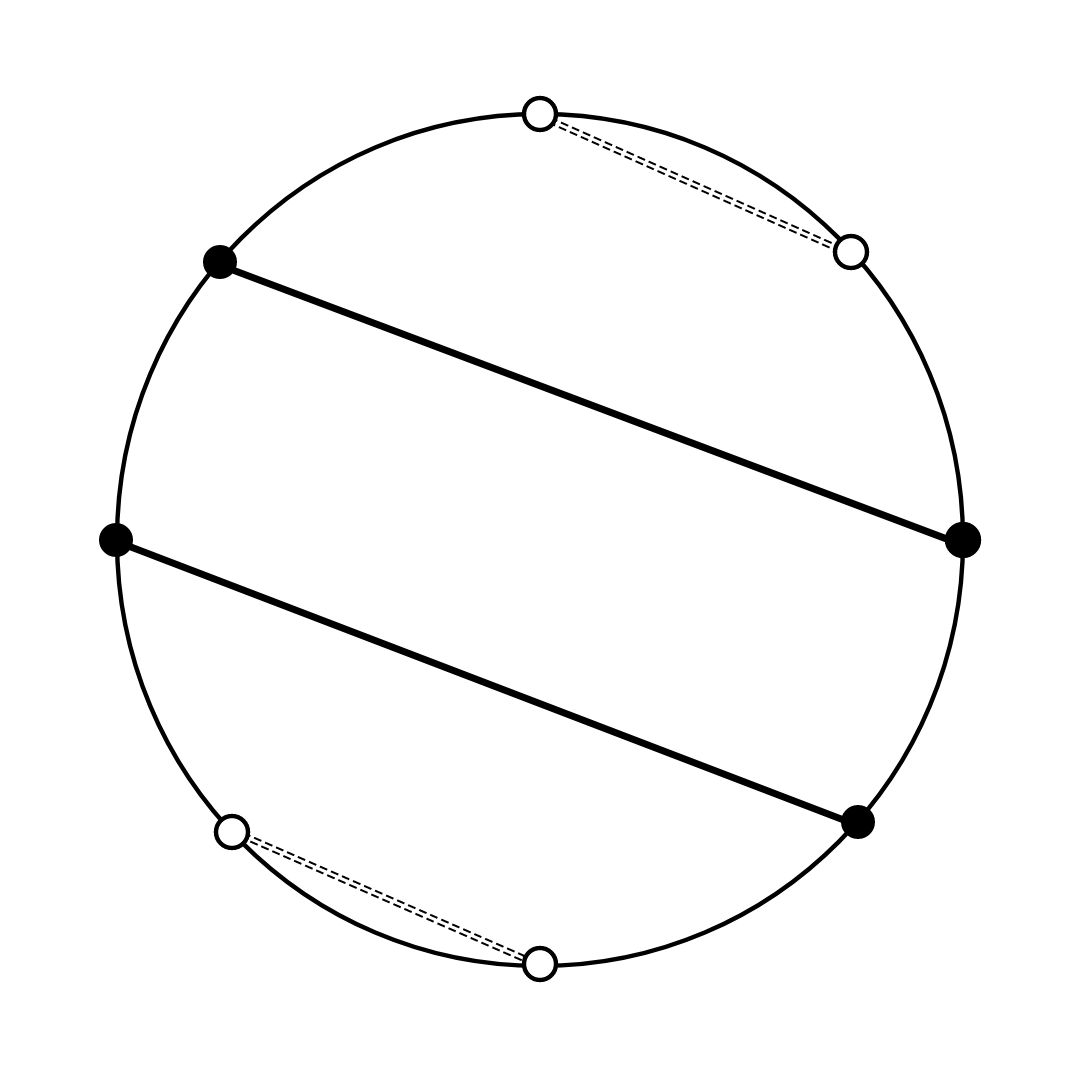}
    \caption{Visualization of contributing pairings of
    $\mathbb{E} \left[ A^2 B^2 A^2 B^2 \right]$.}
    \label{fig:contributing_pairings_A2B2A2B2}
\end{figure}

In contrast, each of the non-contributing pairing configurations
of $\mathbb{E} \left[ A^2 B^2 A^2 B^2 \right]$ may be visualized
by the pairing configurations in Figure \ref{fig:non_contributing_pairings_A2B2A2B2}.
Notice that each non-contributing pairing has either a crossing
of two pairs of $b$'s, or a pair of $a$'s crossing a pair of $b$'s.
\begin{figure}[ht]
    \centering
    \includegraphics[width=0.2\textwidth]{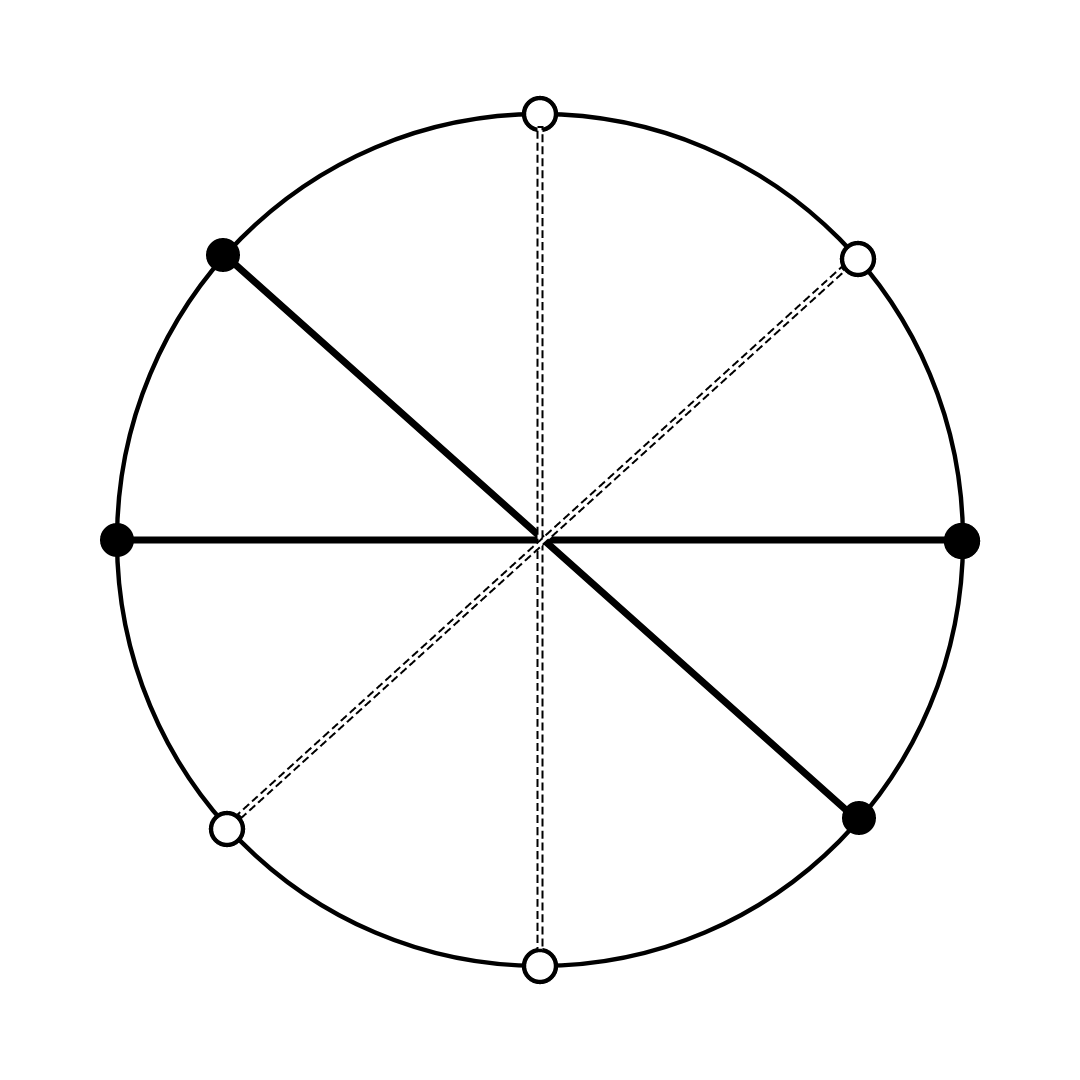} \includegraphics[width=0.2\textwidth]{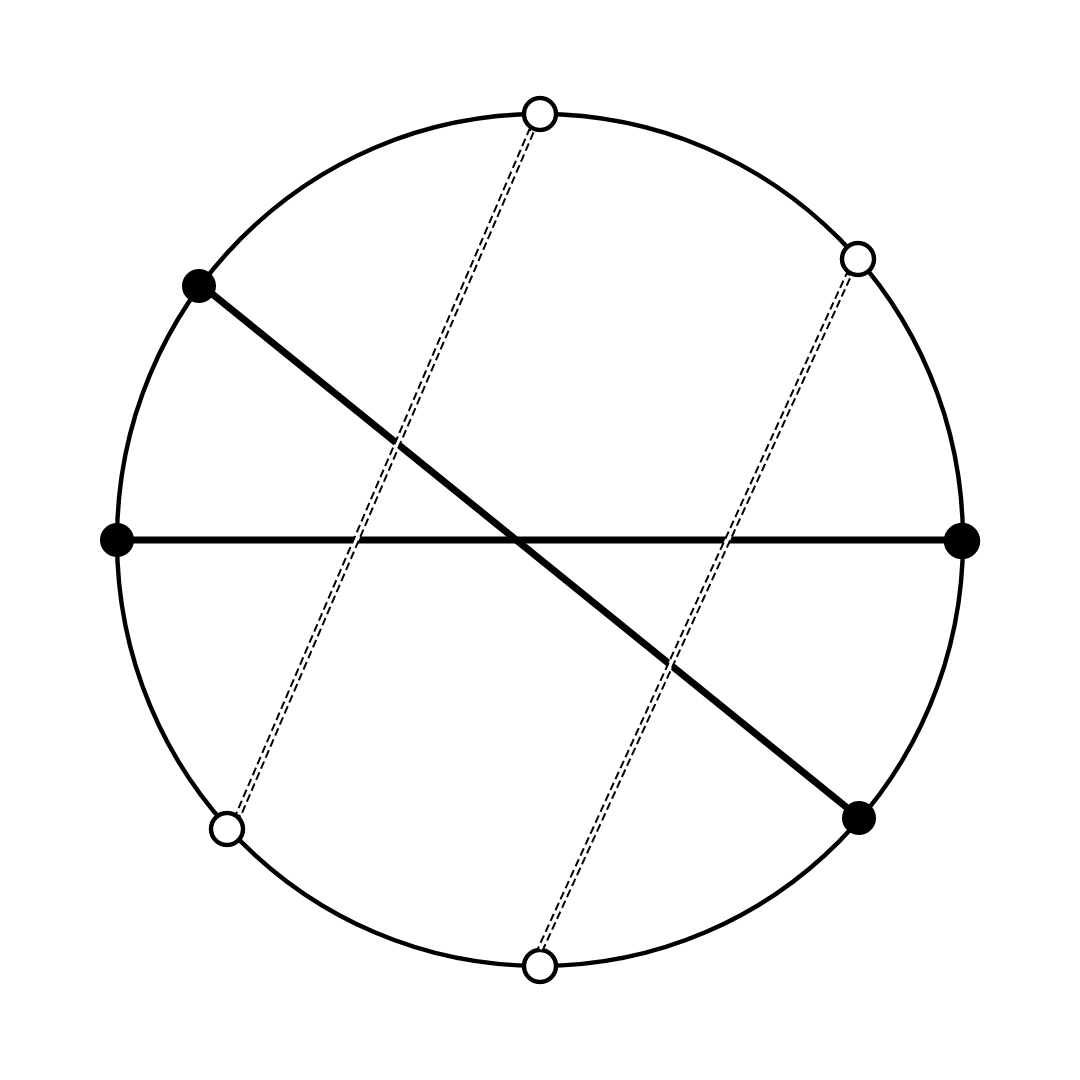}
    \includegraphics[width=0.2\textwidth]{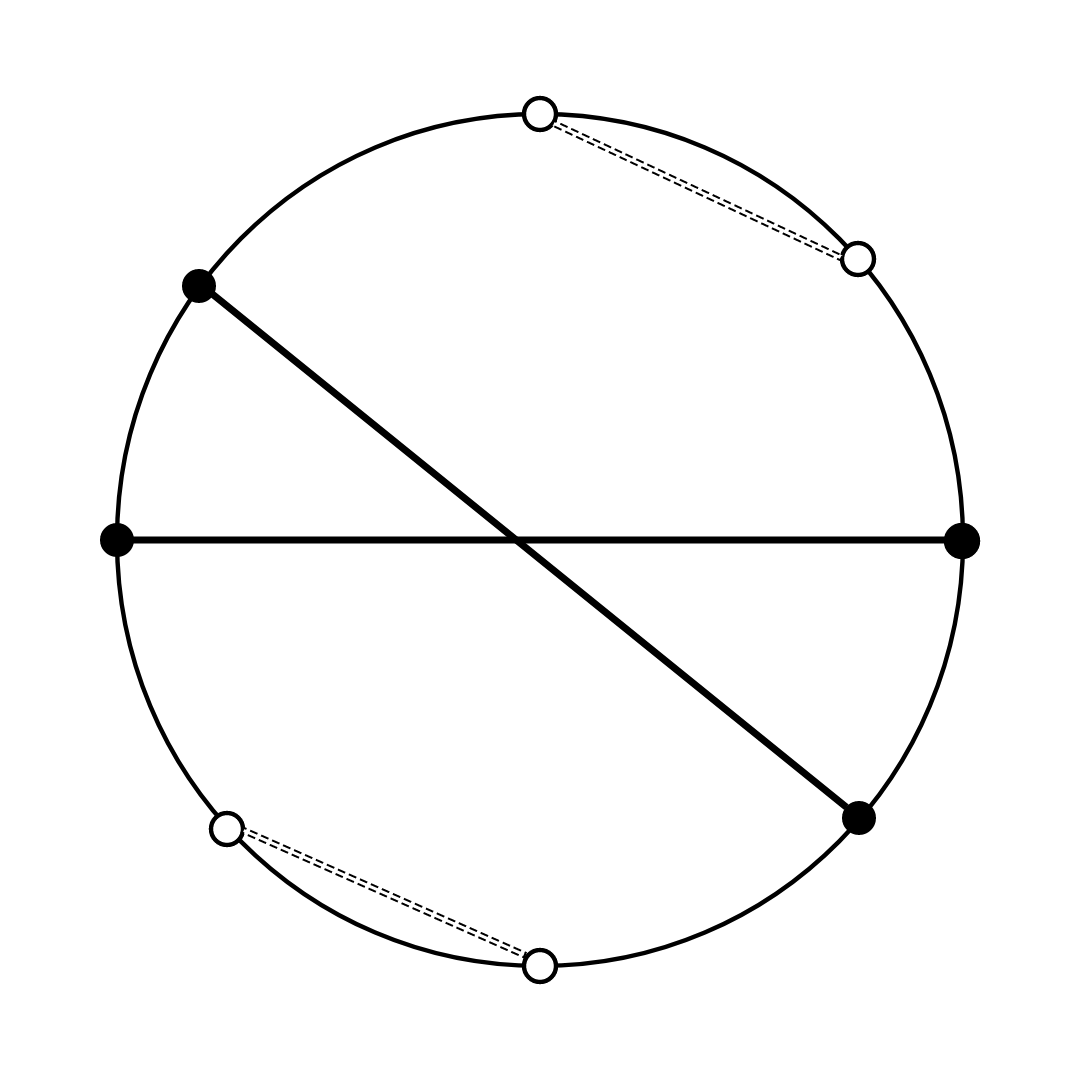}
    \includegraphics[width=0.2\textwidth]{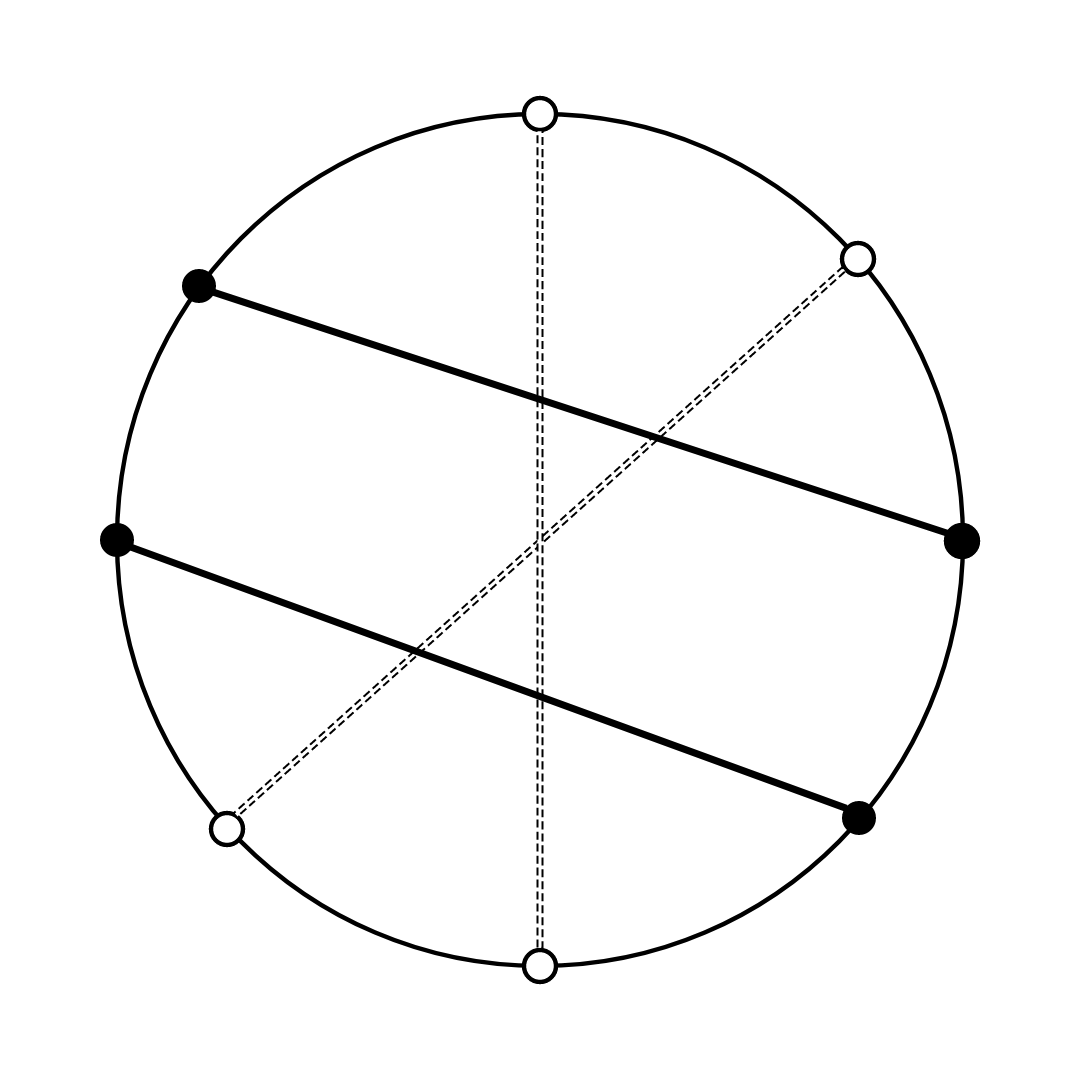}
    \caption{Visualization of non-contributing pairings of
    $\mathbb{E} \left[ A^2 B^2 A^2 B^2 \right]$.}
    \label{fig:non_contributing_pairings_A2B2A2B2}
\end{figure}

Each legal pairing configuration contributes fully in the limit as
$N \to \infty$, while any other pairing configuration contributes
$O \left(1 / N \right)$.

The pairing behavior of elements of $\mathcal{D}_1$ can be understood
as a hybrid of semicircle and Gaussian behaviors. Given $2k$ elements
on the circumference of a unit circle, if all $(2k-1)!!$ possible
pairings contribute fully in the limit as $N \to \infty$, one recovers
precisely the $2k$\textsuperscript{th} moment of the Gaussian; if on
the other hand only the $\binom{2k}{k} / (k+1)$ non-crossing
pairings contribute, one recovers precisely the $2k$\textsuperscript{th}
moment of the semicircle.


\subsection{Upper Bounds of High Moments}
For ease of notation, we make denote by $G_{2k}$ and $S_{2k}$ the
$2k$\textsuperscript{th} moments of the Gaussian and semicircle
distributions, respectively. We show that the $2k$\textsuperscript{th}
moments $M_{2k}\left( \mathcal{D}_1 \right)$ are bounded away from
$G_{2k}$ and $S_{2k}$. We first show that products arising in the
expansion of \eqref{eq1.1} with form $\mathbb{E}\left[ \Tr(A^I B^J)\right]$
have a contribution that is computable in closed form.

\begin{defi}\label{def:power_equivalence}
For fixed $I, J \in 2\mathbb{Z}^+$, let $\prod_{h=1}^{p} A^{i_h} B^{j_h}
\sim \prod_{k=1}^{q} A^{s_k} B^{t_k}$ if and only if $\sum_{h=1}^p i_h =
\sum_{k=1}^q s_k = I$ and $\sum_{h=1}^p j_h = \sum_{k=1}^q t_k = J$.
\end{defi}

For $1<p<\infty$, the preceding relation defines equivalence classes on
the set of all finite products with form $\prod_{h=1}^{p} A^{i_h} B^{j_h}$,
where $\sum_{k=1}^p i_k = I$ and $\sum_{k+1}^p j_k = J$ for $I, J \in
2\mathbb{Z}^+$. Each equivalence class contains a canonical element
$A^I B^J$, and we denote its equivalence class by $\left[ A^I B^J \right]$.

\begin{lem}\label{lemma:std_form_contribution}
Fix $I, J \in 2\mathbb{Z}^+$ and let $k = I + J$. Then the contribution
of the following is:
\begin{equation}
    \lim_{N\to\infty} \frac{1}{N^{\frac{k}{2}+1}} \mathbb{E} \left[
    \Tr(A^I B^J) \right] \ = \ (I - 1)!!\frac{2}{J+2}
    \binom{J}{J/2}.
\end{equation}

\begin{proof}
Observing that
\begin{equation}\label{eq:std_form_trace_exp}
    \mathbb{E}\left[ \Tr(A^I B^J) \right] \ = \
    \sum_{1 \leq i_1, \ldots, j_J \leq 2N} \mathbb{E} \left[
    a_{i_1, i_2} a_{i_2, i_3}
    \cdots a_{i_{I}, j_1} b_{j_1, j_2} b_{j_2, j_3} \cdots
    b_{j_{J}, i_1} \right]
\end{equation}
we see that in each summand the $a$'s are matched in pairs and the $b$'s
are matched in pairs. If any $a$ or $b$ occurs to a third or higher
power, there are fewer than $k / 2 + 1$ degrees of freedom, and
the summand will not contribute under the limit $N \to \infty$. There
are $(I-1)!!$ ways to match the $a$'s in pairs, each resulting in
$I / 2 + 1$ degrees of freedom. Similarly, there are $2 \binom{J}{J/2}
/ (J+2)$ ways to pair the $b$'s that ensure $J / 2$ degrees
of freedom; this may be interpreted as pairing $J$ of the $b$'s placed on the
circumference of a circle with non-intersecting chords. There are no
arrangements wherein a pair of $a$'s crosses over a pair of $b$'s. As
there are $n$ choices for each degree of freedom we have
\begin{align}
    \lim_{N \to \infty} \frac{1}{N^{\frac{k}{2}+1}} \mathbb{E}\left[
    \Tr(A^I B^J) \right] \ &=\ \lim_{N \to \infty} \frac{1}{N^{
    \frac{k}{2}+1}} (I-1)!!\frac{2}{J+2} \binom{J}{J/2} N^{\frac{I}{2}
    +\frac{J}{2}+1} \nonumber \\
    \ &=\ (I-1)!!\frac{2}{J+2} \binom{J}{J/2}.
\end{align}
\end{proof}
\end{lem}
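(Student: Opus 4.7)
My plan is to expand $\mathbb{E}\!\left[\Tr(A^I B^J)\right]$ in index form, exploit the independence of the PST and real symmetric ensembles to factor the expectation, reduce to perfect pairings, and then count the surviving configurations separately on the $a$-chain and the $b$-chain. Concretely, I would begin with
\begin{equation*}
\mathbb{E}\!\left[\Tr(A^I B^J)\right] \ = \ \sum_{1 \le i_1, \ldots, j_J \le N} \mathbb{E}\!\left[a_{i_1,i_2}\cdots a_{i_I,j_1}\right] \cdot \mathbb{E}\!\left[b_{j_1,j_2}\cdots b_{j_J,i_1}\right],
\end{equation*}
observe that any summand containing a singly-occurring $a$- or $b$-entry vanishes since $p$ has mean zero, and invoke the standard degree-of-freedom argument used earlier in this section: a summand in which some entry appears with multiplicity at least three admits at most $k/2$ free indices and so contributes $O(N^{k/2})$, negligible after dividing by $N^{k/2+1}$. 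This reduces the calculation to perfect matchings of the $I$ $a$-edges and of the $J$ $b$-edges.

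Next I would enumerate the contributing matchings on each block separately. For the $a$-chain, the PST analysis from \cite{MMS} (rehearsed in equations \eqref{eq:MMS_system}--\eqref{epsilon} above) shows that every one of the $(I-1)!!$ perfect matchings contributes at leading order: the extra diagonal parameter $C_t \in \{0, \pm(N-1)\}$ supplies the freedom that a generic crossing would otherwise cost, and each such matching leaves $I/2+1$ free indices among $\{i_1,\ldots,i_I,j_1\}$. For the $b$-chain, Lemma \ref{lem:symm_elts_opp_diags} forces paired entries to satisfy $i_m = i_{n+1}$ and $i_{m+1} = i_n$, so the semicircle analysis applies and only the non-crossing perfect matchings of the $J$ ordered $b$-edges survive, numbering $\tfrac{1}{J/2+1}\binom{J}{J/2} = \tfrac{2}{J+2}\binom{J}{J/2}$. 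Crucially, because the $a$-edges and $b$-edges occupy disjoint arcs of the trace cycle, no $a$-chord can cross a $b$-chord, so the non-crossing requirement constrains only $b$-$b$ interactions.

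The final step is the joint degree-of-freedom count. A valid $a$-matching leaves $I/2+1$ free indices on $\{i_1,\ldots,i_I,j_1\}$, and a valid non-crossing $b$-matching adds exactly $J/2$ more free indices once one accounts for the two boundary indices $j_1, i_1$ already tracked by the $a$-chain. Each joint configuration therefore has $k/2+1$ free indices, each ranging over $\Theta(N)$ values; by the variance-one hypothesis, each contributes $(1+o(1)) N^{k/2+1}$. Multiplying the $a$-count by the $b$-count and dividing by $N^{k/2+1}$ yields $(I-1)!!\,\frac{2}{J+2}\binom{J}{J/2}$ as claimed.

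The most delicate point I anticipate is the bookkeeping in the final paragraph: I must verify that the $b$-chain contributes exactly $J/2$ (and not $J/2 \pm 1$) new free indices after its equivalence classes are intersected with those from the $a$-chain at $j_1$ and $i_1$, and that the PST diagonal freedom on crossing $a$-matchings does not create hidden overcounts when combined with the non-crossing $b$-matchings. I would handle this by viewing the combined matching as a single involution on the $k$-cycle of trace edges and directly counting equivalence classes of vertices in the induced quotient, while separately tabulating the PST diagonal choices as in \cite{MMS}. A useful sanity check is the case $I=J=2$, where the formula predicts $1\cdot \tfrac{2}{4}\binom{2}{1}=1$, matching the explicit evaluation $\mathbb{E}\!\left[\Tr(A^2 B^2)\right] = N^3$ from \eqref{eq2.7}.
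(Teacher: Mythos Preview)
Your proposal is correct and follows essentially the same route as the paper: expand the trace in indices, reduce to perfect pairings by the mean-zero and degree-of-freedom arguments, count the $a$-matchings as $(I-1)!!$ via the PST analysis, count the $b$-matchings as the Catalan number $\tfrac{2}{J+2}\binom{J}{J/2}$ via the non-crossing constraint, and observe that no $a$--$b$ crossings arise because the two blocks sit on disjoint arcs of the trace cycle. Your version is in fact somewhat more careful than the paper's in two respects---you explicitly factor the expectation via independence, and you flag the boundary bookkeeping at $j_1,i_1$ that determines whether the $b$-chain contributes $J/2$ rather than $J/2\pm 1$ new free indices---and your sanity check against \eqref{eq2.7} is a good addition.
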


\begin{lem}\label{lemma:std_form_contribution_bound}
Fix $I, J \in 2\mathbb{Z}^+$ and let $k = I + J$.

\begin{equation}
    \lim_{N\to\infty} \frac{1}{N^{\frac{k}{2}+1}}\mathbb{E}\left[
    \Tr(A^I B^J) \right] \geq \max_{\left[ A^I B^J \right] \setminus
    A^I B^J} \left\lbrace \lim_{n\to\infty} \frac{1}{N^{\frac{k}{2}+1}}
    \mathbb{E}\left[ \Tr\left( \prod_{k=1}^p A^{i_k} B^{j_k} \right)
    \right] \right\rbrace.
\end{equation}

\begin{proof}
By Lemma \ref{lemma:std_form_contribution} we have that
\begin{equation}
    \lim_{N \to \infty} \frac{1}{N^{\frac{k}{2}+1}} \mathbb{E}\left[
    \Tr(A^I B^J) \right] \ = \ (I-1)!!\frac{2}{J+2} \binom{J}{J/2}.
\end{equation}
For elements of $[A^I B^J]$ the maximum number of contributing pairings
occurs when no parings of $a$'s crosses over a pairing of $b$'s; this
occurs in the summands of Equation \ref{eq:std_form_trace_exp} as all
$a$'s are adjacent and all $b$'s are adjacent. However, all elements of
$\left[ A^I B^J \right] \setminus A^I B^J$ have form $\prod_{k=1}^p
A^{i_k} B^{j_k} = A^{i_1} B^{j_1} \cdots A^{i_p} B^{j_p}$ for $p \geq 2$,
in which case there exist at least $O(p)$ pairings of $a$'s
and $b$'s that result in mutual crossovers. Each of these crossovers
results in a loss of a degree of freedom, yielding
\begin{equation}
     \lim_{N\to\infty} \frac{1}{N^{\frac{k}{2}+1}}\mathbb{E}
    \left[ \Tr\left( \prod_{k=1}^{p} A^{i_k} B^{j_k} \right) \right]
     \leq \left( (I-1)!! - O(p) \right)
    \frac{2}{J+2} \binom{J}{J/2}
\end{equation}
from which the claim follows.
\end{proof}
\end{lem}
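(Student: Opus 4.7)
The plan is to replay the trace-expansion argument of Lemma \ref{lemma:std_form_contribution} for an arbitrary representative of the equivalence class $[A^I B^J]$ and to show that the combinatorial count of leading-order configurations is maximized by the canonical representative $A^I B^J$.

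First I would expand $\E[\Tr(\prod_{h=1}^p A^{i_h} B^{j_h})]$ as a sum over index tuples $(i_1,\dots,i_k)$, with $I$ $a$-entries and $J$ $b$-entries arranged in the cyclic order dictated by the product. Only summands in which every $a$ and every $b$ appears in a matched pair contribute in the limit $N\to\infty$; triples or higher multiplicities drop the degrees of freedom below $k/2+1$ and vanish. By Lemma \ref{lem:symm_elts_opp_diags}, the contributing $b$-pairings correspond to non-crossing chord diagrams on the $b$-positions of the cycle, of which there are $\frac{2}{J+2}\binom{J}{J/2}$, while the analysis of \cite{MMS} shows that any of the $(I-1)!!$ perfect matchings on the $a$-positions appears in the contributing regime $\epsilon_t = -1$, $C_t = 0$.

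The geometric constraint that separates the canonical and non-canonical cases is the prohibition of $a$-$b$ crossings: if an $a$-chord crosses a $b$-chord, the resulting system on the index-increments acquires an extra linear dependence, one degree of freedom is lost, and the configuration contributes only $O(1/N)$, exactly the phenomenon responsible for $\E[\Tr(ABAB)] = O(N^2)$ in the fourth-moment computation. For the canonical representative $A^I B^J$ the $I$ $a$-positions occupy one contiguous arc of the cycle and the $J$ $b$-positions the complementary arc, so every $a$-chord lies entirely on the $a$-arc, every $b$-chord on the $b$-arc, and no $a$-$b$ crossings are possible; all $(I-1)!! \cdot \frac{2}{J+2}\binom{J}{J/2}$ pairs $(\pi_a,\pi_b)$ contribute fully, recovering the value of Lemma \ref{lemma:std_form_contribution}.

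For any representative $\prod_{h=1}^p A^{i_h} B^{j_h}$ with $p\geq 2$, the $a$- and $b$-positions are interleaved around the cycle, and for each non-crossing $\pi_b$ only a (possibly proper) subset of the $(I-1)!!$ perfect matchings on the $a$-positions avoids all $b$-chords. Summing over the $\frac{2}{J+2}\binom{J}{J/2}$ choices of $\pi_b$ bounds the number of contributing configurations by $(I-1)!! \cdot \frac{2}{J+2}\binom{J}{J/2}$, which yields the asserted inequality. The main obstacle will be the rigorous degree-of-freedom bookkeeping: one must verify that any single $a$-$b$ crossing produces at least one redundant equation in the linear system on the index-increments, and then extend by induction on the number of crossings. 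The isolated-crossing base case follows the pattern of the $\Tr(ABAB)$ analysis, where the PST relation $\tilde{x}_m = -\tilde{x}_n$ from the $a$-pair is forced to coincide with a consequence of the stronger RS identities $i_p = i_{q+1}$, $i_{p+1} = i_q$ once the two chords interlink.
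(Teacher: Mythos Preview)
Your proposal is correct and follows essentially the same route as the paper: identify the normalized limit with the count of matchings $(\pi_a,\pi_b)$ having $\pi_b$ non-crossing and no $a$--$b$ chord crossings, then observe that the canonical word $A^I B^J$ places the $a$'s and $b$'s on disjoint arcs so every $\pi_a$ is admissible, whereas any interleaved word with $p\ge 2$ forbids some $\pi_a$. The paper's own proof is in fact sketchier than yours on the degree-of-freedom bookkeeping you flag as the main obstacle; it simply asserts that $O(p)$ pairings are lost to crossovers without carrying out the linear-dependence argument you outline.
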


\subsubsection{Weak Upper Bound of Even Moments}

\begin{thm}\label{thm:weak_upper_bound_even_moments}
$For \; k\geq1,\;\lim_{N\rightarrow\infty} M_{2k}(\mathcal{D}_1)
\leq(2k-1)!!= G_{2k}$.
\begin{proof}
We first obtain an upper bound for the $2k$\textsuperscript{th}
moments by assuming our matrix $A$ and $B$ commute. It is clear
that this is an upper bound from Lemma
\ref{lemma:std_form_contribution_bound}. Our equation
from \eqref{eq1.2} then becomes
\begin{align}
     M_{2k}(\mathcal{D}_1,N) \ &=\ \frac{1}{(2N)^{k+1}} \mathbb{E}
     \left[ \Tr[(A+B)^{2k}+(A-B)^{2k}] \right] \nonumber \\
    \ &\leq\ \frac{1}{(2N)^{k+1}}\ 2\sum\limits_{i=0}^{k}\binom{2k}{2i}
    \mathbb{E}\left[\Tr(A^{2i}B^{2k-2i})\right].
\end{align}
Let $N \to\infty$ and by Lemma \ref{lemma:std_form_contribution}
we have:
\begin{align}
    M_{2k}(\mathcal{D}_1) \ &\leq\ \frac{1}{2^k}\ \left[(2k-1)!! +
    \sum\limits_{i=1}^{k-1} \binom{2k}{2i} \binom{2i}{i}
    \frac{(2k-2i-1)!!}{i+1} + \frac{1}{k+1}\binom{2k}{k}\right] \nonumber\\
    \ &=\ \frac{1}{2^k}\ \left[ (2k-1)!! + \sum\limits_{i=1}^{k-1}
    \frac{(2k)!(2k-2i-1)!}{(2k-2i)!(k-i-1)!(i+1)i!i!2^{k-1-i}} +
    \frac{(2k)!}{(k+1)k!k!}\right] \nonumber \\
    \ &=\ \frac{(2k-1)!!}{2^k} \left[ 1 + \sum\limits_{i=1}^{k-1} \frac{k!2^i}{(k-i)!(i+1)!i!}+\frac{2^k}{(k+1)!}\right] \nonumber \\
    \ &=\ (2k-1)!! \left[ \frac{1}{2^k} + \sum \limits_{i=1}^{k-1}
    \frac{2^i}{(i+1)!} \binom{k}{i} \left( \frac{1}{2} \right)^{k-i} \left(
    \frac{1}{2} \right)^i + \frac{1}{(k+1)!} \right].
\end{align}
Note that for non-negative integer $i$, it is clear that $2^i
\leq (i+1)!$, i.e., $2^i / (i+1)! \leq 1$. Hence, we may
replace $2^i / (i+1)!$ in the above sum by $1$ and obtain
the following:
\begin{align}
    M_{2k}(\mathcal{D}_1) \ &\leq\ (2k-1)!!\left[\frac{1}{2^k} +
    \left( \sum\limits_{i=0}^{k} \binom{k}{i}( \frac{1}{2})^{k-i}
    (\frac{1}{2})^i\right) - \frac{2}{2^k} + \frac{1}{(k+1)!}
    \right] \nonumber \\
    \ &=\ (2k-1)!!\left[1 + \frac{1}{(k+1)!} - \frac{1}{2^k}\right].
\end{align}
As noted earlier $2^k \leq (k+1)!$ so that $1 / (k+1)! -
1 / 2^k \leq 0$, from which it follows clearly that
\begin{align}
    M_{2k} &\leq (2k-1)!!\left[1 + \frac{1}{(k+1)!}-\frac{1}{2^k}\right]
    \ \leq\ (2k-1)!!
\end{align}
as desired.
\end{proof}
\end{thm}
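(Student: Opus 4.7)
The plan is to use the diagonalization of $\mathcal{D}_1$ established in \eqref{eq:1-disco_decomp} together with Lemmas \ref{lemma:std_form_contribution} and \ref{lemma:std_form_contribution_bound} to reduce the full noncommutative trace expansion to a single explicit binomial sum, which is then bounded by $(2k-1)!!$ via elementary estimates. Starting from \eqref{eq:1-disco-diagonalized}, binomial expansion of $(A+B)^{2k}+(A-B)^{2k}$ produces only the even powers of $B$, each with coefficient $2\binom{2k}{2i}$. If $A$ and $B$ commuted the result would be exactly $2\sum_i\binom{2k}{2i}\mathbb{E}[\Tr(A^{2i}B^{2k-2i})]$. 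In the genuine noncommutative case, each additional product $\prod_h A^{i_h}B^{j_h}$ in the equivalence class $[A^{2i}B^{2k-2i}]$ is, by Lemma \ref{lemma:std_form_contribution_bound}, dominated in the $N\to\infty$ limit by the canonical representative $A^{2i}B^{2k-2i}$. Applying this bound term by term gives
\[
M_{2k}(\mathcal{D}_1) \;\leq\; \lim_{N\to\infty}\frac{2}{(2N)^{k+1}} \sum_{i=0}^{k}\binom{2k}{2i}\,\mathbb{E}\!\left[\Tr(A^{2i}B^{2k-2i})\right].
\]

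Next I would apply Lemma \ref{lemma:std_form_contribution} with $I=2i$ and $J=2k-2i$ to evaluate each limit in closed form as $(2i-1)!!\,\tfrac{2}{2k-2i+2}\binom{2k-2i}{k-i}$, then peel off the endpoints $i=0$ and $i=k$ from the sum. Factoring $(2k-1)!!$ out of the expression and rewriting the remaining factorials as binomial coefficients with half-weights produces the compact bound
\[
M_{2k}(\mathcal{D}_1) \;\leq\; (2k-1)!!\left[\frac{1}{2^k} + \sum_{i=1}^{k-1}\frac{2^i}{(i+1)!}\binom{k}{i}\left(\tfrac{1}{2}\right)^{k-i}\left(\tfrac{1}{2}\right)^i + \frac{1}{(k+1)!}\right].
\]

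The main obstacle, and really the only content beyond bookkeeping, is closing this final combinatorial estimate. I would invoke the elementary inequality $2^i \leq (i+1)!$, valid for every integer $i\geq 0$, which permits replacing $2^i/(i+1)!$ by $1$ inside the middle sum. Completing the binomial sum via $\sum_{i=0}^k\binom{k}{i}(1/2)^k = 1$ then collapses the bracket to $1 + 1/(k+1)! - 1/2^k$. A second application of $2^k \leq (k+1)!$ shows $1/(k+1)! - 1/2^k \leq 0$, so the bracketed quantity is at most $1$. This yields $M_{2k}(\mathcal{D}_1) \leq (2k-1)!! = G_{2k}$, as desired. The delicate point worth watching is the alignment of factorials when factoring $(2k-1)!!$ out in front; once that manipulation is carried out cleanly, the inequality $2^i \leq (i+1)!$ performs all the remaining work.
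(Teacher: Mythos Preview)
Your proposal is correct and follows essentially the same approach as the paper's proof: both reduce to the commutative upper bound via Lemma~\ref{lemma:std_form_contribution_bound}, evaluate the canonical terms with Lemma~\ref{lemma:std_form_contribution}, factor out $(2k-1)!!$, and close with two applications of $2^i \le (i+1)!$. The only cosmetic difference is an index swap in which of $A,B$ carries the double factorial versus the Catalan factor, which is immaterial since the sum runs over all $i$.
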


\subsubsection{Strong Upper Bound of High Moments}
\begin{thm}
The ratio $M_{2k}(\mathcal{D}_1) / G_{2k} $ tends to $0$ as $k \to \infty$.
\begin{proof}
It suffices to consider $k\geq 2$. From \eqref{eq1.2} and Lemma
\ref{lemma:std_form_contribution_bound}, we have that
\begin{align}
     M_{2k}(\mathcal{D}_1) \ &=\ \frac{1}{(2N)^{k+1}}\mathbb{E}
     \left[\Tr[(A+B)^{2k}+(A-B)^{2k}]\right] \nonumber \\
    \ &\leq\ \frac{1}{(2N)^{k+1}}\ 2 \sum\limits_{i=0}^{k}
    \binom{2k}{2i} \mathbb{E}\left[\Tr(A^{2i}B^{2k-2i}) \right].
\end{align}
By Lemma \ref{lemma:std_form_contribution}, we can explicitly
calculate the contributions of $A^{2i}B^{2k-2i}$. The ratio of
the $2k$\textsuperscript{th} moment of $\mathcal{D}_1$ over the $2k$\textsuperscript{th} moment
of Gaussian is then bounded by
\begin{align*}
    \dfrac{M_{2k}(\mathcal{D}_1)}{G_{2k}} &\ \leq\ \dfrac{\dfrac{2}{(2N)^{k+1}}\sum\limits_{i=0}^k\binom{2k}{2i}(2k-2i-1)!!
    \dfrac{1}{i+1}\binom{2i}{i}N^{k+1}}{(2k-1)!!}\\
    &\ = \  \frac{(2k)!}{2^k(2k-1)!!}\sum\limits_{i=0}^k\left[ \frac{(2k)!}{(2i)!(2k-2i)!}(2k-2i-1)!!\frac{1}{i+1}\frac{(2i)!}{i!i!}\right]\\
    &\ = \  \frac{(2k)!!}{2^k}\sum\limits_{i=0}^k \frac{(2k-2i-1)!!}{(2k-2i)!(i+1)!i!}\\
    &\ = \  \frac{1}{2^k}\sum\limits_{i=0}^k 2^i\frac{k(k-1)\cdots(k-i+1)}{(i+1)!i!}\\
    &\ = \  \frac{1}{2^k}\sum\limits_{i=0}^k \frac{2^i}{(i+1)!}\binom{k}{i}. \numberthis \label{upperboundsimplified}
\end{align*}
For $i\geq 2$, we have
\begin{equation}
    2\cdot 3^{i-1}\ \leq\ (i+1)!
\end{equation}
which implies that
\begin{equation}
    \frac{2^i}{(i+1)!}\ \leq\ \left(\frac{2}{3}\right)^{i-1}.
\end{equation}
Then \eqref{upperboundsimplified} becomes
\begin{align*}
    \frac{1}{2^k}\sum\limits_{i=0}^k \frac{2^i}{(i+1)!}\binom{k}{i} &\ \leq\
    \frac{1}{2^k}\sum\limits_{i=0}^k \left[1+k+\sum\limits_{i=2}^k\left(\frac{2}{3}
    \right)^{i-1}\binom{k}{i}
    \right]\\
    &\ = \  \frac{1}{2^k}\left[\sum\limits_{i=0}^k\left(\frac{2}{3}\right)^i
    \binom{k}{i}-\frac{1}{2}\right]\\
    &\ = \  \frac{1}{2^k}\left[\left(\frac{5}{3}\right)^k-\frac{1}{2}\right]\\
    &\ = \  \left(\frac{5}{6}\right)^k-\frac{1}{2^{k+1}}. \numberthis
\end{align*}
As $k\rightarrow\infty$, the above expression goes to zero, which concludes the proof.
\end{proof}
\end{thm}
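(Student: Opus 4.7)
The plan is to sharpen the weak upper bound from Theorem~\ref{thm:weak_upper_bound_even_moments} by extracting genuine geometric decay from the prefactor $2^i/(i+1)!$ that appeared there but was bounded trivially by $1$. In Theorem~\ref{thm:weak_upper_bound_even_moments} the base bound ended up no better than $(2k-1)!!$; by exploiting the factorial in the denominator one can push the ratio below $1$ and in fact down to an exponentially small quantity.

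First I would reproduce the opening steps of the proof of Theorem~\ref{thm:weak_upper_bound_even_moments}. Starting from the diagonalization \eqref{eq:1-disco-diagonalized}, only even powers of $B$ survive in the expansion of $(A+B)^{2k}+(A-B)^{2k}$; apply Lemma~\ref{lemma:std_form_contribution_bound} to dominate each term by the canonical $\mathbb{E}[\Tr(A^{2i}B^{2k-2i})]$, and invoke Lemma~\ref{lemma:std_form_contribution} to evaluate those in closed form. Normalizing by $G_{2k}=(2k-1)!!$ and pushing the same double-factorial identities as in Theorem~\ref{thm:weak_upper_bound_even_moments}, the ratio collapses to
\[ \frac{M_{2k}(\mathcal{D}_1)}{G_{2k}}\ \leq\ \frac{1}{2^k}\sum_{i=0}^{k}\frac{2^i}{(i+1)!}\binom{k}{i}, \]
which is exactly the expression reached mid-proof of the weak bound and serves as our key intermediate inequality.

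Second, I would upgrade the pointwise estimate on $2^i/(i+1)!$. The elementary inequality $2\cdot 3^{i-1}\leq (i+1)!$ for $i\geq 2$ is immediate by induction, since the consecutive-ratio comparison is $3$ on the left against $i+2\geq 4$ on the right. This yields $2^i/(i+1)!\leq (2/3)^{i-1}$ for $i\geq 2$. After separating off the negligible $i=0,1$ contributions (which together add $O(k/2^k)$), substituting this bound and applying the binomial theorem to $\sum_{i=0}^k (2/3)^i\binom{k}{i}=(5/3)^k$ gives
\[ \frac{M_{2k}(\mathcal{D}_1)}{G_{2k}}\ \leq\ \left(\frac{5}{6}\right)^k + O\!\left(\frac{k}{2^k}\right), \]
which tends to zero as $k\to\infty$.

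The main obstacle is conceptual, in Step~2: one must choose the right decay base $c$ in an inequality of the form $2^i/(i+1)!\leq c^{i-1}$. The binomial theorem then produces a factor $((1+c)/2)^k$, and one needs $1+c<2$ while keeping the pointwise inequality valid for all relevant $i$. The choice $c=2/3$ is both achievable (via the clean induction above) and yields the crisp exponent $5/6$; once this base is identified, the rest is routine combinatorial bookkeeping essentially already performed in the proof of Theorem~\ref{thm:weak_upper_bound_even_moments}.
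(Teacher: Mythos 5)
Your proposal is correct and follows essentially the same route as the paper: reduce to the intermediate bound $\frac{1}{2^k}\sum_{i=0}^{k}\frac{2^i}{(i+1)!}\binom{k}{i}$ exactly as in Theorem \ref{thm:weak_upper_bound_even_moments}, then apply $2\cdot 3^{i-1}\leq (i+1)!$ to get $2^i/(i+1)!\leq (2/3)^{i-1}$ and conclude via the binomial theorem that the ratio is at most $(5/6)^k$ plus negligible terms. The only cosmetic difference is that you isolate the $i=0,1$ terms as an explicit $O(k/2^k)$ error, while the paper folds them back into the geometric sum.
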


\subsection{Lower Bound of High Moments}

We know the moments of the limiting spectral measure are bounded
by $(2k-1)!!$, the moments of the Gaussian. By obtaining
a sufficiently large lower bound for the even moments, we show the
limiting spectral measure has unbounded support. If it had bounded
support, say $[-B,B]$, then the $2k^{\text{th}}$ moment
$M_{2k}(\mathcal{D}_1)$ is at most $B^{2k}$, and $\lim_{k\to\infty}\sqrt[2k]{M_{2k}(\mathcal{D}_1)}<\infty$. We show
this is not the case.
\begin{thm}\label{thm:unbounded_support}
The limiting spectral measure of $\mathcal{D}_1(A,B)$ has unbounded support.
\begin{proof}
It suffices to show for all $k \in \mathbb{Z}^+$ that
\begin{equation}
\sqrt[2k]{M_{2k}(\mathcal{D}_1)} \ > \ \frac{1}{2}\sqrt{k}.
\end{equation}
By \eqref{eq1.1},
\begin{equation}
    \sqrt[2k]{M_{2k}(\mathcal{D}_1)}\ >\ \left(\frac{(2k-1)!!}{2^k}
    \right)^\frac{1}{2k} \ = \ \left(\frac{(2k)!}{4^kk!}\right)^{\frac{1}{2k}}.
    \label{unbdd}
\end{equation}
By Stirling's Formula, the right hand side of \eqref{unbdd} is bounded below by
\begin{align*}
     \left(\dfrac{\sqrt{2\pi}(2k)^{2k+1/2}e^{-2k}}{2^{2k}k^{k+1/2}e^{-k+1}}
     \right)^\frac{1}{2k} &\ = \  \left(2\sqrt{\pi}e^{-k-1}k^k\right)^\frac{1}{2k}\\
     &\ \geq\ e^{-\frac{1}{2}-\frac{1}{2k}}\sqrt{k}\\
     &\ \geq\ e^{-\frac{1}{4}}\sqrt{k}\\
     &\ >\ \frac{1}{2}\sqrt{k}. \numberthis
\end{align*}
\end{proof}
\end{thm}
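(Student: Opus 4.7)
The plan is to derive a lower bound on $M_{2k}(\mathcal{D}_1)$ that grows faster than any geometric rate in $k$, then conclude the theorem by contrapositive. Specifically, I will show $M_{2k}(\mathcal{D}_1) \geq (2k-1)!!/2^k$, extract the $2k$-th root via Stirling, and verify the announced strict inequality $\sqrt[2k]{M_{2k}(\mathcal{D}_1)} > \tfrac{1}{2}\sqrt{k}$, which diverges as $k \to \infty$.

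For the first step, I begin from the diagonalization \eqref{eq:1-disco-diagonalized},
\begin{equation*}
    M_{2k}(\mathcal{D}_1, N) \;=\; \frac{1}{(2N)^{k+1}}\,\mathbb{E}\!\left[\Tr((A+B)^{2k}) + \Tr((A-B)^{2k})\right].
\end{equation*}
The map $X \mapsto \Tr(X^{2k})$ on real symmetric matrices is the $2k$-th power of the Schatten $2k$-norm and is therefore convex. Writing $A = \tfrac{1}{2}\big((A+B) + (A-B)\big)$ and applying Jensen's inequality yields the pointwise bound $\Tr((A+B)^{2k}) + \Tr((A-B)^{2k}) \geq 2\,\Tr(A^{2k})$. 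Averaging over the ensemble and invoking the Massey--Miller--Sinsheimer result \cite{MMS} that $\lim_{N \to \infty} \mathbb{E}[\Tr(A^{2k})]/N^{k+1} = (2k-1)!!$, I conclude $M_{2k}(\mathcal{D}_1) \geq (2k-1)!!/2^k = (2k)!/(4^k k!)$.

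For the second step, I apply Stirling. Using the lower bound $(2k)! \geq \sqrt{2\pi}\,(2k)^{2k+1/2}e^{-2k}$ together with the elementary upper bound $k! \leq k^{k+1/2}e^{1-k}$, I obtain $M_{2k}(\mathcal{D}_1) \geq 2\sqrt{\pi}\,k^k\,e^{-k-1}$, and extracting the $2k$-th root gives
\begin{equation*}
    \sqrt[2k]{M_{2k}(\mathcal{D}_1)} \;\geq\; (2\sqrt{\pi})^{1/(2k)}\, e^{-1/2 - 1/(2k)}\,\sqrt{k}.
\end{equation*}
The prefactor of $\sqrt{k}$ is monotone in $k$ and tends to $e^{-1/2} \approx 0.607$, so it exceeds $\tfrac{1}{2}$ for every $k \geq 1$, establishing the desired strict inequality. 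The theorem then follows by contrapositive: if the limiting spectral measure were supported on $[-B,B]$, then $M_{2k}(\mathcal{D}_1) \leq B^{2k}$ would force $\sqrt[2k]{M_{2k}(\mathcal{D}_1)} \leq B$ uniformly in $k$, contradicting the divergence $\sqrt{k}/2 \to \infty$.

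The main subtlety is justifying the lower bound $M_{2k}(\mathcal{D}_1) \geq (2k-1)!!/2^k$, and convexity of the Schatten $2k$-norm is the cleanest tool. An alternative, combinatorial route is to isolate the $\ell=0$ summand in \eqref{eq1.1}, which contributes exactly $2\,\mathbb{E}[\Tr(A^{2k})]/(2N)^{k+1}$, and observe that every remaining summand is non-negative in the large-$N$ limit, since only index configurations pairing all $a$'s and $b$'s survive and each such pairing contributes $+1$. Either route delivers the same explicit constant, after which the Stirling manipulation and contrapositive are routine.
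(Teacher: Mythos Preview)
Your proof is correct and follows essentially the same route as the paper: lower-bound $M_{2k}(\mathcal{D}_1)$ by the PST contribution $(2k-1)!!/2^k$, then apply Stirling to show $\sqrt[2k]{M_{2k}}$ grows like $\sqrt{k}$. Your convexity argument for the inequality $\Tr((A+B)^{2k})+\Tr((A-B)^{2k})\ge 2\Tr(A^{2k})$ is a clean alternative to the paper's appeal to \eqref{eq1.1} (which amounts to isolating the $\ell=0$ term and using non-negativity of the remaining limiting contributions), and your Stirling bookkeeping---retaining the $(2\sqrt{\pi})^{1/(2k)}$ factor rather than discarding it---is in fact more careful than the paper's.
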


\begin{thm}\label{thm:weak_lower_bound_even_moments}
$For \; k\geq1,\;\lim_{N\rightarrow\infty}M_{2k}(\mathcal{D}_1)\geq
S_{2k}$.

\begin{proof}
We first obtain a lower bound for the $2k$\textsuperscript{th} moments by dropping products
of the form $A^IB^J$ from \eqref{eq1.1}. It is clear that what we obtain is
a lower bound, and we want to show the following:
\begin{align}
    M_{2k} &\ \geq\ (2k-1)!! + \frac{1}{k+1} \binom{2k}{k} + 2k\sum_{j=1}^{k-1}
    (2k-2j-1)!! \frac{1}{j+1} \binom{2j}{j} \nonumber \\
    &\ \geq\ \frac{2^k}{k+1} \binom{2k}{k} \label{oneofthelowerbounds}
\end{align}
for all $k \in \N$. Dividing the left-hand side of \eqref{oneofthelowerbounds}
by the right-hand side and simplifying, we have
\begin{equation}\label{oneofthelowerboundsindexshift}
    \begin{split}
        &\frac{k+1}{2^k} \binom{2k}{k}^{-1} \left((2k-1)!! + \frac{1}{k+1}
    \binom{2k}{k} + 2k\sum_{j=1}^{k-1} (2k-2j-1)!! \frac{1}{j+1} \binom{2j}{j}
    \right) \\
    &\ = \  \frac{(k+1)}{2^k} \frac{k! k!}{(2k)!} \frac{(2k-1)!}{2^{k-1} (k-1)!}
    + \frac{1}{2^k}\\
    &\hspace{2cm} + 2k \frac{k+1}{2^k} \frac{k! k!}{(2k)!} \sum_{j=1}^{k-1}
    (2k-2j-1)!! \frac{1}{j+1} \binom{2j}{j}.
    \end{split}
\end{equation}

Simplifying and applying the index shift $j \to k-j-1$
to the sum, \eqref{oneofthelowerboundsindexshift} becomes
\begin{equation}
    \frac{(k+1)!}{ 4^{k} } + \frac{1}{2^k} + \frac{(k+1)! k!}{2^k (2k-1)!}
    \sum_{j=0}^{k-2} (2j+1)!! \frac{1}{k-j} \binom{2(k-j-1)}{k-j-1}.
    \label{simplified}
\end{equation}

We wish to bound \eqref{simplified} below by $1$. Observe
$(k+1)! / 4^k \geq 1$ for all $k \geq 6$ and each of the three terms in
\eqref{simplified} is non-negative, yielding the desired result for $k \geq 6$.
The remaining cases $1 \leq k \leq 5$ are easily verified numerically.



\end{proof}
\end{thm}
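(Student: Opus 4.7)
The plan is to lower-bound $M_{2k}(\mathcal{D}_1)$ by retaining in the expansion \eqref{eq1.1} only the contributions coming from ``single-block'' terms, that is, words whose cyclic class is represented by $A^{2k-2j}B^{2j}$ for some $0\le j\le k$, and discarding the rest. The justification for discarding is that, by the pairing-on-a-circle interpretation from earlier in the section, the limit of $\frac{1}{N^{k+1}}\mathbb{E}[\Tr(\text{word})]$ for any fixed word equals the number of legal pairing configurations, which is a non-negative integer. So dropping any subset of terms preserves a valid lower bound.

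To assemble the bound, I would apply Lemma \ref{lemma:std_form_contribution} directly to get the limit contribution of $A^{2k-2j}B^{2j}$ as $(2k-2j-1)!!\cdot\frac{1}{j+1}\binom{2j}{j}$. The endpoint cases $j=0$ (word $A^{2k}$) and $j=k$ (word $B^{2k}$) each appear once in \eqref{eq1.1}, while for each intermediate $1\le j\le k-1$ the cyclic orbit of $A^{2k-2j}B^{2j}$ has size exactly $2k$ --- a word with a single $A$-run and a single $B$-run admits no nontrivial rotational symmetry --- and every rotate contributes equally by the cyclic property of the trace. Combining this with the normalizing factor $\frac{2}{(2N)^{k+1}}\to\frac{1}{2^k}$ gives the explicit bound
\begin{equation*}
M_{2k}(\mathcal{D}_1)\;\ge\;\frac{1}{2^k}\left[(2k-1)!! + \frac{1}{k+1}\binom{2k}{k} + 2k\sum_{j=1}^{k-1}(2k-2j-1)!!\,\frac{1}{j+1}\binom{2j}{j}\right].
\end{equation*}

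The final step reduces to showing that the bracketed quantity is at least $2^kS_{2k}=\frac{2^k}{k+1}\binom{2k}{k}$. Dividing through by this target, the first summand simplifies to $\frac{(k+1)!}{4^k}$, the second to $\frac{1}{2^k}$, and the remaining sum is a manifestly positive series (say, after the index shift $j\mapsto k-1-j$). Since $(k+1)!/4^k\to\infty$ factorially and in particular exceeds $1$ whenever $k\ge 6$ (by Stirling or by a direct induction), the desired ratio is at least $1$ in that regime; the five remaining cases $k=1,\ldots,5$ are handled by direct numerical verification in the bracketed expression. The main obstacle I anticipate is the cyclic-multiplicity accounting in the reduction step: one must match compositions $(I_1,J_1,\ldots,I_p,J_p)$ appearing in \eqref{eq1.1} against cyclic classes with care, to confirm that the multiplicity is exactly $2k$ (rather than a smaller divisor) for the intermediate $j$, and that the endpoint terms $j=0,k$ are properly isolated.
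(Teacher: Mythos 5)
Your proposal is correct and follows essentially the same route as the paper's proof: retain only the cyclic classes of $A^{2k-2j}B^{2j}$ (with multiplicity $2k$ for $1\le j\le k-1$ and $1$ at the endpoints), evaluate them via Lemma \ref{lemma:std_form_contribution}, and reduce the comparison with $2^k S_{2k}$ to the observation that $(k+1)!/4^k\ge 1$ for $k\ge 6$ plus a numerical check for $1\le k\le 5$. Your explicit justification that dropped terms are non-negative (as counts of legal pairings) and your care with the cyclic-orbit size of $2k$ are exactly the points the paper glosses over with ``it is clear.''
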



\subsection{Weak Convergence}\label{subsec:weak_convergence}

\begin{defi}[Weak Convergence]\label{def:weak_convergence}
A family of probability distributions $\mu_n$ weakly converges to $\mu$ if
and only if for any bounded, continuous function $f$ we have
\begin{equation}
    \lim_{n \to \infty} \int_{-\infty}^{\infty} f(x) \mu_n(dx) \ = \
    \int_{-\infty}^{\infty} f(x) \mu(dx).
\end{equation}
\end{defi}

By Theorem \ref{thm:weak_upper_bound_even_moments}, we know the moments
$M_k(\mathcal{D}_1)$ exist and are finite. To prove we have weak convergence
to the limiting spectral measure we need to show that the variances tend to
$0$. We must show
\begin{equation}
    \lim_{N \to \infty} \left( \mathbb{E}\left[ M_m(\mathcal{D}_1,N)^2 \right]
    - \mathbb{E}\left[ M_m(\mathcal{D}_1,N) \right]^2 \right) \ = \ 0.
\end{equation}
We observe that
\begin{align}\label{eq:square_of_EV}
    \begin{split}
        \mathbb{E}\left[ M_m(\mathcal{D}_1,N) \right]^2 &\ = \  \frac{1}{(2N)^{2k+2}}
    \left( \sum_{s_1=1}^N \cdots \sum_{s_k=1}^N  \mathbb{E}\left[ a_{s_1 s_2}
    \cdots b_{s_k s_1} \right] \right) \\
    &\hspace{2cm} \times \left( \sum_{t_1=1}^N \cdots \sum_{t_k=1}^N \mathbb{E}
    \left[ a_{t_1 t_2}  \cdots b_{t_k t_1} \right] \right)
    \end{split}
\end{align}
and
\begin{align}
    \mathbb{E} \left[ M_m(\mathcal{D}_1,N)^2 \right] &\ = \  \frac{1}{(2N)^{2k+2}}
    \sum_{s_1=1}^N \cdots \sum_{t_k=1}^N
    \mathbb{E} \left[ a_{s_1 s_2}  \cdots b_{s_k s_1} a_{t_1 t_2}  \cdots
    b_{t_k t_1} \right]. \label{eq:EV_of_square}
\end{align}

There are two possibilities: either the entries with subscripts $s$ are
completely disjoint from those with subscripts $t$, or there are ``crossover''
cases wherein entries with subscripts $s$ match to those with subscripts $t$.
In the former case, the entries with subscripts $s$ and the entries with
subscripts $t$ contribute equally to $\mathbb{E}\left[ M_m(\mathcal{D}_1,N)
\right]^2$ and $\mathbb{E} \left[ M_m(\mathcal{D}_1,N)^2 \right]$. However,
the latter case requires estimating the contribution incurred by crossovers;
i.e., $s_{\alpha +1} - s_\alpha = \pm (t_{\beta +1} - t_\beta) + C$, $C \in
\lbrace 0, N -1, 1 - N \rbrace$ if the crossover occurs in the $a$'s, or
$\abs{s_{\alpha +1} - s_\alpha} = \abs{t_{\beta+ 1} - t_\beta}$ if the
crossover occurs in the $b$'s. We assume $m=2k$; the proof is analogous
in the case of $m$ odd. The following two lemmas imply that the variance
tends to $0$.

\begin{lem}\label{lem:contribution_square_of_EV}
The contribution from crossovers in $\mathbb{E}\left[ M_m(\mathcal{D}_1,N)
\right]^2$ is $O_k \left( 1 / N \right)$.

\begin{proof}
We observe from Equation \eqref{eq:square_of_EV} that if anything is unpaired
among the entries with subscripts $s$ or $t$, then the expected value vanishes.
We may then assume that in $\mathbb{E}\left[ M_m(\mathcal{D}_1,N) \right]^2$
all entries are at least paired, and that there is at least one crossover arising
from a common value either between elements $a_{s_\alpha, s_{\alpha +1}}$,
$a_{t_\beta , t_{\beta+1}}$ or between elements $b_{s_\alpha, s_{\alpha+1}}$,
$b_{t_\beta , t_{\beta+1}}$. The maximum number of such possibilities occurs
when all elements with subscripts $s$ are paired among themselves, as are all
elements with subscripts $t$, and only one crossover occurs between a pair
index by $s$'s and a pair indexed by $t$'s.

If a pair of $a$'s cross over, there are two choices of sign and three choices
of the constant $C$, incurring a loss of 1 degree of freedom. If instead a pair
of $b$'s cross over, then the indices are determined up to permutation as $B$
is symmetric. This incurs a loss of 1 degree of freedom if the pair of
$b_{t_{\beta}, t_{\beta+1}}$'s are adjacent, and a loss of 2 degrees of freedom
otherwise. In both cases, there is a loss of at least 1 degree of freedom.
It follows that there are $k+1$ degrees of freedom from the $s$-indexed entries
and at most $k$ degrees of freedom from the $t$-indexed entries, where the loss
of degrees of freedom from the $t$-indexed entries occurs from the crossover.
As triple or higher pairings and two or more crossovers only further erode the
total degrees of freedom, we see these terms give $O_k \left( N^{2k+1}
\right)$, and so contribute $(1 / (2N)^{2k+2})O_k \left( N^{2k+1}
\right) = O_k \left( 1 / N \right)$ to $\mathbb{E}\left[
M_m(\mathcal{D}_1,N) \right]^2$.
\end{proof}
\end{lem}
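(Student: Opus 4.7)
The plan is to expand $\mathbb{E}[M_m(\mathcal{D}_1,N)]^2$ via \eqref{eq:square_of_EV} as a double sum indexed by an $s$-tuple and a $t$-tuple, with the two expectations factoring over independent copies of the ensemble. The first reduction is to apply the mean-zero hypothesis on $p$ within each factor separately: any $a$ or $b$ appearing unpaired among the $s$-indexed entries (resp.\ $t$-indexed entries) forces the summand to vanish. So the surviving terms have all $a$'s and $b$'s at least paired within their own factor. In the baseline regime, where no $s$-index is tied to any $t$-index, each factor independently contributes $k+1$ degrees of freedom (as in the analysis culminating in Lemma \ref{lem:symm_elts_opp_diags}), for a total of $2k+2$ degrees of freedom that exactly matches the normalization $(2N)^{2k+2}$.

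The crossover contribution comes from those summands in which, on top of the internal pairings, some $s$-entry is forced to reference the same matrix element as some $t$-entry. I would analyze the two flavors separately. For $a$-crossovers I would rerun the analysis surrounding \eqref{eq:MMS_system}: the Toeplitz constraint $s_{\alpha+1} - s_\alpha = \pm(t_{\beta+1} - t_\beta) + C$ with $C \in \{0, \pm(N-1)\}$ introduces only $O(1)$ branching in sign and shift but eliminates one linear degree of freedom shared between the $s$- and $t$-indices. For $b$-crossovers I would use that $B$ is real symmetric without further structure, so $b_{s_\alpha,s_{\alpha+1}} = b_{t_\beta,t_{\beta+1}}$ forces $\{s_\alpha,s_{\alpha+1}\}=\{t_\beta,t_{\beta+1}\}$ as unordered pairs and again removes at least one degree of freedom.

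Assembling these bounds, a crossover configuration carries at most $2k+1$ degrees of freedom, so the unnormalized crossover sum is $O_k(N^{2k+1})$, and dividing by $(2N)^{2k+2}$ yields the claimed $O_k(1/N)$. Any simultaneous multiple crossovers, or an entry appearing with multiplicity at least three, can only shrink the degree-of-freedom count further, so those cases are subsumed by the same estimate. The main obstacle I expect is the bookkeeping in the $b$-crossover case: depending on whether the crossover pair of $b$-endpoints is adjacent to another paired block, one can lose either one or two degrees of freedom, and a careful case analysis (analogous to the adjacency discussion for $b$-pairings in the even-moment calculation) is needed to confirm the count never exceeds $2k+1$. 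Handling the shifts $C = \pm(N-1)$ in the Toeplitz system also requires a mild case split to check that such boundary configurations remain non-generic and respect the $O_k(1/N)$ bound.
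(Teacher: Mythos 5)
Your proposal is correct and follows essentially the same route as the paper: reduce to summands where all entries are paired within each factor, observe that any $s$--$t$ crossover (whether through the Toeplitz relation with its $O(1)$ sign/shift branching or through the symmetry of $B$) costs at least one degree of freedom, and conclude that such terms total $O_k(N^{2k+1})$ against the $(2N)^{2k+2}$ normalization. The bookkeeping issues you flag (adjacent versus non-adjacent $b$-pairs, the $C=\pm(N-1)$ boundary cases) are exactly the cases the paper's proof disposes of, and in each the loss is still at least one degree of freedom, so your bound goes through.
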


\begin{lem}\label{lem:contribution_EV_of_square}
The contribution from crossovers in $\mathbb{E} \left[ M_m(\mathcal{D}_1,N)^2
\right]$ is $O_k \left( 1 / N \right)$.

\begin{proof}
We consider two cases: either all $s$-indexed entries and $t$-indexed entries
are paired among themselves and at least one crossover occurs, or there are
unpaired singletons among the $s$-indexed entries and/or the $t$-indexed entries.
In the former case, we may show in a manner analogous to the proof of Lemma \ref{lem:contribution_square_of_EV} that such terms contribute $O_k
\left( 1 / N \right)$ to $\mathbb{E} \left[ M_m(\mathcal{D}_1,N)^2 \right]$.

In the latter case, we assume there are unmatched singletons among the $s$-indexed
entries and/or the $t$-indexed entries. Let there be $w_s > 0$ singletons and
$k - w_s / 2$ pairs in the $s$-indexed entries; similarly, let there be
$w_t > 0$ singletons and $k - w_t / 2$ pairs among the $t$-indexed entries.
Observe that $w_s. w_t$ must be even, as the number of $a$'s and $b$'s in both
the $s$-indexed entries and the $t$-indexed entries must be even, and we have
assumed that all other entries are paired. Let $\mathcal{X}$ denote the number
of crossings; since all $w_s, w_t$ singletons must be paired off (otherwise,
the expected value vanishes), we see that $\mathcal{X} \geq \max\lbrace w_s,
w_t \rbrace$.

Among the $s$-indexed entries, we see that there are $\left( k - w_s / 2
+ 1 \right)$ degrees of freedom from choosing the indices of $k -
w_s / 2$ of the $s$-indexed pairs. The singletons contribute $w_s - 1$ degrees
of freedom, the $-1$ arising from the final singleton's indices being determined
once all other indices are selected. Thus, there are $\left( k - w_s / 2
+ 1 \right) + (w_s - 1)$ degrees of freedom from the $s$-indexed entries. If
$w_t = 0$ then $\mathcal{X} = w_s$ and the degrees of freedom from the $t$-indexed
entries is at most $(k+1) - \mathcal{X}$, since each crossover incurs a loss of
at least one degree of freedom. Then the total degrees of freedom in the case
$w_s \geq 2, w_t = 0$ is
\begin{align}
    \left( k - \frac{w_s}{2} + 1 \right) + (w_s - 1) + (k+1) - \mathcal{X} \
    &= \ 2k + \frac{w_s}{2} + 1 - \mathcal{X} \nonumber \\
    &\leq\ 2k - \frac{w_s}{2} + 1 \nonumber \\
    &\leq\ 2k
\end{align}
from which it follows that such terms contribute $(1 / (2N)^{2k+2})
O_k\left( N^{2k} \right) = O_k \left( 1 / N^2
\right)$ to $\mathbb{E} \left[ M_m(\mathcal{D}_1,N)^2 \right]$.

Now assume that $w_s > 0, w_t > 0$; as before, there are $\left( k - w_s / 2
+ 1 \right) + (w_s - 1)$ from the $s$-indexed entries. From $\mathcal{X}$ crossovers,
we lose at least $\mathcal{X}-1$ degrees of freedom from the $t$-indexed entries,
where $1$ is subtracted from $\mathcal{X}$ in the case that the final singleton of
forced value in the $t$-indexed entries is matched to an existing pair among the
$s$-indexed entries. Then the degrees of freedom from the $t$-indexed entries is
$\left( k - w_t / 2 + 1 \right) + (w_t - 1) - (\mathcal{X}-1)$. The total
degrees of freedom in the case $w_s \geq 2, w_t \geq 2$ is then
\begin{align}
    \begin{split}
    &\left( k - \frac{w_s}{2} + 1 \right) + (w_s - 1) + \left( k - \frac{w_t}{2}
    + 1 \right) + (w_t - 1) - (\mathcal{X}-1)\\
    &\hspace{5cm}\ =\ 2k + 1 + \frac{1}{2}\left( 2\mathcal{X} - (w_s + w_t)  \right)
    \end{split} \nonumber \\
    &\hspace{5cm}\ \leq \ 2k + 1
\end{align}
where the final line follows from $\mathcal{X} \geq \lbrace w_s, w_t \rbrace$.
It follows that such terms contribute $1 / (2N)^{2k+2} O_k\left(
N^{2k+1} \right) = O_k \left( 1 / N \right)$ to $\mathbb{E} \left[
M_m(\mathcal{D}_1,N)^2 \right]$.

Additional crossovers only further erode the available degrees of freedom, and we
note that cases of triple or higher matchings in either the $s$-indexed or
$t$-indexed entries can be reduced to pairs and singletons, thus falling under
the purview of previously considered cases. Thus, there are at most $2k + 1$
degrees of freedom, and all crossover terms contribute $O_k \left(
1 / N \right)$ to $\mathbb{E} \left[ M_m(\mathcal{D}_1,N)^2 \right]$.
\end{proof}
\end{lem}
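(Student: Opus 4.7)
The plan is to bound the crossover contribution in $\mathbb{E}\left[M_m(\mathcal{D}_1,N)^2\right]$ by a degree-of-freedom count and show that every genuine crossover forces the total number of free indices across the combined $s$- and $t$-index sets down to at most $2k+1$. Given the normalization $1/(2N)^{2k+2}$, this immediately yields the bound $O_k(1/N)$.

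First I would expand
\begin{equation*}
\mathbb{E}\!\left[M_m(\mathcal{D}_1,N)^2\right] \;=\; \frac{1}{(2N)^{2k+2}} \sum_{s_1,\ldots,s_k,t_1,\ldots,t_k} \mathbb{E}\!\left[a_{s_1 s_2}\cdots b_{s_k s_1}\,a_{t_1 t_2}\cdots b_{t_k t_1}\right],
\end{equation*}
and use the vanishing of $\mathbb{E}[x]$ under $p$ to restrict to configurations in which every $a$-entry and every $b$-entry appears with multiplicity at least two. Since higher-than-pair multiplicities strictly reduce degrees of freedom, as in the proof of Lemma \ref{oddmoments}, it suffices to analyze pure pair matchings. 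A crossover is any pairing that links an $s$-indexed entry to a $t$-indexed entry.

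I would then split into two cases. Case (a): every $s$-entry is paired with another $s$-entry and every $t$-entry is paired with another $t$-entry, but at least one $s$-pair coincides in value with a $t$-pair. Using the palindromic Toeplitz constraints \eqref{eq:MMS_system} for shared $a$-pairs and the symmetric constraint of Lemma \ref{lem:symm_elts_opp_diags} for shared $b$-pairs, each such coincidence imposes a nontrivial linear relation linking the $s$-indices to the $t$-indices, cutting the naive total of $(k+1)+(k+1)$ degrees of freedom to at most $2k+1$. Case (b): there are $w_s>0$ singletons among the $s$-entries or $w_t>0$ singletons among the $t$-entries (or both), and all singletons must be matched across sides. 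Here I would count, for each side, the free parameters contributed by internal pairs and by the matched-across singletons (the singletons on a side contribute roughly one fewer free index because the last singleton's position is forced once the others are chosen), and then subtract at least one degree of freedom per crossing, using the lower bound $\mathcal{X}\geq\max\{w_s,w_t\}$ on the number of crossings.

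The main obstacle is the careful arithmetic of this degree-of-freedom accounting in case (b), especially when crossovers involve $a$-entries whose matchings carry the three-valued shift constant $C\in\{0,N-1,1-N\}$. The subtlety is that the two non-zero choices of $C$ force specific boundary index values rather than granting genuine extra freedom, so the effective loss per crossing remains at least one. Combining the subcase $w_s>0=w_t$ (where the count gives at most $2k+1-\tfrac{w_s}{2}\leq 2k$ free indices) with the mixed subcase $w_s,w_t>0$ (where $\mathcal{X}\geq\max\{w_s,w_t\}$ trims the combined count to at most $2k+1$) exhausts the possibilities, and the bound $O_k(1/N)$ then follows directly from $O_k(N^{2k+1})/(2N)^{2k+2}$.
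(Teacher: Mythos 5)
Your proposal is correct and follows essentially the same route as the paper's own proof: the same restriction to paired configurations, the same two-case split (internal pairings with value coincidences versus cross-matched singletons), the same degree-of-freedom accounting with the bound $\mathcal{X}\geq\max\{w_s,w_t\}$, and the same subcase arithmetic giving at most $2k$ (when $w_t=0$) or $2k+1$ total degrees of freedom, hence $O_k(1/N)$ after normalizing by $(2N)^{2k+2}$. No substantive differences to report.
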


\begin{thm}\label{thm:1-disco_weak_con}
Let $p$ have mean $0$, variance $1$ and finite higher moments. The measures of
$\mu_{\mathcal{D}_1,2N}(x)$ weakly converge to a universal measure of unbounded
support independently of $p$.
\end{thm}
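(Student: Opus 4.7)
The plan is to establish weak convergence via Markov's method of moments, combining the moment computations developed throughout Section~2 with the variance estimates of Lemmas \ref{lem:contribution_square_of_EV} and \ref{lem:contribution_EV_of_square}. First I would observe that Lemma \ref{oddmoments} together with Theorems \ref{thm:weak_upper_bound_even_moments} and \ref{thm:weak_lower_bound_even_moments} imply that for each fixed $m$, the averaged moment $\mathbb{E}[M_m(\mathcal{D}_1,N)]$ converges as $N\to\infty$ to a finite limit $M_m(\mathcal{D}_1)$, with $0 \le M_{2k}(\mathcal{D}_1) \le (2k-1)!!$ and $M_{2k+1}(\mathcal{D}_1) = 0$. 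By the two crossover lemmas just proved, $\mathrm{Var}[M_m(\mathcal{D}_1,N)] = O_m(1/N)$, so Chebyshev's inequality yields $M_m(\mathcal{D}_1,N) \to M_m(\mathcal{D}_1)$ in probability.

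Next I would verify determinacy of the limiting moment sequence. Using Stirling on the upper bound $M_{2k}(\mathcal{D}_1) \leq (2k-1)!!$, one obtains $M_{2k}(\mathcal{D}_1)^{1/(2k)} = O(\sqrt{k})$, so the Carleman series $\sum_k M_{2k}(\mathcal{D}_1)^{-1/(2k)}$ diverges and the moment problem is determinate. Combined with the in-probability convergence of each moment, the Moment Convergence Theorem (Theorem \ref{thm:momct}) then yields that $\mu_{\mathcal{D}_1,2N}$ converges weakly (in probability) to a unique limiting measure $\mu$ whose moments are precisely $\{M_m(\mathcal{D}_1)\}$. Unbounded support of $\mu$ is immediate from Theorem \ref{thm:unbounded_support}: a measure supported in $[-B,B]$ would force $\limsup_k M_{2k}(\mathcal{D}_1)^{1/(2k)} \le B < \infty$, contradicting the lower bound $\sqrt[2k]{M_{2k}(\mathcal{D}_1)} > \tfrac{1}{2}\sqrt{k}$.

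For universality I would return to the combinatorial moment expansion \eqref{eq1.2} and observe that any summand in which an $a$ or $b$ appears with multiplicity one vanishes under $\mathbb{E}$ since $p$ has mean zero, while any summand in which some entry appears with multiplicity three or more has strictly fewer than $k+1$ degrees of freedom (as in the proof of Lemma \ref{oddmoments}) and hence contributes $O_k(1/N)$. Only pure pair matchings survive in the limit, and on these the only moment of $p$ that enters is $\mathbb{E}[a^2] = \mathbb{E}[b^2] = 1$; the hypothesis of finite higher moments is used solely to bound the negligible higher-multiplicity contributions. Therefore $M_m(\mathcal{D}_1)$, and hence $\mu$, depends on $p$ only through mean zero and unit variance.

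The step I expect to be most delicate is the universality bookkeeping: one must verify carefully that the contributions of triple-or-higher matchings, which could in principle involve higher moments of $p$, are indeed $O_k(1/N)$ uniformly in the pairing structure, even accounting for the two distinct matching regimes for $a$'s (where $C \in \{0, \pm(N-1)\}$ arises from the palindromic Toeplitz structure) versus $b$'s (where $C = 0$). The rest of the argument is a direct assembly of results already established in this section.
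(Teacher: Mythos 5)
Your proposal is correct and follows essentially the same route as the paper: moments exist and are bounded by the Gaussian's, the two crossover lemmas force the variance to zero, Chebyshev plus the Moment Convergence Theorem give weak convergence, the Gaussian bound yields determinacy of the moment problem, and Theorem \ref{thm:unbounded_support} gives unbounded support. The only difference is that you spell out the Carleman condition and the universality bookkeeping explicitly, which the paper leaves implicit.
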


\begin{proof}
By Theorem \ref{thm:weak_upper_bound_even_moments} the moments $M_k$ exist and
are finite. Since $\mathbb{E}\left[ M_k(\mathcal{D}_1, N) \right] \to M_k$, and
since by Lemmas \ref{lem:contribution_square_of_EV},
\ref{lem:contribution_EV_of_square} the variances tend to zero, Chebyshev's inequality
and the Moment Convergence Theorem (Theorem \ref{thm:momct}) give weak convergence.
As Theorem \ref{thm:weak_upper_bound_even_moments}
gives that $M_k$ is bounded above by the the moments of the Gaussian, it follows
that the moments of the disco $\mathcal{D}_1$ uniquely determine a probability
measure, which by Theorem \ref{thm:unbounded_support} has unbounded support.
\end{proof}


\subsection{Almost Sure Convergence}\label{subsec:almost_sure_convergence}

Almost sure convergence follows from showing that for each
non-negative integer $m$ that
\begin{equation}
X_{m;N}(A) \ \to \ X_m(A) \ = \ M_m\left(\mathcal{D}_1\right) \ \
{\rm almost\ surely,}
\end{equation}
and then applying the Moment Convergence Theorem (Theorem \ref{thm:momct}).
The key step in proving this is showing that
\begin{equation}\label{eq:thmprestrongB}
\lim_{N\to\infty} \E\left[ |M_m(\mathcal{D}_1, A, \mathbf{B}, N) - \E
[M_m(\mathcal{D}_1, A, \mathbf{B}, N)]|^4 \right] \ = \
O_m\left(\frac{1}{N^2}\right).
\end{equation}
The proof is completed by three steps. By the triangle inequality,
\begin{align}
\begin{split}
&|M_m(\mathcal{D}_1, A, \mathbf{B}, N) - M_m(\mathcal{D}_1, N)| \\
&\hspace{1cm} \leq\ |M_m(\mathcal{D}_1, A, \mathbf{B}, N) - M_m(
\mathcal{D}_1, N)| + |M_m(\mathcal{D}_1, N) - M_m(\mathcal{D}_1)|.
\end{split}
\end{align}
As the second term tends to zero, it suffices to show the first tends
to zero for almost all $\mathcal{D}_1$.

Chebychev's inequality states that, for any random variable $X$ with
mean zero and finite $\ell$\textsuperscript{th} moment,
\be
\text{Prob}(|X| \ge \gep) \ \le \ \frac{\E[|X|^\ell]}{\gep^\ell}.
\ee
Note $\E[ M_m(\mathcal{D}_1, A, \mathbf{B}, N) - M_m(\mathcal{D}_1, N)] = 0$,
and following \cite{HM} one can show the fourth moment of
$M_m(\mathcal{D}_1, A, \mathbf{B}, N) - M_m(\mathcal{D}_1, N)$ is
$O_m\left(1 / N^2\right)$; we will discuss this step in greater
detail below. Then Chebychev's inequality (with $\ell = 4$) yields
\begin{align}
\begin{split}
&\p_\N(|X_{m;N}(\mathcal{D}_1) - X_m(\mathcal{D}_1)| \geq \gep) \\
&\hspace{3cm}\le\  \frac{\E[|M_m(\mathcal{D}_1, A, \mathbf{B}, N)
- M_m(\mathcal{D}_1, N)|^4]}{\gep^4}
\end{split}\nonumber \\
&\hspace{3cm} \le\  \frac{C_m}{N^2 \gep^4}.
\end{align}

The proof of almost sure convergence is completed by applying the
Borel-Cantelli Lemma and proving \eqref{eq:thmprestrongB}; we sketch
the proof below.

We assume $p$ is even for convenience (though see Remark 6.17 of
\cite{HM}). We expand the expected value on the left hand side of
\eqref{eq:thmprestrongB} into
\begin{equation}
    \begin{split}
        &\mathbb{E}\left[M_{m}(\mathcal{D}_1,N)^4\right]-4\mathbb{E}\left[M_{m}
        (\mathcal{D}_1,N)^3\right]\mathbb{E}\left[M_{m}(\mathcal{D}_1,N)\right] \\
        & \: + 6\mathbb{E}\left[M_{m}(\mathcal{D}_1,N)^2\right] \mathbb{E}\left[M_{m}
        (\mathcal{D}_1,N)\right]^2 -3\mathbb{E}\left[M_{m}(\mathcal{D}_1,N)\right]\mathbb{E}\left[M_{m}
        (\mathcal{D}_1,N)\right]^3.
    \end{split}\label{4thMoment}
\end{equation}
The terms in \eqref{4thMoment} can be expressed in terms of entries of
$\mathcal{D}_1$, which we denote by $c_j$. For example, the first term becomes
\begin{equation}
    \mathbb{E}\left[M_{2m}(\mathcal{D}_1,N)^4\right] \ =\ \frac{1}{(2N)^{4m+4}}\sum\limits_t\sum\limits_u\sum\limits_v\sum\limits_w
    \mathbb{E}\left[c_{ts}c_{us}c_{vs}c_{ws}\right]
\end{equation}
where $c_j$ can either be entries in $A$ or $\mathbf{B}$. The proofs in \S6 of
\cite{HM} can be applied analogously to the disco case as well, as most of the
proofs are simple calculations based on the number of degrees of freedom. The
only difference is that crossovers can occur between entries from $A$ or
$\mathbf{B}$. For crossovers between entries from the PST matrix $A$, the same
results from \S3 of \cite{MMS} hold true. For crossovers between entries from
the RS matrix $\mathbf{B}$, more degrees of freedom will be lost. Thus the
analogues of the proofs in \cite{HM} hold in the disco case as well, which
completes the proof of almost sure convergence.


\section{Combinatorics of Moment Calculations}\label{sec:combo_prob}
Method of moments and the pairing arguments have long been used in random matrix
theory to compute contribution of terms. In particular, the application of
enumerative combinatorics in some involved computation offers a new view and helps
simplify proofs. In the case of the disco of a real RS matrix $B$
and a PST random matrix $A$, the contribution of mixed
terms are characterized by the crossings of pairings of $a$'s and $b$'s in a mixed
product. From \cite{SS} we know that crossings of $b$'s contribute zero, crossings
of $a$ and $b$ contribute zero, and everything else contributes fully. However,
there seems to be no obvious way to compute the contributing pairings by hand as
we go to higher moments. Thus, we seek a new perspective in the hope of getting a
closed form expression.

In this section, we transform the problem of computing the contribution of mixed
products in \eqref{expression: even moment of 1-disco} into a purely combinatorial
problem. Predictably, the problem looks like an analogue of variations of the
Catalan numbers. Although we cannot get a closed form which can be used to compute
an exact bound of the even moments, we obtain a beautiful compact expression that
is computable.

We start by defining some notations.

\begin{defi}
    A tree is a connected undirected graph with no cycles. It is a spanning tree
    of a graph $G$ if it spans $G$ (that is, it includes every vertex of $G$) and
    is a subgraph of $G$ (every edge in the tree belongs to $G$). A spanning tree
    of a connected graph $G$ can also be defined as a maximal set of edges of $G$
    that contains no cycle, or as a minimal set of edges that connect all vertices.
\end{defi}

\begin{defi}
    Given a spanning tree $G=(V,E)$, we define the number of rotational symmetry
    of $G$, denoted $\sigma_r(G)$, to be the number of graph automorphisms of $G$
    preserving the cyclic order of labels. Equivalently, embedding $G$ in a plane,
    and labeling vertices in $V$, $\sigma_r(G)$ gives the number of graphs one can
    obtain by rotating the labels of $V$ on the plane.
\end{defi}

\begin{thm}\label{Combo}
    Consider placing $2\alpha$ red dots and $2\beta$ blue dots onto the
    circumference of a circle. The blue dots all need to be paired with
    blue chords; the red dots all need to be paired with red chords. Blue
    chords cannot intersect any chords, but red chords are allowed to
    intersect other red chords. Let $\mathcal{P}(\alpha, \beta)$ denote
    the total number of ways to configure and pair up the $2\alpha+2\beta$
    dots (order doesn't matter). Then

    \begin{equation}
    \mathcal{P}(\alpha,\beta) = \sum_{\substack{G\text{ spanning tree, }\\
    |V|=\beta+1\\
    deg(v)=d_1, \dots, d_{\beta+1}, v\in V}}\frac{1}{\sigma_r(G)} \sum_{\substack{\gamma_1+\cdots+\gamma_{\beta+1}=\alpha\\ \gamma_s\in
    \mathbb{N},s=1,\dots,\beta+1}} \prod_{s=1}^{\beta+1}(2\gamma_s-1)!!\binom{2\gamma_s
    +d_s-1}{d_s-1}.
    \end{equation}

\begin{proof}
    We first place the $2\beta$ blue dots onto the circumference of a
    circle, and pair them without crossing. The circle is cut into
    $\beta+1$ regions by the blue chords. We construct a mapping $\phi$
    from the set of pairings of $2 \beta$ of the $b$'s (two pairings are
    identical up to rotation) to the set of spanning trees on $\beta+1$
    vertices by constructing a dual graph $G$ as follows: place a vertex
    in each of the $\beta+1$ regions of the circle; for any two vertices,
    draw an edge connecting them if and only if their corresponding regions
    share a blue chord. This way, we get a spanning tree $G$ on $\beta+1$
    vertices. Notice that the degree of each vertex in $G$ is exactly the
    number of arcs on the circle surrounding the corresponding region.

    We show that $\phi$ is a bijection. By the construction above, $\phi$
    is injective. To show surjectivity, given a spanning tree $G$ on
    $\beta+1$ vertices, we know the degrees of each vertex. We enclose
    $G$ by a circle, then draw a blue chord across each edge, with the
    chord's endpoints on the circumference of the circle, and avoiding
    crossing while drawing these chords. We can recover a pairing graph
    representation uniquely from $G$.

    Now consider any spanning tree $G=(V,E)$ with $|V|=\beta+1$. Recovering
    a pairing of $2\beta$ of the $b$'s from $G$, we then put the $a$'s onto the
    circumference. Note that if an $a$ is paired with another in a different
    region formed by the blue chords and the arcs, this pairing of $a$ will
    intersect at least one blue chord, which is not allowed. Therefore all
    the $a$'s have to be paired within their own region, and we must have an
    even number of $a$'s within each region. We count the ways to put $2\alpha$
    of the $a$'s into the $\beta+1$ regions and pair them up within each region. Let $2\gamma_1,\dots,2\gamma_{\beta+1}$ denote the number of $a$'s in each
    region. $\gamma_1+\cdots+\gamma_{\beta+1}=\alpha$. Then we have
    $(2\gamma_s-1)!!$ ways to pair the $a$'s insider the $s$\textsuperscript{th} region.
    Recall that the $s$\textsuperscript{th} region contains $d_s$ arcs. By the result of the
    well-known ``stars and bars'' problem,
    there are
    \begin{equation}
    \binom{2\gamma_s+d_s-1}{d_s-1}
    \end{equation}
    ways to distribute $\gamma_s$ of the $a$'s in the region onto these arcs.
    In total, for a given vector $(\gamma_1,\dots,\gamma_{\beta+1})$, we have
    \begin{equation}
        \prod_{s=1}^{\beta+1}(2\gamma_s-1)!!\binom{2\gamma_s+d_s-1}{d_s-1}
    \end{equation}
    ways to place and pair the $a$'s.

    Summing over all possible vectors $(\gamma_1,\dots,\gamma_{\beta+1})$ with $\gamma_1+\cdots+\gamma_{\beta+1}=\alpha$, we obtain the number of ways to
    pair $2\alpha$ of the $a$'s with the given tree $G$.

    However, we are not precisely counting the desired number. Because we can
    potentially rotate the tree to get an exact same tree, we are over counting
    by the rotational symmetry of $G$, defined above as $\sigma_r(G)$. Dividing
    by $\sigma_r(G)$, we get the desired formula.

\end{proof}
\end{thm}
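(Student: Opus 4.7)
The plan is to reduce the count to a sum over non-crossing blue pairings followed by an independent count of constrained red pairings, organized via planar duality with trees. Since blue chords cannot intersect any other chord, they in particular cannot cross one another, so the $2\beta$ blue dots must form a non-crossing perfect matching. Any such matching cuts the disk into $\beta+1$ regions, and I would establish a bijection $\phi$ with spanning trees on $\beta+1$ vertices by placing one vertex in each region and connecting two vertices exactly when their regions share a blue chord. Connectedness follows because adjacency is generated by chord cuts, and acyclicity follows because each blue chord separates the disk into two pieces. The degree $d_s$ of the $s$-th vertex then equals the number of arcs bounding the corresponding region.

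With the blue matching (equivalently the tree) fixed, any red chord must have both endpoints in the same region, since otherwise it would cross a blue chord. Letting $2\gamma_s$ denote the number of red dots in the $s$-th region, I would use stars-and-bars to count the weak compositions distributing $2\gamma_s$ dots among the $d_s$ arcs bounding that region as $\binom{2\gamma_s+d_s-1}{d_s-1}$, and since red chords may cross each other freely within a region, the number of perfect matchings of those dots is $(2\gamma_s-1)!!$. Multiplying across regions and summing over weak compositions $\gamma_1+\cdots+\gamma_{\beta+1}=\alpha$ gives the total number of admissible red configurations for a fixed tree.

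Finally, summing over all spanning trees overcounts because several distinct labeled trees correspond, under cyclic relabeling of their vertices, to the same unordered pairing of the $2\alpha+2\beta$ dots on the circle. To correct this I would quotient by $\sigma_r(G)$, the number of rotational automorphisms of $G$ preserving cyclic vertex order; by a standard orbit-stabilizer argument each equivalence class of configurations is then counted exactly once. Combining the tree enumeration, the region-wise distribution and pairing counts, and this quotient produces the stated formula.

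The main obstacle will be the rotational-symmetry bookkeeping. The bijection $\phi$ and the stars-and-bars / double factorial counts inside each region are essentially routine, but one must verify carefully that dividing by $\sigma_r(G)$ correctly compensates the orbit sizes under the full cyclic action — in particular, that no additional coincidences arise when the sequence $(\gamma_s)$ happens to be compatible with a nontrivial automorphism of $G$. Pinning down precisely what ``identical up to rotation'' means for the pairings being counted, and checking that the stabilizer of a configuration under cyclic relabeling has order exactly $\sigma_r(G)$ regardless of how the $\gamma_s$ are assigned, will be the subtlest part of the argument.
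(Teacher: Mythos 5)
Your proposal follows essentially the same route as the paper's proof: the planar-duality bijection between non-crossing blue matchings and spanning trees on $\beta+1$ vertices, the region-by-region count via stars-and-bars and $(2\gamma_s-1)!!$, and the final division by $\sigma_r(G)$. The subtlety you flag about verifying that the quotient by $\sigma_r(G)$ exactly compensates the cyclic overcounting is real, but the paper's own proof treats it at the same level of detail, so your plan is faithful to the published argument.
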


\begin{thm}
    The contribution of all the terms in the equivalence class $\left[A^I B^J\right]$
    is given by
    \begin{equation}
        \frac{(I+J)}{2^{(I+J)/2}} \ \mathcal{P} \left(\frac{I}{2}, \ \frac{J}{2} \right).
    \end{equation}
    \begin{proof}
        First note from Lemma \ref{oddmoments} and \eqref{eq1.2} that $I$ and
        $J$ have to be even. Regarding entries in $A$ as red dots and entries
        in $B$ as blue dots, we can then apply Theorem \ref{Combo} to
        $I / 2$ and $J / 2$. By the cyclic property of trace, a
        given term $\prod_{k=1}^p A^{i_k} B^{j_k}$ where $\sum_{k=1}^p
        (i_k+j_k)=I+J$ appears exactly $(I+J)$ times in the expansion
        \eqref{eq1.1}. All of these $(I+J)$ terms correspond to one same
        configuration and pairing of $I$ red dots and $J$ blue dots. By Theorem
        \ref{Combo}, the total number of configurations and pairings of $I$ red
        dots and $J$ blue dots is $\mathcal{P}(I / 2, \ J / 2)$.
        Multiplying, then dividing the product by the normalization factor $2(I+J)/2$,
        we complete the proof.
    \end{proof}
\end{thm}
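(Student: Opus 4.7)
The plan is to transfer the combinatorial count of Theorem \ref{Combo} to the algebraic expansion defining $[A^I B^J]$. First, by Lemma \ref{oddmoments} together with Lemma \ref{lem:symm_elts_opp_diags}, any nonvanishing summand of \eqref{expression: even moment of 1-disco} requires every $A$-entry and every $B$-entry to be paired, with $B$-pairs forced into the opposite-diagonal configuration $i_m = i_{n+1},\ i_{m+1} = i_n$. In particular, I may assume both $I$ and $J$ are even from the outset.

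Next I would set up a dictionary between algebraic and combinatorial data. A term $\prod_{k=1}^p A^{i_k} B^{j_k}$ in the class $[A^I B^J]$ determines a cyclic arrangement of $I$ $A$-letters (red dots) and $J$ $B$-letters (blue dots) around a circle. A contributing index pairing for this product corresponds exactly to a pairing of the colored dots in which no chord crosses a blue chord (forced by the real symmetric structure of $B$ via Lemma \ref{lem:symm_elts_opp_diags}) while red chords may freely cross other red chords (permitted by the PST structure on $A$, which places no Diophantine obstruction on arrangements of $a$'s). Thus the total number of (cyclic arrangement, valid pairing) pairs equals $\mathcal{P}(I/2, J/2)$ by Theorem \ref{Combo}, and each valid pairing contributes $N^{(I+J)/2+1}$ to $\mathbb{E}[\Tr(\,\cdot\,)]$ in the limit, since each pair of equal indices forced by the pairing is a free choice of one element from $\{1,\ldots,N\}$.

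The remaining step is to account for the multiplicity introduced by the sum over ordered compositions in \eqref{eq1.1}. By the cyclic property of the trace, each cyclic arrangement of letters corresponds to $(I+J)$ ordered representatives, one for each rotation of the starting position around the circle. Cyclically symmetric compositions arise from fewer distinct ordered tuples, but their pairing structure is replicated by the same cyclic symmetries, so on the level of (composition, pairing) pairs the combined multiplicity is exactly $(I+J)$ times the count per cyclic class. Combining this with the prefactor $2/(2N)^{(I+J)/2+1} = 1/\bigl(2^{(I+J)/2} N^{(I+J)/2+1}\bigr)$ in \eqref{expression: even moment of 1-disco}, and canceling the $N^{(I+J)/2+1}$ against the degree-of-freedom factor, the limit becomes
\begin{equation}
\frac{1}{2^{(I+J)/2}} \cdot (I+J) \cdot \mathcal{P}\!\left(\frac{I}{2}, \frac{J}{2}\right),
\end{equation}
which is the claimed formula.

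The main obstacle I anticipate is the symmetry bookkeeping in the cyclic-multiplicity step: compositions with nontrivial cyclic stabilizer (e.g.\ $A^2B^2A^2B^2$) contribute fewer ordered representatives, but also admit pairings that are invariant under those same symmetries. Verifying that these two effects cancel exactly is cleanest via a Burnside-type orbit count, but it can also be handled concretely by fixing a marked starting position on the circle and observing that rotating that marker through all $(I+J)$ positions recovers every ordered representative in \eqref{eq1.1} with the correct multiplicity. Once this accounting is done, the rest of the argument is a direct substitution of Theorem \ref{Combo} into the normalized moment expansion.
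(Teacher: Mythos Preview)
Your proposal is correct and follows essentially the same route as the paper: reduce to even $I,J$, identify $A$-entries with red dots and $B$-entries with blue dots, invoke Theorem~\ref{Combo} to count $\mathcal{P}(I/2,J/2)$, pick up the factor $(I+J)$ from the cyclic trace property, and divide by the normalization $2^{(I+J)/2}$. The only notable difference is that you explicitly flag and address the cyclic-stabilizer bookkeeping (terms like $A^2B^2A^2B^2$ with nontrivial rotational symmetry), whereas the paper's proof simply asserts that each term appears $(I+J)$ times; your observation that $\mathcal{P}$ already incorporates the $1/\sigma_r(G)$ weighting so that $(I+J)\mathcal{P}$ recovers the labeled count is the correct resolution, and is in fact implicit in the formula of Theorem~\ref{Combo}.
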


\begin{center}
\begin{table}[ht]
\renewcommand{\arraystretch}{1.5}
\begin{tabular}{|c|c|c|c|c|}\hline
\hspace{0.5cm} $k$ \hspace{0.5cm} & \hspace{0.5cm} $I$ \hspace{0.5cm} & \hspace{0.5cm} $J$ \hspace{0.5cm} & $(I+J) \ \mathcal{P} \left(\frac{I}{2}, \ \frac{J}{2} \right)$ & Contribution of $\left[A^IB^J\right]$ \\ \hline
4 & 2 & 2 & 4 & 1  \\ \hline
6 & 2 & 4 & 15 & 1.875 \\ \hline
6 & 4 & 2 & 21 & 2.625 \\ \hline
8 & 2 & 6 & 56 & 3.5 \\ \hline
8 & 4 & 4 & 112 & 7 \\ \hline
8 & 6 & 2 & 144 & 9 \\ \hline
\end{tabular}
\caption{\label{tab:combo-data}Contribution of $\left[A^IB^J\right]$ for smaller values of $I$ and $J$.}
\end{table}
\end{center}


\section{Bounds on Mixed Products}\label{sec:bounds_mixed_products}

The primary obstacle in proving the convergence of moments of
$\mathcal{D}_d$ is bounding the contribution of the mixed product
terms arising in the expansion of \eqref{eq1.1}, with form
\begin{equation}
    \mathbb{E} \left[ \Tr\left( A^{I_1} B^{J_1} A^{I_2} B^{J_2} \cdots
    A^{I_p} B^{J_p} \right) \right].
\end{equation}
In this section we demonstrate that a generalization of
H\"{o}lder's Inequality in the $p$-Schatten norm allows us to
bound the contribution of arbitrary products of Hermitian
matrices. We begin by defining singular values and
highlight their
key relation to the eigenvalues of Hermitian matrices.

\begin{defi}\label{def:singular_value}
Given an $N \times N$ matrix $X$, let $\alpha_1, \alpha_2, \cdots, \alpha_N$ be the eigenvalues of the $N \times N$ matrix $X^*X$, where $X^*$ is the conjugate transpose of $X$. Then the singular values of $X$ are
\begin{equation}
    \sigma_i(X)\ =\ \sqrt{\alpha_i}.
\end{equation}
In particular, if $X$ is Hermitian, then $\sigma_i(X) = \abs{\lambda_i(X)}$, where $\lambda_i(X)$'s are eigenvalues of $X$.
\end{defi}

\begin{defi}\label{def:p-Schatten}
Given an $N \times N$ matrix $X$, the $p$-Schatten norm is defined by
\begin{equation}
    ||X||_p \ =\ \left(\sum\limits_{i=1}^N\sigma_i^p(X)\right)^{\frac{1}{p}}
\end{equation}
where $\sigma_i(X)$ for $i = 1, 2 \ldots, N$ are the singular values of $X$.
\end{defi}

We state the following well-known result; citations and reproduction
of proof are provided in the appendix.

\begin{thm}{(Generalized H\"older's Trace Inequality.)}
\label{thm:GeneralHolderTrace}
Let $X_1,X_2,\dots,X_k$ be $N\times N$ matrices and $p_1,p_2,\dots,p_k \in \mathbb{R}^+$
such that $\sum_{i=1}^k p_1^{-1} = 1$. Then
\begin{equation}
    \abs{\Tr(X_1 X_2 \cdots X_k)} \ \leq\ \prod_{i=1}^k \norm{X_i}_{p_i}.
\end{equation}

\begin{proof}
See Theorem \ref{thm:GeneralHolderTrace_appendix} in the Appendix.
\end{proof}
\end{thm}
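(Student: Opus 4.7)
The plan is to prove the inequality by induction on $k$, with base case $k=2$ being the classical trace H\"older inequality $|\Tr(XY)| \le \|X\|_p\,\|Y\|_q$ for conjugate exponents $1/p + 1/q = 1$.

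For the base case, I would first establish von Neumann's trace inequality
\begin{equation*}
    |\Tr(XY)| \ \le\ \sum_{i=1}^N \sigma_i(X)\,\sigma_i(Y).
\end{equation*}
This follows by writing the singular value decompositions $X = U\Sigma V^*$ and $Y = U'\Sigma' V'^*$, expanding $\Tr(XY)$ in terms of the singular values and the entries of the unitary matrices $V^*U'$ and $V'^*U$, and invoking a rearrangement/majorization argument to show that the modulus is maximized when singular values are paired in matching decreasing order. The classical scalar H\"older inequality applied to the sequences $\{\sigma_i(X)\}$ and $\{\sigma_i(Y)\}$ then yields the $k=2$ case.

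For the inductive step, assume the result holds for all products of fewer than $k$ matrices with corresponding conjugate exponents. Define $q \in \mathbb{R}^+$ by $1/q = \sum_{i=2}^k 1/p_i$, so that $1/p_1 + 1/q = 1$. Applying the base case to $X_1$ and $Y = X_2\cdots X_k$ gives
\begin{equation*}
    |\Tr(X_1 X_2 \cdots X_k)| \ =\ |\Tr(X_1 Y)| \ \le\ \|X_1\|_{p_1}\,\|X_2\cdots X_k\|_q.
\end{equation*}
The remaining ingredient is the Schatten--H\"older product inequality $\|X_2\cdots X_k\|_q \le \prod_{i=2}^k \|X_i\|_{p_i}$, which reduces via a secondary induction to the two-factor version $\|AB\|_r \le \|A\|_p\,\|B\|_q$ when $1/r = 1/p + 1/q$. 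This two-factor inequality follows from trace duality, namely $\|Z\|_r = \sup_{\|W\|_{r'}=1}|\Tr(ZW)|$ with $r'$ conjugate to $r$, combined with the already-established base case applied to $Z = AB$ and a norm-one witness $W$; the outer induction on the number of factors then closes the argument.

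The main obstacle is proving von Neumann's trace inequality itself; the remainder of the argument is either classical scalar H\"older, an abstract duality manipulation, or routine bookkeeping on the exponents. A cleaner route through von Neumann's inequality is the Ky Fan $k$-norm characterization $\sum_{i=1}^k \sigma_i(X) = \max_{V^*V = I_k} |\Tr(V^*XV)|$, but this merely relocates the difficulty to establishing the variational formula for partial sums of singular values. Either route, once set up, makes the induction on $k$ and the exponent arithmetic $1/p_1 + \cdots + 1/p_k = 1$ largely mechanical.
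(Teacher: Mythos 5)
Your overall architecture is sound and close to the paper's: peel off $X_1$ via the two-factor trace inequality (von Neumann plus scalar H\"older), then control the Schatten norm of the remaining product $X_2\cdots X_k$. The gap is in your justification of the product inequality $\norm{AB}_r \le \norm{A}_p \norm{B}_q$ for $1/r = 1/p + 1/q$. You propose to get it from trace duality, $\norm{AB}_r = \sup_{\norm{W}_{r'}=1}\abs{\Tr(ABW)}$, together with ``the already-established base case.'' But $\Tr(ABW)$ is a product of \emph{three} matrices: grouping it as $\Tr(A\,(BW))$ and applying the two-factor trace inequality leaves you needing $\norm{BW}_{p'} \le \norm{B}_q\norm{W}_{r'}$ with $1/p' = 1/q + 1/r'$, which is exactly the two-factor product inequality you are trying to prove. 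Equivalently, in your induction the $k$-factor trace inequality requires the $(k-1)$-factor product inequality, which via duality requires the $k$-factor trace inequality. The argument is circular as stated.

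The missing ingredient is a singular-value majorization statement for products, which cannot be extracted from duality alone. The paper supplies it as a separate lemma: for any $p>0$, $\sum_{i=1}^N \sigma_i^p(X_1\cdots X_k) \le \sum_{i=1}^N \prod_{j=1}^k \sigma_i^p(X_j)$, proved by induction from Horn's log-majorization $\prod_{i=1}^n \sigma_i(AB) \le \prod_{i=1}^n \sigma_i(A)\sigma_i(B)$ together with the standard fact that log-majorization implies weak majorization of powers (Horn--Johnson). With that lemma in hand, one bounds $\norm{X_2\cdots X_k}_{p_1/(p_1-1)}$ by $\bigl(\sum_i \prod_{j\ge 2}\sigma_i^{p_1/(p_1-1)}(X_j)\bigr)^{(p_1-1)/p_1}$ and finishes with the generalized scalar H\"older inequality applied to the exponents $p_j(p_1-1)/p_1$; no induction on $k$ for the trace inequality itself is then needed. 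Your sketch of von Neumann's inequality and of the $k=2$ base case is fine (the paper simply cites von Neumann), but you should replace the duality step by this majorization lemma, or otherwise prove the two-factor Schatten product inequality by a genuinely independent argument.
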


The preceding result allow us to prove a key bound
on the contribution of arbitrary non-commutative products
of two Hermitian matrices.

\begin{thm}\label{thm:expected_val_trace_bound}
Let $A$, $B$ be $N \times N$ Hermitian matrices and let
$I_i, J_i \in 2\mathbb{Z}^+$ such that $\sum_{i=1}^n I_i = I$ and
$\sum_{i=i}^n J_i = J$, where $I + J = K$. Then
\begin{equation}
    \mathbb{E} \left[\Tr \left( \prod_{i=1}^n A^{I_i} B^{J_i} \right) \right]
    \ \leq\ \mathbb{E}[\Tr (A^K)]^{\frac{I}{K}} \: \mathbb{E}[\Tr (B^K)]^{\frac{J}{K}}.
\end{equation}

\begin{proof}
As $A$, $B$ are Hermitian, it follows that
\begin{equation}
    \sigma_i(A) = \abs{\lambda_i(A)} \qquad\text{and}\qquad \sigma_i(B) \ = \
    \abs{\lambda_i(B)}
\end{equation}
so that
\begin{equation}
    \sigma_i^K (A) = \lambda_i^K (A) \qquad\text{and}\qquad \sigma_i(B) \ = \
    \lambda_i^K(B)
\end{equation}
for even $K = I + J$. Taking $p_i = 1 / K$ in Theorem
\ref{thm:GeneralHolderTrace} and applying the Eigenvalue Trace Lemma give
\begin{align}
    \Tr \left( \prod_{i=1}^n A^{I_i} B^{J_i} \right) \ &\leq\ \left(\sum_{i=1}^N\sigma_i^K(A)\right)^{\frac{I}{K}}\left(\sum_{i=1}^N
    \sigma_i^K(B)\right)^{\frac{J}{K}} \nonumber \\
    \ &=\ \left(\sum_{i=1}^N\lambda_i^K(A)\right)^{\frac{I}{K}}\left(\sum_{i=1}^N
    \lambda_i^K(B)\right)^{\frac{J}{K}} \nonumber \\
    \ &=\ \Tr(A^K)^{\frac{I}{K}}\Tr(B^K)^{\frac{J}{K}}.
\end{align}
Taking the expected value and applying Jensen's Inequality \cite{Jen} yield
\begin{align}
    \mathbb{E} \left[ \Tr \left( \prod_{i=1}^n A^{I_i} B^{J_i} \right) \right]
    \ &\leq\ \mathbb{E} \left[ \Tr \left( A^K \right)^{\frac{I}{K}} \right]
    \mathbb{E}\left[ \Tr \left( B^K \right)^{\frac{J}{K}} \right] \nonumber \\
    \ &\leq\ \mathbb{E} \left[ \Tr \left( A^K \right) \right]^{\frac{I}{K}}
    \mathbb{E}\left[ \Tr \left( B^K \right) \right]^{\frac{J}{K}}.
\end{align}
\end{proof}
\end{thm}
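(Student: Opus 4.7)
The plan is to reduce the bound to two applications of Hölder: first the Generalized Hölder Trace Inequality (Theorem \ref{thm:GeneralHolderTrace}) to handle the deterministic product of matrices, then Hölder on the probability space to pull the expectation inside. The crucial step is the choice of exponents: view $\prod_{i=1}^n A^{I_i} B^{J_i}$ as a chain of $2n$ matrix factors and assign Schatten exponents $p_{A,i} = K/I_i$ to each $A^{I_i}$ factor and $p_{B,i}=K/J_i$ to each $B^{J_i}$ factor. These are admissible because
\begin{equation}
\sum_{i=1}^n \frac{1}{p_{A,i}} + \sum_{i=1}^n \frac{1}{p_{B,i}} \ = \ \frac{1}{K}\left(\sum_{i=1}^n I_i + \sum_{i=1}^n J_i\right) \ = \ \frac{I+J}{K} \ = \ 1.
\end{equation}

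Next I would simplify each Schatten factor. Since $A$ is Hermitian, $\sigma_j(A) = |\lambda_j(A)|$ and so $\sigma_j(A^{I_i}) = |\lambda_j(A)|^{I_i}$; because $K$ is even, $|\lambda_j(A)|^K = \lambda_j(A)^K$. Consequently
\begin{equation}
\|A^{I_i}\|_{K/I_i} \ = \ \Bigl(\sum_j |\lambda_j(A)|^{K}\Bigr)^{I_i/K} \ = \ \Tr(A^K)^{I_i/K},
\end{equation}
and likewise $\|B^{J_i}\|_{K/J_i} = \Tr(B^K)^{J_i/K}$. Multiplying over $i$ telescopes the exponents: $\prod_i \Tr(A^K)^{I_i/K} = \Tr(A^K)^{I/K}$. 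Combined with Theorem \ref{thm:GeneralHolderTrace} this gives the pathwise bound
\begin{equation}
\Bigl|\Tr\Bigl(\prod_{i=1}^n A^{I_i} B^{J_i}\Bigr)\Bigr| \ \leq \ \Tr(A^K)^{I/K}\,\Tr(B^K)^{J/K}.
\end{equation}

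Finally I would take expectations on both sides and apply Hölder's inequality on the probability space with the conjugate exponents $r = K/I$ and $s = K/J$, which satisfy $r^{-1}+s^{-1}=1$. Setting $X = \Tr(A^K)^{I/K}$ and $Y = \Tr(B^K)^{J/K}$ gives $X^r = \Tr(A^K)$ and $Y^s = \Tr(B^K)$, so
\begin{equation}
\mathbb{E}[XY] \ \leq \ \mathbb{E}[\Tr(A^K)]^{I/K}\,\mathbb{E}[\Tr(B^K)]^{J/K},
\end{equation}
as desired. The main subtlety is choosing the Schatten exponents so that (a) they satisfy the summability constraint of the generalized trace Hölder inequality and (b) the resulting norms telescope into clean powers of $\Tr(A^K)$ and $\Tr(B^K)$; this is where the identity $I + J = K$ and the requirement that the $I_i$, $J_i$, and $K$ be even are essential. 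The final probabilistic step is standard, but one could equivalently invoke Jensen's inequality for the concave maps $x \mapsto x^{I/K}$ and $x \mapsto x^{J/K}$ if one assumes independence of $A$ and $B$, which is the relevant setting in this paper.
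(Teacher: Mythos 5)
Your proposal is correct and follows the same basic route as the paper: the Generalized H\"older Trace Inequality gives a pathwise bound $\bigl|\Tr\bigl(\prod_i A^{I_i}B^{J_i}\bigr)\bigr| \le \Tr(A^K)^{I/K}\Tr(B^K)^{J/K}$, and a probabilistic inequality then moves the expectation inside the powers. Two of your choices are worth noting. First, your assignment of Schatten exponents $K/I_i$ to the factors $A^{I_i}$ and $K/J_i$ to the factors $B^{J_i}$, with $\sum_i I_i/K + \sum_i J_i/K = 1$, is the precise way to invoke Theorem \ref{thm:GeneralHolderTrace} here; the paper's phrase ``taking $p_i = 1/K$'' does not satisfy the constraint $\sum_i p_i^{-1}=1$ as written and is best read as shorthand for exactly the exponents you chose, so your version is the more careful one. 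Second, for the final step the paper first factors $\mathbb{E}\bigl[\Tr(A^K)^{I/K}\Tr(B^K)^{J/K}\bigr]$ into a product of expectations --- which silently uses the independence of $A$ and $B$ --- and then applies Jensen to each concave map $x\mapsto x^{I/K}$, $x\mapsto x^{J/K}$; you instead apply scalar H\"older on the probability space with conjugate exponents $K/I$ and $K/J$, which yields the same bound without any independence assumption. Both are valid in the paper's setting (where $A$ and $B$ are independent), but your route is marginally more general and, as you observe, the two are interchangeable here.
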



\section{Bounds on the Moments of \texorpdfstring{$\mathcal{D}_1$}{Lg}}\label{sec:bounds_on_moments}
Given $\mathcal{D}_1$ constructed as in Definition \ref{def: 1-disco},
numerical experiments across various ensembles $\mathcal{E}_A$ and
$\mathcal{E}_B$ suggest that the $k$\textsuperscript{th} moments of
the limiting eigenvalue distribution of $\mathcal{D}_1$ are bounded
by the $k$\textsuperscript{th} moments of the component matrices $A$
and $B_0$, up to a scaling constant $C_k$ dependent on $k$. Define
\begin{equation}\label{eq:1_disco_bound_decomp}
    \hat{A}\ = \ \begin{bmatrix}
    A & 0 \\
    0 & A
    \end{bmatrix}
    \qquad \text{and} \qquad
    \hat{B}\ = \ \begin{bmatrix}
    0 & B_0 \\
    B_0 & 0
    \end{bmatrix}
\end{equation}
so that $\mathcal{D}_1 = \hat{A} + \hat{B}$.

\begin{lem}\label{lem:D1_traces}
For $k \in 2\mathbb{Z}^+$, $\Tr \left[ \hat{A}^k \right] = 2 \Tr
\left[A^k \right]$ and $\Tr \left[ \hat{B}^k \right] =2 \Tr
\left[ B_0^k \right]$.
\end{lem}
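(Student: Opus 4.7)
The plan is to exploit the block structure of $\hat{A}$ and $\hat{B}$ directly; both identities reduce to elementary block matrix computations, and the only real content is the parity of $k$ in the second identity.

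For $\hat{A}$, I would simply observe that since it is block diagonal, powers preserve block structure. By a straightforward induction on $k$,
\begin{equation}
\hat{A}^k \;=\; \begin{bmatrix} A & 0 \\ 0 & A \end{bmatrix}^k \;=\; \begin{bmatrix} A^k & 0 \\ 0 & A^k \end{bmatrix},
\end{equation}
so taking traces and using additivity of the trace over block-diagonal blocks gives $\Tr[\hat{A}^k] = \Tr[A^k] + \Tr[A^k] = 2\Tr[A^k]$. This works for every $k \in \mathbb{Z}^+$, not just the even ones.

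For $\hat{B}$, I would first compute the square to eliminate the off-diagonal structure. A direct block multiplication yields
\begin{equation}
\hat{B}^2 \;=\; \begin{bmatrix} 0 & B_0 \\ B_0 & 0 \end{bmatrix}^2 \;=\; \begin{bmatrix} B_0^2 & 0 \\ 0 & B_0^2 \end{bmatrix}.
\end{equation}
Writing $k = 2m$ (using the hypothesis $k \in 2\mathbb{Z}^+$), I would then apply the same block-diagonal argument used for $\hat{A}$ to $\hat{B}^2$, obtaining
\begin{equation}
\hat{B}^k \;=\; (\hat{B}^2)^m \;=\; \begin{bmatrix} B_0^{2m} & 0 \\ 0 & B_0^{2m} \end{bmatrix} \;=\; \begin{bmatrix} B_0^k & 0 \\ 0 & B_0^k \end{bmatrix},
\end{equation}
and hence $\Tr[\hat{B}^k] = 2\Tr[B_0^k]$.

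There is really no obstacle here; the lemma is essentially bookkeeping about block multiplication. The only subtlety worth flagging is why the evenness hypothesis on $k$ is necessary for the $\hat{B}$ identity: for odd $k$, $\hat{B}^k$ is purely off-diagonal (a block antidiagonal matrix with $B_0^k$ in the off-diagonal positions), so its trace is $0$ rather than $2\Tr[B_0^k]$. The restriction $k \in 2\mathbb{Z}^+$ is therefore sharp for the second equality and accounts for the parity asymmetry between the two statements.
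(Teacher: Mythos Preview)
Your proof is correct and follows essentially the same approach as the paper, which also computes the block form of $\hat{A}^k$ and $\hat{B}^k$ directly and reads off the trace. Your handling of the $\hat{B}$ case via $\hat{B}^k = (\hat{B}^2)^m$ is in fact slightly more careful than the paper's, which asserts the block-diagonal form of $\hat{B}^k$ ``for all $k \ge 2$'' without explicitly isolating the even-$k$ hypothesis.
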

\begin{proof}
The first equality follows from the observation that
\begin{equation}
    \hat{A}^k = \begin{bmatrix}
    A^k & 0 \\
    0 & A^k
    \end{bmatrix}.
\end{equation}
In the second equality, we note that for all $k \geq 2$ that
\begin{equation}
    \hat{B}^k = \begin{bmatrix}
    B_0^k & 0 \\
    0 & B_0^k
    \end{bmatrix}
\end{equation}
from which the claim follows.
\end{proof}

\begin{lem}\label{lem:D_1_Mixed_Products_Bound}
Let $I,J \in 2\mathbb{Z}^+$ and $i_h, j_h \in \mathbb{Z}^+$ such that
$\sum_{h=1}^p i_h = I$, $\sum_{h=1}^p j_h = J$, and $I + J = k$. Then
\begin{equation}
    \lim_{N \to \infty} \frac{1}{(2N)^{\frac{k}{2}+1}} \mathbb{E}
    \left[ \Tr \left( \prod_{h=1}^p \hat{A}^{i_h} \hat{B}^{j_h} \right)
    \right] \ \leq\ \frac{1}{2^{\frac{k}{2}}} M_k(A)^{\frac{I}{k}}
    M_k(B_0)^{\frac{J}{k}}.
\end{equation}

\begin{proof}
Applying Theorem \ref{thm:expected_val_trace_bound} gives
\begin{align}
     \frac{1}{(2N)^{\frac{k}{2}+1}} \mathbb{E} \left[ \Tr \left( \prod_{h=1}^p
     \hat{A}^{i_h} \hat{B}^{j_h} \right) \right]
    \ &\leq\ \frac{1}{(2N)^{\frac{k}{2}+1}} \mathbb{E} \left[ \Tr \left(
    \hat{A}^k \right) \right]^{\frac{I}{k}} \mathbb{E} \left[ \Tr \left(
    \hat{B}^k \right) \right]^{\frac{J}{k}}.
\end{align}
From Lemma \ref{lem:D1_traces} we then have
\begin{align}
    \begin{split}
    & \frac{1}{(2N)^{\frac{k}{2}+1}} \mathbb{E} \left[ \Tr \left( \prod_{h=1}^p
    \hat{A}^{i_h} \hat{B}^{j_h} \right) \right] \\
    &\hspace{2cm} \leq\ \frac{2^{\frac{I}{k}
    + \frac{J}{k}}}{2^{\frac{k}{2}+1}} \left(\frac{\mathbb{E} \left[ \Tr
    \left( A^k \right) \right]}{N^{\frac{k}{2}+1}}\right)^{\frac{I}{k}}
    \left(\frac{\mathbb{E} \left[ \Tr \left( B_0^k \right) \right]}{N^{
    \frac{k}{2}+1}}\right)^{\frac{J}{k}}.
    \end{split}
\end{align}
Noting that $I / k + J / k = 1$ and taking the limit as $N \to \infty$
yield the desired result.
\end{proof}
\end{lem}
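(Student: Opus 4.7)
The plan is to treat this lemma as a direct consequence of the two preceding ingredients: the generalized trace H\"older bound of Theorem \ref{thm:expected_val_trace_bound} and the trace identities of Lemma \ref{lem:D1_traces}. Since $A$ and $B_0$ are real symmetric (hence Hermitian), the $2N \times 2N$ block matrices $\hat{A}$ and $\hat{B}$ defined in \eqref{eq:1_disco_bound_decomp} are also Hermitian, so Theorem \ref{thm:expected_val_trace_bound} applies with $K = I + J = k$ to yield
\begin{equation}
    \mathbb{E}\!\left[\Tr\!\left(\prod_{h=1}^{p}\hat{A}^{i_h}\hat{B}^{j_h}\right)\right] \ \leq \ \mathbb{E}\!\left[\Tr(\hat{A}^{k})\right]^{I/k}\,\mathbb{E}\!\left[\Tr(\hat{B}^{k})\right]^{J/k}.
\end{equation}

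Next I would invoke Lemma \ref{lem:D1_traces} to replace $\Tr(\hat{A}^k)$ with $2\Tr(A^k)$ and $\Tr(\hat{B}^k)$ with $2\Tr(B_0^k)$. Pulling out the two factors of $2$ using $I/k + J/k = 1$ gives
\begin{equation}
    \mathbb{E}\!\left[\Tr\!\left(\prod_{h=1}^{p}\hat{A}^{i_h}\hat{B}^{j_h}\right)\right] \ \leq \ 2\,\mathbb{E}\!\left[\Tr(A^{k})\right]^{I/k}\,\mathbb{E}\!\left[\Tr(B_0^{k})\right]^{J/k}.
\end{equation}

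It then remains to divide by $(2N)^{k/2+1} = 2^{k/2+1}N^{k/2+1}$ and distribute the $N$ correctly: writing $N^{k/2+1} = N^{(I/k)(k/2+1)} \cdot N^{(J/k)(k/2+1)}$ lets the denominator be split as $\bigl(N^{k/2+1}\bigr)^{I/k}\bigl(N^{k/2+1}\bigr)^{J/k}$, matching the exponents on each factor of the right-hand side. The prefactor $2/2^{k/2+1} = 1/2^{k/2}$ is exactly the constant appearing in the target bound. Passing to $N \to \infty$ and using the definitions
\begin{equation}
    M_k(A) \ = \ \lim_{N\to\infty}\frac{\mathbb{E}[\Tr(A^k)]}{N^{k/2+1}}, \qquad M_k(B_0) \ = \ \lim_{N\to\infty}\frac{\mathbb{E}[\Tr(B_0^k)]}{N^{k/2+1}},
\end{equation}
both of which exist by hypothesis on the ensembles $\mathcal{E}_A$ and $\mathcal{E}_B$, yields the claim.

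There is essentially no obstacle here; the combinatorial and analytic content was absorbed into Theorem \ref{thm:expected_val_trace_bound} and Lemma \ref{lem:D1_traces}. The only thing to be careful about is the exponent bookkeeping, in particular verifying that $I/k + J/k = 1$ is what allows both the factor of $2$ in front and the power $N^{k/2+1}$ to redistribute cleanly into the product form. One minor subtlety worth flagging is that Theorem \ref{thm:expected_val_trace_bound} implicitly uses independence of $A$ and $B_0$ when it splits the expectation of a product into a product of expectations; this independence is built into the construction since $B_0$ is drawn from $\mathcal{E}_B$ independently of $A$, so the application is valid in our setting.
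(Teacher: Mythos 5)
Your proposal is correct and follows essentially the same route as the paper: apply Theorem \ref{thm:expected_val_trace_bound} to the Hermitian blocks $\hat{A}$, $\hat{B}$, substitute the trace identities of Lemma \ref{lem:D1_traces}, and use $I/k + J/k = 1$ to collect the factors of $2$ and of $N^{k/2+1}$ before passing to the limit. Your added remark that the expectation-splitting inside Theorem \ref{thm:expected_val_trace_bound} relies on the independence of $A$ and $B_0$ is a correct and worthwhile observation, but it does not change the argument.
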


The preceding lemmas allow us to establish the following bound on the moments
of the limiting eigenvalue distribution of $\mathcal{D}_1$ in terms of the
moments of the component matrices $A$ and $B_0$.

\begin{thm}
Let $A$ and $B_0$ be $N \times N$ random Hermitian matrices chosen from ensembles
$\mathcal{E}_A$ and $\mathcal{E}_B$, respectively, whose limiting eigenvalue
distributions have moments all finite and appropriately bounded. Let $\mathcal{D}_1$
be constructed by $\hat{A} + \hat{B}$ as in \eqref{eq:1_disco_bound_decomp}. Then
\begin{equation}
    2^{1-\frac{k}{2}} \min \left\lbrace M_k(A),M_k(B_0) \right\rbrace \ \leq\
    M_k(\mathcal{D}_1) \leq 2^ {\frac{k}{2}-1} \max \left \lbrace M_k(A),M_k(B_0)
    \right\rbrace.
\end{equation}

\begin{proof}
Applying Lemmas \ref{lem:D1_traces} and \ref{lem:D_1_Mixed_Products_Bound} to the
$k$\textsuperscript{th} moment of $\mathcal{D}_1$ gives
\begin{align}
    \begin{split}
        \lim_{N \to \infty} &\frac{\mathbb{E} \left[ \Tr \left( \mathcal{D}_1^k
        \right) \right]}{(2N)^{\frac{k}{2}+1}} \ = \ \lim_{N \to \infty}
        \frac{\Tr \left( (\hat{A}+\hat{B})^k \right)}{(2N)^{\frac{k}{2}+1}}
    \end{split} \nonumber \\
    \ &=\ \lim_{N \to \infty} \frac{1}{(2N)^{k/2+1}} \mathbb{E}\left[ \Tr \left(
    \hat{A}^k \right) + \sum \Tr\left( \prod_{h=1}^p \hat{A}^{i_h} \hat{B}^{j_h}
    \right)+\Tr \left( \hat{B}^k \right) \right] \nonumber \\
    \ &\leq\ \frac{1}{2^{\frac{k}{2}}}\left( M_k(A)+M_k(B)+\sum_{\substack{I=2\\
    I\text{ even }}}^{k-2}\binom{k}{I}M_k(A)^{\frac{I}{k}}M_k(B)^{\frac{J}{k}}\right).
\end{align}
Let $M = \max \left\lbrace M_k(A), M_k(B) \right\rbrace$. Then
\begin{align}
    \lim_{N \to \infty} \frac{\mathbb{E} \left[ \Tr \left( \mathcal{D}_1^k
    \right) \right]}{(2N)^{\frac{k}{2}+1}} \ &\leq\ \frac{1}{2^{\frac{k}{2}}}
    \left[2M +\sum_{\substack{I=2 \\ I\text{ even }}}^{k-2}\binom{k}{I}M\right] \nonumber \\
    \ &=\ \frac{1}{2^{\frac{k}{2}}} M \sum_{i=0}^{k-1}\binom{k-1}{i} \nonumber \\
    \ &=\ \frac{2^{k-1}}{2^{\frac{k}{2}}} M
\end{align}
which yields the desired upper bound. The lower bound is obtained by observing
\begin{align}
    \lim_{N \to \infty} \frac{ \mathbb{E} \left[ \Tr \left( \mathcal{D}_1^k
    \right) \right]}{(2N)^{\frac{k}{2}+1}} \ &\geq\ \lim_{N \to \infty}
    \frac{1}{(2N)^{\frac{k}{2}+1}} \mathbb{E} \left[ \Tr \left( \hat{A}^k \right)
    +\Tr \left( \hat{B}^k \right) \right] \nonumber \\
    \ &=\  \frac{2}{2^{\frac{k}{2}+1}} \lim_{N \to \infty}  \mathbb{E}
    \left[ \frac{\Tr \left( A^k \right)}{N^{\frac{k}{2}+1}}+ \frac{\Tr
    \left( B^k \right)}{N^{\frac{k}{2}+1}} \right] \nonumber \\
    \ &\geq\ \frac{1}{2^{\frac{k}{2}-1}}\min \left\lbrace M_k(A), M_k(B)
    \right\rbrace.
\end{align}
\end{proof}
\end{thm}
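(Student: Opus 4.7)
The plan is to expand $\mathcal{D}_1^k = (\hat{A} + \hat{B})^k$ as a sum of $2^k$ noncommutative words in $\hat{A}$ and $\hat{B}$, then combine Lemma \ref{lem:D1_traces} (which identifies the pure powers $\hat{A}^k, \hat{B}^k$ with $2A^k$ and $2B_0^k$) with Lemma \ref{lem:D_1_Mixed_Products_Bound} (which controls each mixed word via a H\"older-Schatten bound in terms of $M_k(A)$ and $M_k(B_0)$) to obtain matching upper and lower bounds. Both $\hat{A}$ and $\hat{B}$ are Hermitian, so $\mathcal{D}_1$ is Hermitian, and the moments on both sides of the claim are well-defined real numbers accessible through the Eigenvalue Trace Lemma.

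For the upper bound, I would first observe that any word with an odd total count of $\hat{B}$ factors contributes zero under the trace: since $\hat{A}$ is block-diagonal while $\hat{B}$ is anti-block-diagonal in the natural $2\times 2$ block decomposition, such a word is itself anti-block-diagonal and has vanishing diagonal. Hence only words with even $(I, J) = (I, k - I)$ survive, and there are $\binom{k}{I}$ of them. Lemma \ref{lem:D_1_Mixed_Products_Bound} bounds each surviving word by $\tfrac{1}{2^{k/2}} M_k(A)^{I/k} M_k(B_0)^{J/k}$, which is in turn dominated by $\tfrac{M}{2^{k/2}}$ where $M := \max\{M_k(A), M_k(B_0)\}$, since $I/k + J/k = 1$. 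Summing over even $I$ yields $\sum_{I \text{ even}} \binom{k}{I} = 2^{k-1}$, producing $M_k(\mathcal{D}_1) \le 2^{k/2 - 1} M$.

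For the lower bound, I would drop all mixed product terms from the expansion, retaining only $\E[\Tr(\hat{A}^k)]$ and $\E[\Tr(\hat{B}^k)]$. Applying Lemma \ref{lem:D1_traces} and normalizing by $(2N)^{k/2+1}$, each contributes $\tfrac{1}{2^{k/2}} M_k(A)$ and $\tfrac{1}{2^{k/2}} M_k(B_0)$ respectively in the $N \to \infty$ limit. The elementary inequality $M_k(A) + M_k(B_0) \ge 2\min\{M_k(A), M_k(B_0)\}$ then gives $M_k(\mathcal{D}_1) \ge 2^{1 - k/2} \min\{M_k(A), M_k(B_0)\}$.

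The main obstacle is justifying the omission of mixed terms in the lower bound, which is valid only if those terms have non-negative expected contribution. I would address this by expanding each mixed word at the entry level: the independence of the $A$-entries and $B_0$-entries lets the expectation factor as a product of pure-$a$ and pure-$b$ moments, and the mean-zero assumption on $p$ forces every entry in a nonvanishing term to be paired with another identical entry, so the surviving contributions are products of even-order moments $\E[a^{2m}], \E[b^{2n}] \ge 0$. This sign positivity is what permits the lower-bound dropping; by contrast, the upper bound relies on absolute-value inequalities via the generalized Schatten--H\"older bound of Theorem \ref{thm:expected_val_trace_bound}, and is therefore insensitive to the signs of the individual contributions.
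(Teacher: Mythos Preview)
Your approach is essentially identical to the paper's: expand $(\hat{A}+\hat{B})^k$, control each surviving mixed word by Lemma~\ref{lem:D_1_Mixed_Products_Bound}, bound by $M=\max\{M_k(A),M_k(B_0)\}$ and sum $\sum_{I\text{ even}}\binom{k}{I}=2^{k-1}$ for the upper bound, then retain only the pure powers $\hat{A}^k,\hat{B}^k$ via Lemma~\ref{lem:D1_traces} for the lower bound. You are in fact more careful than the paper in two places---you give an explicit block-structure reason for the vanishing of odd-$J$ words (the paper silently restricts the sum to even $I$), and you flag and justify the non-negativity of the mixed-term contributions needed to drop them in the lower bound, which the paper asserts without comment.
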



\section{Disco of Ensembles with Same Limiting Spectral Measure}\label{sec:LW_disco}

We now consider a simpler instance of the disco matrix
\begin{equation}
    \mathcal{D}_1 \left( X, \mathbf{B} \right) \ =\ 
    \begin{bmatrix}
    X   & B_1 \\
    B_1 & X
    \end{bmatrix}
\end{equation}
where $X$ and $B_1$ are drawn from possibly different ensembles that have the same limiting eigenvalue distribution $\nu$.
We show that the limiting distribution of $\mathcal{D}_1 \left( X, \mathbf{B} \right)$
converges to that of $\mathcal{E}_B$. Analysis of disco matrices $\mathcal{D}_d (A, \mathbf{B})$
where $A$ and the $B_i$'s are drawn from arbitrary pairs of ensembles (with different limiting eigenvalue distributions)
contains sub-problems that may be reduced to this simpler case.


\begin{thm}\label{thm:luntzlara-wang-tweak}
Let $X$ and $B_1$ be $N \times N$ random matrices chosen to have the same
limiting eigenvalue distribution with entries independent and identically distributed from a fixed
distribution function $p(x)$ with mean $0$, variance $1$, and appropriately
bounded finite moments. Then the moments of the limiting eigenvalue
distribution of $\mathcal{D}_1 (X, \mathbf{B})$
satisfy $M_{k} \left( \mathcal{D}_1 (X, \mathbf{B}) \right) = M_{k} ( X ) = M_{k} ( B_1 )$
for all $k \in \mathbb{N}$.
\end{thm}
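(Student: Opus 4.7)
The plan is to exploit the diagonalization identity from equation~\eqref{eq:1-disco_decomp} together with a pairing argument that tracks contributions word by word. Applying the same orthogonal similarity transform used earlier yields
\[
\mathbb{E}[\Tr(\mathcal{D}_1(X,\mathbf{B})^k)] \ =\ \mathbb{E}[\Tr((X+B_1)^k)] + \mathbb{E}[\Tr((X-B_1)^k)],
\]
and the non-commutative expansion $(X\pm B_1)^k = \sum_W (\pm 1)^{\#B_1(W)} W$, summed over length-$k$ words $W$ in the alphabet $\{X,B_1\}$, collapses after addition to
\[
\mathbb{E}[\Tr(\mathcal{D}_1^k)] \ =\ 2\sum_{W:\, \#B_1(W)\text{ even}} \mathbb{E}[\Tr(W)].
\]
This reduces the problem to evaluating each word-trace asymptotically.

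For each word $W$, I would expand $\Tr(W)$ as a sum over cyclic index tuples and analyze $\mathbb{E}[\text{product of entries}]$ by the standard pairing method developed in Section~2. Since $X$ and $B_1$ are independent with mean-zero entries, nonzero contributions in the limit $N\to\infty$ arise only from pairings $\pi$ of $\{1,\ldots,k\}$ in which each pair of positions has the same matrix type in $W$ (each $X$-position paired with an $X$-position, each $B_1$-position with a $B_1$-position). Because $X$ and $B_1$ share the same ensemble structure (by the equal-limiting-distribution hypothesis), the index identification imposed by a $B_1 B_1$ pair is identical in form to that imposed by an $XX$ pair, and the second moments of all entries equal $1$; consequently the leading-order contribution of a type-respecting pairing $\pi$ to $N^{-(k/2+1)}\mathbb{E}[\Tr(W)]$ matches the contribution of $\pi$ to $N^{-(k/2+1)}\mathbb{E}[\Tr(X^k)]$.

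The counting step is then clean. For each pairing $\pi$ of $\{1,\ldots,k\}$ there are exactly $2^{k/2}$ words $W$ compatible with $\pi$ (each of the $k/2$ pairs is independently labeled $XX$ or $B_1 B_1$), and in every such word $\#B_1(W)$ is automatically even. Summing yields
\[
\sum_{W:\,\#B_1(W)\text{ even}} \mathbb{E}[\Tr(W)] \ =\ 2^{k/2}\,\mathbb{E}[\Tr(X^k)] + o\bigl(N^{k/2+1}\bigr),
\]
so $\mathbb{E}[\Tr(\mathcal{D}_1^k)]\sim 2^{k/2+1}\mathbb{E}[\Tr(X^k)]$, and division by $(2N)^{k/2+1}=2^{k/2+1}N^{k/2+1}$ produces $M_k(\mathcal{D}_1)=M_k(X)=M_k(B_1)$. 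The case of odd $k$ reduces to a singleton argument as in Lemma~\ref{oddmoments}, giving $M_k(\mathcal{D}_1)=0=M_k(X)$.

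The main obstacle is verifying that the per-pairing contribution in a mixed word really equals the per-pairing contribution in the pure $X^k$ case. For structured ensembles (Toeplitz, palindromic, Hankel, etc.) the index constraints from each paired matrix and their interaction through indices shared with adjacent matrices in $W$ can be delicate. The argument hinges on these structural constraints depending only on matrix type and not on which of $X$ or $B_1$ is involved, so cross-type constraints never occur; alternatively, one could reformulate via a universality argument showing that $(X\pm B_1)/\sqrt{2}$ has the same limiting spectral distribution as $X$ whenever both lie in the same structural ensemble, allowing direct application of the moment calculation for $X$ to $(X\pm B_1)/\sqrt{2}$.
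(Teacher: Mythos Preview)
Your proposal is correct and follows essentially the same route as the paper: diagonalize via the similarity transform, expand into words with an even number of $B_1$'s, and observe that each contributing pairing of the $2k$ positions admits exactly $2^k$ type-assignments (each pair independently labeled $XX$ or $B_1B_1$), so the total is $2^{k+1}$ times the pure $X^{2k}$ contribution. Your explicit flagging of the key assumption---that a $B_1B_1$ pair imposes the same index constraints as an $XX$ pair---is a bit more careful than the paper, which invokes this tacitly under the ``same limiting distribution'' hypothesis.
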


\begin{proof}
By the Eigenvalue Trace Lemma, we can express the moments of $\mathcal{D}_1 = \mathcal{D}_1 (X, \mathbf{B})$
in terms of the trace of powers of $\mathcal{D}_1$. Let $M_k \left(
\mathcal{D}_1 \right)$ denote the $k$\textsuperscript{th} moment of the
normalized eigenvalue distribution of $\mathcal{D}_1$. The average
$k$\textsuperscript{th} moment of $\mathcal{D}_1$ is
\begin{align}  \label{eq:disAvgkMoment}
M_k(\mathcal{D}_1) \ &=\ \int \cdots \int \frac{1}{2N} \frac{\Tr
(\mathcal{D}_1^k)}{(2N)^{k/2}} \prod p(s_{ij})d(s_{ij})  \nonumber \\
    \ &=\  \frac{1}{(2N)^{k/2+1}} \int \cdots \int \Tr(\mathcal{D}_1^k)
    \prod p(s_{ij})d(s_{ij})
\end{align}
where $s_{ij}$ is the $(i,j)$ entry of $\mathcal{D}_1$. To compute the
trace of $\mathcal{D}_1^k$, we first diagonalize $\mathcal{D}_1$, then
take the $k$\textsuperscript{th} power, giving
\be \label{eq:disPowerTrace}
\mathcal{D}_1^k\ =\ \begin{bmatrix} I/2 & I/2 \\ I/2 & -I/2 \end{bmatrix}
\begin{bmatrix} (X + B_1)^k & 0 \\ 0 & (X - B_1)^k \end{bmatrix}
\begin{bmatrix} I & I \\ I & -I \end{bmatrix}.
\ee
By the properties of the trace function (linearity and basis-independence),
we have that
\begin{align}
\Tr(\mathcal{D}_1^k) &= \Tr( (X +B_1)^k) + \Tr( (X-B_1)^k) \nonumber \\
        \begin{split}
        \ &=\ \sum\limits_{l=0}^{k} \sum_{\substack{i_1+\cdots+i_p=k-l\\
         j_1+\cdots+j_p=l}} \Tr( X^{i_1} B_1^{j_1} \cdots X^{i_p} B_1^{j_p})\\
         &\qquad + \sum_{l=0}^{k} (-1)^l \sum_{\substack{i_1+\cdots+i_p=k-l\\
         j_1+\cdots+j_p=l}} \Tr(X^{i_1} B_1^{j_1} \cdots X^{i_p} B_1^{j_p})
        \end{split}\nonumber \\
         \ &=\ 2\sum_{\substack{l=0 \\ l:\text{even}}}^{k}
         \sum\limits_{\substack{i_1+\cdots+i_p=k-l\\ j_1+\cdots+j_p=l}}
         \Tr(X^{i_1} B_1^{j_1} \cdots X^{i_p} B_1^{j_p}). \label{eq:traceD^k}
\end{align}
Denoting by $x$, $b$ entries of $X$, $B_1$, respectively,
it then suffices to calculate
\begin{multline}
     \int \cdots \int \Tr(X^{i_1} B_1^{j_1} \cdots X^{i_p} B_1^{j_p}) \\
 \ = \ \int \cdots \int \sum_{t_1=1}^{N} \cdots \sum_{t_k=1}^{N} x_{{t_1}{t_2}}
 \cdots x_{t_{i_1}t_{{i_1}+1}} b_{t_{{i_1}+1}t_{{i_1}+2}} \cdots
 b_{t_{i_1+j_1}t_{i_1+j_1+1}} \cdots b_{{t_k}{t_1}}
 \label{eq:traceOfEachTerm}
\end{multline}
where $0 \leq l \leq k$, $l$ is even, $i_1+\cdots+i_p=k-l$, and
$j_1+\cdots+j_p=l$. By the method of moments and the Eigenvalue
Trace Lemma, it suffices to compute the trace of the
$k$\textsuperscript{th} power of $\mathcal{D}_1$. We use the
computation \eqref{eq:disPowerTrace}, \eqref{eq:traceD^k}, and
\eqref{eq:traceOfEachTerm}.

For an odd moment, a term in \eqref{eq:traceOfEachTerm} either
integrates to 0 (if some $x_{ij}$ or $b_{ij}$ are not paired
up) or contributes 0 to the $k$\textsuperscript{th} moment in the
limit (if the entries are paired up, and this will result in a
term with less degrees of freedom). Thus, all the odd moments of
$\mathcal{D}_1$ are 0.

Then we consider the even moments and compute $\Tr(\mathcal{D}_1)$
by \eqref{eq:traceD^k}. Each term of \eqref{eq:traceOfEachTerm}
should be paired up in order to contribute in the limit. Utilizing
the result of the normalized $2k$\textsuperscript{th} moment of
the original matrix ensemble, we have that
\begin{equation}
    M_{2k}(X) \ = \ \dfrac{1}{(2N)^{k+1}} \int \cdots \int \Tr(X^{2k})
    \prod p(x_{ij})d(x_{ij}),
\end{equation}
Since each pair of the entries of $\mathcal{D}_1$ can be chosen from
$X$ or $B_1$, we get $2^k$ distinct configurations of pairing up
the entries of $\mathcal{D}_1$ from one configuration of $X$. So
\begin{align}
    2\int \cdots \int \Tr(\mathcal{D}_1^{2k}) \prod p(s_{ij})d(s_{ij})
    \ &=\ 2^{k+1} \int \cdots \int \Tr(X^{2k}) \prod p(x_{ij})d(x_{ij})
    \nonumber \\
    \ &=\  (2N)^{k+1} M_{2k}(X).
\end{align}
Therefore, the normalized $2k$\textsuperscript{th} moment of
$\mathcal{D}_1$ is
\begin{align}
M_{2k}(\mathcal{D}_1) \ = \ \frac{ (2N)^{k+1} M_{2k}(X)}{(2N)^{k+1}}
\ = \ M_{2k}(X),
\end{align}
which gives the desired result.
\end{proof}

Theorem \ref{thm:luntzlara-wang-tweak} shows convergence of the
expected value of the limiting distribution moments of
$\mathcal{D}_1$ to those of $X$ and $B_1$. To prove that the
limiting eigenvalue distribution of $\mathcal{D}_1$ converges to
that of $X$ and $B_1$ we must further show that the variance
tends to zero, which is sufficient to establish weak convergence
(see Definition \ref{def:weak_convergence}).

By Theorem \ref{thm:luntzlara-wang-tweak}, we know the moments
$M_k \left( \mathcal{D}_1 \right)$ exist
and are finite. To prove we have weak convergence to the limiting
spectral measure we need to show that the variances tend to 0. We
must show
\begin{equation}\label{eq:var_to_zero}
    \lim_{N \to \infty} \left( \mathbb{E} \left[ M_{m} \left(
    \mathcal{D}_1 \right)^2 \right] - \mathbb{E} \left[ M_{m}
    \left( \mathcal{D}_1 \right) \right]^2 \right) \ = \ 0.
\end{equation}
 We assume $m = 2k$; a similar proof works for odd $m$. By equation
 \eqref{eq:1-disco_decomp}, we have
\begin{align}
    \begin{split}
        \mathbb{E} &\left[ M_{2k} \left( \mathcal{D}_1 \right)^2 \right]
    \end{split} \nonumber \\
    \ &=\ \frac{1}{(2N)^{ 2k + 2 }} \mathbb{E} \left[ \left(\Tr
    \left( X^{2k} + \sum_{T=1}^k \prod_{h=1}^p X^{I_h} B_1^{J_h}
    + B_1^{2k} \right) \right)^2 \right] \nonumber \\
    \begin{split}
        \ &=\ \frac{1}{(2N)^{ 2k + 2 }} \Bigg( \mathbb{E}\left[ \Tr
        \left( X^{2k} \right)^2 \right] + \mathbb{E}\left[ \Tr
        \left( B_1^{2k} \right)^2 \right] \\
        &\quad+ 2 \mathbb{E}\left[ \Tr \left( X^{2k} \right) \right]
        \mathbb{E} \left[ \Tr \left( B_1^{2k} \right) \right] +
        \mathbb{E}\left[ \left( \sum_{T=1}^k \Tr \left( \prod_{h=1}^p
        X^{I_h} B_1^{J_h} \right) \right)^2 \right]\\
        &\quad + 2 \mathbb{E}\left[ \Tr \left( X^{2k} \right)
        \sum_{T=1}^k \Tr \left( \prod_{h=1}^p X^{I_h} B_1^{J_h} \right)
        \right]\\
        &\quad+ 2 \mathbb{E}\left[ \Tr \left( B_1^{2k} \right) \sum_{T=1}^k
        \Tr \left( \prod_{h=1}^p X^{I_h} B_1^{J_h} \right) \right] \Bigg)
        \label{eq:ev_of_square}
    \end{split}
\end{align}
and
\begin{align}
        \begin{split}
        \mathbb{E} &\left[ M_{2k} \left( \mathcal{D}_1 \right) \right]^2
    \end{split} \nonumber \\
    \ &=\ \frac{1}{(2N)^{ 2k + 2 }} \mathbb{E} \left[ \Tr \left( X^{2k}
    + \sum_{T=1}^k \prod_{h=1}^p X^{I_h} B_1^{J_h} + B_1^{2k} \right)
    \right]^2 \nonumber \\
    \begin{split}
        \ &=\ \frac{1}{(2N)^{ 2k + 2 }} \Bigg( \mathbb{E}\left[ \Tr
        \left( X^{2k} \right) \right]^2 + \mathbb{E}\left[ \Tr
        \left( B_1^{2k} \right) \right]^2 \\
        &\quad+ 2 \mathbb{E}\left[ \Tr \left( X^{2k} \right) \right]
        \mathbb{E} \left[ \Tr \left( B_1^{2k} \right) \right] + \mathbb{E}
        \left[ \sum_{T=1}^k \Tr \left( \prod_{h=1}^p X^{I_h} B_1^{J_h}
        \right)  \right]^2 \\
        &\quad + 2 \mathbb{E}\left[ \Tr \left( X^{2k} \right) \right]
        \mathbb{E} \left[ \sum_{T=1}^k \Tr \left( \prod_{h=1}^p X^{I_h}
        B_1^{J_h} \right) \right]\\
        &\quad+ 2 \mathbb{E}\left[ \Tr \left( B_1^{2k} \right) \right]
        \mathbb{E} \left[ \sum_{T=1}^k \Tr \left( \prod_{h=1}^p X^{I_h}
        B_1^{J_h} \right) \right] \Bigg) \label{eq:square_of_ev}
    \end{split}
\end{align}
where $\sum_{h=1}^p I_h = 2k - 2T$ and $\sum_{h=1}^p J_h = 2T$. Since
the limiting eigenvalue distributions of $X$ and $B_1$ converge to
that of $\mathcal{E}_B$ by assumption, we have
\begin{align}
    \begin{split}
        \lim_{N\to \infty} \frac{1}{(2N)^{2k+2}} & \left( \mathbb{E}
        \left[ \Tr \left( X^{2k} \right)^2 \right]-  \mathbb{E}
        \left[ \Tr \left( X^{2k} \right) \right]^2 \right)\\
        \ &=\ \frac{1}{2^{2k+2}} \lim_{N\to \infty} \left( \mathbb{E}
        \left[ M_{2k}(X, N)^2 \right]-  \mathbb{E} \left[ M_{2k}(X, N)
        \right]^2 \right)
        \end{split} \nonumber \\
        \ &=\ 0
\end{align}
and similarly
\begin{align}
    \begin{split}
        \lim_{N\to \infty} \frac{1}{(2N)^{2k+2}} & \left( \mathbb{E}
        \left[ \Tr \left( B_1^{2k} \right)^2 \right]-  \mathbb{E}
        \left[ \Tr \left( B_1^{2k} \right) \right]^2 \right)\\
        \ &=\ \frac{1}{2^{2k+2}} \lim_{N\to \infty} \left( \mathbb{E}
        \left[ M_{2k}(B_1, N)^2 \right]-  \mathbb{E} \left[
        M_{2k}(B_1, N) \right]^2 \right)
        \end{split} \nonumber \\
        \ &=\ 0.
\end{align}
To prove \eqref{eq:var_to_zero}, it then suffices to show

\begin{align}\label{eq:first_term_var}
    \begin{split}
        &\lim_{N \to \infty} \frac{1}{(2N)^{2k+2}} \Bigg( \mathbb{E}
        \left[ \left( \sum_{T=1}^k \Tr \left( \prod_{h=1}^p X^{I_h}
        B_1^{J_h} \right) \right)^2 \right] \\
        &\hspace{6cm} - \mathbb{E} \left[  \sum_{T=1}^k \Tr\left(
        \prod_{h=1}^p X^{I_h} B_1^{J_h} \right) \right]^2 \Bigg) \ = \ 0,
    \end{split}
\end{align}

\begin{equation}\label{eq:second_term_var}
    \begin{split}
        &\lim_{N\rightarrow\infty} \frac{1}{(2N)^{2k+2}} \Bigg(
        \mathbb{E}\left[ \Tr \left( X^{2k} \right) \sum_{T=1}^k
        \Tr \left( \prod_{h=1}^p X^{I_h} B_1^{J_h} \right) \right]\\
        &\hspace{4cm}- \mathbb{E} \left[ \Tr \left( X^{2k}\right)
        \right] \mathbb{E} \left[  \sum_{T=1}^k \Tr\left( \prod_{h=1}^p
        X^{I_h} B_1^{J_h} \right) \right] \Bigg) \ = \ 0,
    \end{split}
\end{equation}
and
\begin{equation}\label{eq:third_term_var}
    \begin{split}
        &\lim_{N\rightarrow\infty} \frac{1}{(2N)^{2k+2}} \Bigg( \mathbb{E}
        \left[ \Tr \left( B_1^{2k} \right) \sum_{T=1}^k \Tr \left(
        \prod_{h=1}^p X^{I_h} B_1^{J_h} \right) \right]\\
        &\hspace{4cm} - \mathbb{E} \left[ \Tr \left( B_1^{2k}\right) \right]
        \mathbb{E} \left[  \sum_{T=1}^k \Tr\left( \prod_{h=1}^p X^{I_h}
        B_1^{J_h} \right) \right]\Bigg) \ = \ 0.
    \end{split}
\end{equation}
As before, we denote by $x_{i,j}$, $b_{i,j}$ the entries of $X$, $B_1$,
respectively. Expanding the summands of \eqref{eq:first_term_var} yields
\begin{align}\label{eq:first_term_var_exp1}
    \begin{split}
        &\mathbb{E} \left[ \left( \sum_{T=1}^k \Tr \left( \prod_{h=1}^p
        X^{I_h} B_1^{J_h} \right) \right)^2 \right] \\
        &\hspace{3cm} =\ \sum_{I,J} \sum_{I', J'} \sum_{i} \sum_{j} \mathbb{E}
        \left[ x_{i_1, i_2} \cdots x_{i_{2k}, i_{1}} b_{j_1, j_2} \cdots
        b_{j_{2k}, j_1} \right]
    \end{split}
\end{align}
and
\begin{align}\label{eq:first_term_var_exp2}
    \begin{split}
        &\mathbb{E} \left[  \sum_{T=1}^k \Tr\left( \prod_{h=1}^p X^{I_h}
        B_1^{J_h} \right) \right]^2 \\
        &\hspace{3cm} \ = \ \sum_{I,J} \sum_{i} \mathbb{E}\left[x_{i_1, i_2}
        \cdots x_{i_{2k}, i_{1}}\right] \sum_{I', J'} \sum_{j} \mathbb{E}
        \left[ b_{j_1, j_2} \cdots b_{j_{2k}, j_1} \right].
    \end{split}
\end{align}
Similarly, expanding the summands of \eqref{eq:second_term_var} yields
\begin{align}\label{eq:second_term_var_exp1}
    \begin{split}
        &\mathbb{E}\left[ \Tr \left( X^{2k} \right) \sum_{T=1}^k \Tr
        \left( \prod_{h=1}^p X^{I_h} B_1^{J_h} \right) \right]\\
        &\hspace{3cm} \ = \ \sum_{i} \sum_{I,J} \sum_{j} \mathbb{E}
        \left[x_{i_1,i_2}\dots x_{i_{2k},i_1} b_{j_1,j_2}\cdots
        b_{j_{2k},j_1}\right]
    \end{split}
\end{align}
and
\begin{align}\label{eq:second_term_var_exp2}
    \begin{split}
        &\mathbb{E} \left[ \Tr \left( X^{2k}\right) \right] \mathbb{E}
        \left[  \sum_{T=1}^k \Tr\left( \prod_{h=1}^p X^{I_h} B_1^{J_h}
        \right) \right]\\
        &\hspace{3cm} \ = \ \sum_{i} \mathbb{E}\left[x_{i_1,i_2}\cdots
        x_{i_{2k},i_1}\right] \sum_{I,J} \sum_{j} \mathbb{E}\left[b_{j_1,j_2}
        \cdots b_{j_{2k},j_1}\right].
    \end{split}
\end{align}
The expansion of \eqref{eq:third_term_var} is done similarly. For each pair
of summands, there are two possibilities: if the elements of $X$, $B_1$
indexed by $i$'s are completely disjoint from those indexed by $j$'s, then
these contribute equally to both summands. We are left with estimating the
difference for the crossover cases, where an $i$-indexed element is equal
to a $j$-indexed element. Note there are $2k + 2$ degrees of freedom. The
proof of the analogous result in \cite{HM} is done entirely by counting
degrees of freedom, and showing that at least one degree of freedom is lost
if there is a crossover. The generality of the arguments in \cite{HM} are
immediately applicable here, and yield weak convergence. All that changes
is that in equations \eqref{eq:first_term_var_exp1},
\eqref{eq:first_term_var_exp2}, \eqref{eq:second_term_var_exp1}, and
\eqref{eq:second_term_var_exp2} we now pair entries of two random matrices;
entries from the component matrix $X$ (denoted by $x_{i,j}$) must be paired with
with entries from $X$, and entries from the component matrix $B_1$ (denoted
by $b_{i,j}$) must be paired with entries from $B_1$. Showing that the
contribution from crossovers vanishes in the limit as $N \to \infty$ then
follows from the arguments in \cite{HM}. The claims of \eqref{eq:first_term_var},
\eqref{eq:second_term_var}, and \eqref{eq:third_term_var} then immediately follow.

\begin{thm}\label{thm:weakconv} Let $p$ have mean zero, variance one and
finite higher moments. The measures $\mu_{\mathcal{D}_1,N}(x)$ weakly
converge to the measures $\mu_{X, N}(x) = \mu_{B_1, N}(x)$ of the
component matrices.
\end{thm}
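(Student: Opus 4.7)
The plan is to assemble Theorem \ref{thm:weakconv} directly from the two ingredients already developed in this section: the mean computation of Theorem \ref{thm:luntzlara-wang-tweak}, which gives $\lim_{N\to\infty}\mathbb{E}[M_k(\mathcal{D}_1,N)] = M_k(X) = M_k(B_1)$, and the variance bound implicit in equations \eqref{eq:first_term_var}--\eqref{eq:third_term_var}, which gives $\lim_{N\to\infty}\mathrm{Var}(M_k(\mathcal{D}_1,N)) = 0$. The standard path from ``mean converges, variance vanishes'' to weak convergence of empirical spectral measures goes through Chebyshev plus the Moment Convergence Theorem, so essentially no new combinatorics is required.

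First I would make explicit that the terms $\mathbb{E}[\Tr(X^{2k})^2]-\mathbb{E}[\Tr(X^{2k})]^2$ and $\mathbb{E}[\Tr(B_1^{2k})^2]-\mathbb{E}[\Tr(B_1^{2k})]^2$, after division by $(2N)^{2k+2}$, vanish in the limit, since they are just $2^{-(2k+2)}$ times the variances of $M_{2k}(X,N)$ and $M_{2k}(B_1,N)$, which go to zero by the standing hypothesis that the component ensembles have well-defined limiting spectral measures. This reduces the control of $\mathrm{Var}(M_{2k}(\mathcal{D}_1,N))$ to controlling the three cross-type differences \eqref{eq:first_term_var}, \eqref{eq:second_term_var}, \eqref{eq:third_term_var}. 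These are handled by the degree-of-freedom arguments already sketched: in $\mathbb{E}[\cdots]$ of a product indexed by two independent tuples $i$ and $j$, the only summands which differ from the corresponding product of expectations are the crossover summands in which some $x$-entry from the $i$-block is forced equal to an $x$-entry from the $j$-block (and similarly for $b$'s), and each such forcing costs at least one degree of freedom. Since the total without forcing is exactly $2k+2$, crossover terms contribute $O_k(1/N)$ after normalization. Thus $\mathrm{Var}(M_{2k}(\mathcal{D}_1,N)) = O_k(1/N)$, and in particular tends to zero.

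Next I would apply Chebyshev's inequality: for every fixed $k$ and every $\varepsilon>0$,
\begin{equation}
\mathbb{P}\bigl(|M_k(\mathcal{D}_1,A,\mathbf{B},N) - \mathbb{E}[M_k(\mathcal{D}_1,N)]|\ge\varepsilon\bigr) \ \le\ \frac{\mathrm{Var}(M_k(\mathcal{D}_1,N))}{\varepsilon^2} \ \longrightarrow\ 0.
\end{equation}
Combined with $\mathbb{E}[M_k(\mathcal{D}_1,N)]\to M_k(X)$ from Theorem \ref{thm:luntzlara-wang-tweak}, this shows that $M_k(\mathcal{D}_1,A,\mathbf{B},N)$ converges in probability to the deterministic constant $M_k(X)$. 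Since the limits $\{M_k(X)\}_{k\ge 0}$ are the moments of the common limiting spectral distribution $\nu$ of $X$ and $B_1$ (assumed to be moment-determinate by the finite-moment hypothesis on $p$, via Carleman's condition or the bounded-support case), the Moment Convergence Theorem \ref{thm:momct} applied in its probabilistic form yields that $F^{\mathcal{D}_1/\sqrt{2N}}$ converges weakly (in probability) to $\nu$.

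The only genuinely delicate step is the variance calculation, specifically verifying that every crossover configuration in \eqref{eq:first_term_var}--\eqref{eq:third_term_var} costs at least one degree of freedom relative to the disjoint case. This follows exactly as in \cite{HM}: since $X$-entries must pair with $X$-entries and $B_1$-entries with $B_1$-entries (entries from different component matrices are independent and have mean zero), any equality between an $i$-indexed entry and a $j$-indexed entry produces at least one linear constraint among the $2k+2$ free indices; higher-order matchings and multiple crossovers only further reduce the degree count. I would therefore present this as a reduction: the arguments of \cite{HM} and Lemmas \ref{lem:contribution_square_of_EV}--\ref{lem:contribution_EV_of_square} of this paper apply verbatim, with the only bookkeeping change being to keep track of whether each crossover involves two $X$-entries or two $B_1$-entries, neither of which alters the conclusion.
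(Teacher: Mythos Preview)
Your proposal is correct and follows essentially the same approach as the paper: invoke Theorem \ref{thm:luntzlara-wang-tweak} for convergence of the means, use the variance decomposition \eqref{eq:ev_of_square}--\eqref{eq:square_of_ev} together with the crossover degree-of-freedom counting from \cite{HM} to show the variances vanish, and conclude via Chebyshev and the Moment Convergence Theorem with Carleman's condition ensuring moment-determinacy. The paper's proof is more terse (``standard arguments give weak convergence''), but the underlying structure is identical to what you have written out.
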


\begin{proof} By Theorem \ref{thm:luntzlara-wang-tweak} the moments
$M_k \left( \mathcal{D}_1 \right) = M_k \left( X \right) = M_k
\left( B_1 \right)$ are finite and appropriately bounded.  As $\mathbb{E}
[M_k( \mathcal{D}_1 ,N)] \to M_k \left( \mathcal{D}_1 \right)$ and the
variances tend to zero, standard arguments give weak convergence. As the
$\mu_{X, N}(x)$ satisfies Carleman's Condition, the $M_k \left( X \right)$
uniquely determine a probability measure and establish weak convergence of
$\mu_{\mathcal{D}_1,N}(x)$ to $\mu_{X, N}(x)$.
\end{proof}

Applying Theorems \ref{thm:luntzlara-wang-tweak}, \ref{thm:weakconv}
and inducting on the parameter $d$ of $\mathcal{B}_d$ then gives the
following corollary.

\begin{cor}\label{lem:curly_B_goto_B_dist}
For $i \in \mathbb{N}$, let $\mathbf{B}=\lbrace B_i \rbrace$ be a
sequence of independent $2^{i-1} N \times 2^{i-1}N$ random real symmetric
matrices chosen from the ensemble $\mathcal{E}_B$, with entries independent
and identically distributed from a fixed distribution function $p(x)$
with mean 0, variance 1, and finite higher moments. Then $M_{k}
\left( \mathcal{B}_1 \right) = M_{k} (B_0)$ for all $k \in \mathbb{N}$,
and the limiting eigenvalue distribution of $\mathcal{B}_d$ as
$N \to \infty$ converges weakly to that of $\mathcal{E}_B$.
\end{cor}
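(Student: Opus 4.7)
The plan is strong induction on $d$, leveraging the self-similar block structure of $\mathcal{B}_d$ visible in \eqref{eq:curlyB}. For the base case $d=1$, the matrix $\mathcal{B}_1$ is exactly the $2\times 2$ block matrix
\[
\mathcal{B}_1 \ = \ \begin{bmatrix} B_0 & B_1 \\ B_1 & B_0 \end{bmatrix},
\]
namely the 1-Disco of two independent $N\times N$ matrices $B_0,B_1$ drawn from the common ensemble $\mathcal{E}_B$. Since $B_0$ and $B_1$ \emph{a fortiori} have the same limiting eigenvalue distribution, Theorem \ref{thm:luntzlara-wang-tweak} applies with $X=B_0$, giving $M_k(\mathcal{B}_1)=M_k(B_0)$ for every $k$, and Theorem \ref{thm:weakconv} gives weak convergence of $\mu_{\mathcal{B}_1,2N}$ to the common limiting measure of $\mathcal{E}_B$.

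For the inductive step, assume the claim for all $d'<d$. Inspection of \eqref{eq:curlyB} reveals the self-similar decomposition
\[
\mathcal{B}_d \ = \ \begin{bmatrix} \mathcal{B}_{d-1} & B_d \\ B_d & \mathcal{B}_{d-1} \end{bmatrix},
\]
where the two diagonal blocks are the same copy of the $2^{d-1}N\times 2^{d-1}N$ matrix $\mathcal{B}_{d-1}$ (built from $B_0,B_1,\ldots,B_{d-1}$) and the two off-diagonal blocks are the independent $2^{d-1}N\times 2^{d-1}N$ random matrix $B_d\sim\mathcal{E}_B$. By the inductive hypothesis, the normalized spectral measure of $\mathcal{B}_{d-1}$ converges weakly to the limiting measure of $\mathcal{E}_B$, which also agrees with the limiting measure of $B_d$; moreover $\mathcal{B}_{d-1}$ and $B_d$ are constructed from disjoint collections of independent entries. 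We are therefore in the setting of Theorem \ref{thm:luntzlara-wang-tweak} with $X=\mathcal{B}_{d-1}$ and the off-diagonal block played by $B_d$. The theorem produces $M_k(\mathcal{B}_d)=M_k(\mathcal{B}_{d-1})=M_k(\mathcal{E}_B)$, and Theorem \ref{thm:weakconv} promotes this moment agreement to weak convergence at level $d$.

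The main obstacle is that Theorem \ref{thm:luntzlara-wang-tweak} is stated for an $X$ with i.i.d.\ entries from $p$, whereas $\mathcal{B}_{d-1}$ has repeated and structured entries (the same $B_0,\ldots,B_{d-1}$ appear in both diagonal copies). I would reinspect its proof to verify that the three essential inputs survive: (i) the algebraic diagonalization $\Tr(\mathcal{B}_d^k)=\Tr((\mathcal{B}_{d-1}+B_d)^k)+\Tr((\mathcal{B}_{d-1}-B_d)^k)$ is purely formal and requires no assumption on the entries; (ii) independence of $\mathcal{B}_{d-1}$ from $B_d$ factors every mixed expectation $\mathbb{E}[\Tr(\mathcal{B}_{d-1}^{I_1}B_d^{J_1}\cdots)]$ into its $\mathcal{B}_{d-1}$- and $B_d$-parts; and (iii) the matching of limiting moments $M_k(\mathcal{B}_{d-1})=M_k(B_d)$ supplied by induction is exactly what underpinned the ``$2^{k+1}$ pair-type choices'' counting step that collapsed the original expansion to $2^{k+1}\Tr(X^{2k})$. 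The accompanying variance-vanishing calculation preceding Theorem \ref{thm:weakconv} carries through verbatim, since any crossover between a $\mathcal{B}_{d-1}$-entry and a $B_d$-entry either leaves an unpaired mean-zero singleton (killing the expectation by independence) or costs at least one degree of freedom, producing the required $O_k(1/N)$ decay of the variance.
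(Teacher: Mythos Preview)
Your proof is essentially identical to the paper's: induction on $d$, with the base case handled directly by Theorems~\ref{thm:luntzlara-wang-tweak} and~\ref{thm:weakconv}, and the inductive step obtained by recognizing $\mathcal{B}_d$ as the 1-Disco of $\mathcal{B}_{d-1}$ with $B_d$ and reapplying those same theorems. Your third paragraph goes further than the paper by explicitly flagging that $\mathcal{B}_{d-1}$ does not literally meet the i.i.d.-entry hypothesis of Theorem~\ref{thm:luntzlara-wang-tweak} and sketching why its proof survives under the weaker inputs actually available from the inductive hypothesis; the paper simply asserts ``it follows immediately from Theorem~\ref{thm:luntzlara-wang-tweak}'' without addressing this, so your version is if anything more careful on this point.
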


\begin{proof}
The claim follows from repeated applications of Theorems
\ref{thm:luntzlara-wang-tweak}, \ref{thm:weakconv} and induction on the
parameter $d$. In the base case, let $d=1$. Then the limiting eigenvalue
distribution of
\begin{equation}
    \mathcal{B}_1 \ = \ \begin{bmatrix}
    B_0 & B_1\\
    B_1 & B_0
    \end{bmatrix}
\end{equation}
has all moments equal to $M_k (B_0)$ by Theorem
\ref{thm:luntzlara-wang-tweak} and converges weakly by Theorem
\ref{thm:weakconv}. Now assuming that $\mathcal{B}_d$ has the same normalized
eigenvalue distribution as $\mathcal{E}_B$ as $N \to \infty$, it follows
immediately from Theorem \ref{thm:luntzlara-wang-tweak} that $\mathcal{B}_{d+1}$
has the same limiting normalized eigenvalue distribution, since $B_{d+1}$ is
drawn from the same ensemble $\mathcal{E}_B$, completing the proof.
\end{proof}


\section{Convergence of Moments, Distributions of \texorpdfstring{$\mathcal{D}_d$}{Lg}}

Recall in equations \eqref{eq:curlyC_construction} and \eqref{eq:curlyB}
that the disco matrix $\mathcal{D}_d \left( A, \mathbf{B} \right)$
may be linearly decomposed into the sum of two real symmetric matrices
$\mathcal{B}_d$ and $\mathcal{C}_d$. We may then study powers of the
disco matrix $\mathcal{D}_d^k$ as powers of the sum
$(\mathcal{B}_d + \mathcal{C}_d)^k$. We first show that the moments
of the limiting eigenvalue distribution of $\mathcal{C}_d$ are
finite.


\begin{lem}\label{lem:all_moments_C_finite}
Given random $N \times N$ real symmetric matrices $A$ and $B_0$ chosen from ensemble $\mathcal{E}_A$ and $\mathcal{E}_B$, respectively, let
$C = A - B_0$ be a dependent $N \times N$ random matrix.
Then all moments of $C$ are finite.
\end{lem}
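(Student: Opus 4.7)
The plan is to reduce the question to a bound on the mixed products that we have already controlled in Section \ref{sec:bounds_mixed_products}. Since $C = A - B_0$ is the difference of two $N \times N$ real symmetric (hence Hermitian) matrices, it is itself real symmetric, so by the Eigenvalue Trace Lemma
\begin{equation}
M_k(C, N) \ = \ \frac{1}{N^{\frac{k}{2}+1}} \mathbb{E}\left[ \Tr(C^k) \right] \ = \ \frac{1}{N^{\frac{k}{2}+1}} \mathbb{E}\left[ \Tr\left( (A - B_0)^k \right) \right].
\end{equation}
First I would expand $(A - B_0)^k$ as a non-commutative binomial, obtaining a finite sum (of at most $2^k$ terms) of the form $\pm \Tr(A^{I_1} B_0^{J_1} \cdots A^{I_p} B_0^{J_p})$ with $\sum (I_h + J_h) = k$. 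The triangle inequality reduces the problem to bounding the absolute value of each such summand in expectation.

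Next I would invoke the Generalized H\"older Trace Inequality (Theorem \ref{thm:GeneralHolderTrace}) with exponents $p_i = k$ for every factor; because $A$ and $B_0$ are Hermitian, Definition \ref{def:singular_value} gives $\sigma_i(A)^k = |\lambda_i(A)|^k$ and similarly for $B_0$, so the argument of Theorem \ref{thm:expected_val_trace_bound} goes through verbatim (for odd $k$ one simply uses $|\lambda_i(\cdot)|^k \leq \lambda_i(\cdot)^{2\lceil k/2 \rceil / \cdot}$ or bounds $k$ by $k+1$ to force even exponents). This yields the pointwise estimate
\begin{equation}
\left| \Tr\left( A^{I_1} B_0^{J_1} \cdots A^{I_p} B_0^{J_p} \right) \right| \ \leq\ \Tr\!\left( A^{2\lceil k/2 \rceil} \right)^{\frac{I}{2\lceil k/2 \rceil}} \Tr\!\left( B_0^{2\lceil k/2 \rceil} \right)^{\frac{J}{2\lceil k/2 \rceil}},
\end{equation}
and Jensen's inequality, applied as in the proof of Theorem \ref{thm:expected_val_trace_bound}, moves the expectation inside each factor with at most a constant loss.

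Finally I would insert the hypothesis that $\mathcal{E}_A$ and $\mathcal{E}_B$ have finite limiting moments of every order: this gives $\mathbb{E}[\Tr(A^{2\lceil k/2 \rceil})] = O\!\left( N^{\lceil k/2 \rceil + 1} \right)$ and the same for $B_0$. Combining the exponents, each expected mixed-product trace is $O\!\left( N^{(\lceil k/2 \rceil + 1)(I + J)/(2 \lceil k/2 \rceil)} \right) = O\!\left( N^{\frac{k}{2} + 1 + o(1)} \right)$, so after dividing by $N^{\frac{k}{2}+1}$ and summing the finitely many binomial terms, $M_k(C, N)$ is bounded uniformly in $N$, proving $M_k(C) < \infty$. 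The main (and only) subtlety is the parity bookkeeping when $k$ is odd or when individual $I_h, J_h$ are odd, since Theorem \ref{thm:expected_val_trace_bound} is stated for $I, J \in 2\mathbb{Z}^+$; this is handled by passing from $k$ to $2\lceil k/2 \rceil$ before applying H\"older, which only strengthens the resulting bound.
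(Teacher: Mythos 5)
Your treatment of even $k$ is essentially the paper's proof: expand $(A-B_0)^k$ noncommutatively, bound each mixed trace via Theorem \ref{thm:expected_val_trace_bound} (Generalized H\"older plus Jensen), and use finiteness of the component ensembles' limiting moments to get $\mathbb{E}[\Tr(A^k)],\,\mathbb{E}[\Tr(B_0^k)] = O(N^{k/2+1})$ and hence $M_k(C,N)=O(1)$. The gap is in your odd-$k$ parity fix. The displayed inequality
\begin{equation*}
\left| \Tr\left( A^{I_1} B_0^{J_1} \cdots A^{I_p} B_0^{J_p} \right) \right| \ \leq\ \Tr\left( A^{2\lceil k/2 \rceil} \right)^{\frac{I}{2\lceil k/2 \rceil}} \Tr\left( B_0^{2\lceil k/2 \rceil} \right)^{\frac{J}{2\lceil k/2 \rceil}}
\end{equation*}
is false for odd $k$: take $k=3$, $A=B_0=I_N$, and the term $\Tr(A^2B_0)$; the left side is $N$ while the right side is $N^{1/2}\cdot N^{1/4}=N^{3/4}$. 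The underlying issue is that Schatten norms are \emph{decreasing} in the exponent, $\norm{X}_k \ge \norm{X}_{k+1}$, so ``passing from $k$ to $2\lceil k/2\rceil$'' weakens rather than strengthens the bound; the correct comparison is $\norm{X}_k \le N^{1/k - 1/(k+1)}\norm{X}_{k+1}$, which costs a total factor of $N^{k(1/k-1/(k+1))}=N^{1/(k+1)}$ across the $k$ factors. (Relatedly, $k$ H\"older exponents all equal to $k+1$ sum to $k/(k+1)<1$, so Theorem \ref{thm:GeneralHolderTrace} does not apply directly with those exponents; you must first apply it with exponents $k$ and only then trade up.) The repair is routine --- one checks $\left(\tfrac{k+1}{2}+1\right)\tfrac{k}{k+1} + \tfrac{1}{k+1} = \tfrac{k}{2}+1$, so the final bound is still $O(N^{k/2+1})$ and your conclusion survives --- but as written the step is wrong. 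The paper sidesteps this entirely: for odd $k$ it shows the mixed terms actually \emph{vanish} in the limit by a degrees-of-freedom count (since $k-m$ or $m$ is odd, some matrix entry occurs as a singleton, killing the expectation, or occurs at least three times, losing a degree of freedom), which is both simpler and gives a stronger conclusion than mere boundedness.
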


\begin{proof}
For $k \in \mathbb{N}$, we observe that
\begin{align}
    M_k (C) \ &=\ \lim_{N \to \infty} \frac{1}{N^{\frac{k}{2}+1}}
    \mathbb{E} \left[ \Tr \left( (A - B_0)^k \right) \right] \nonumber \\
    \begin{split}
         \ &=\ \lim_{N \to \infty} \frac{1}{N^{\frac{k}{2}+1}}
         \Bigg( \mathbb{E}\left[ \Tr \left( A^k \right) \right]\\
         &\qquad + \sum_{m=1}^{k-1}\; \sum_{ \substack{i_1 + \cdots
         + i_p = k-m \\ j_1 + \cdots + j_p = m}} \mathbb{E}\left[
         \Tr\left( (-1)^m \prod_{n=1}^p A^{i_n} B_0^{j_n} \right) \right]  \\
         &\qquad + (-1)^k \mathbb{E}\left[ \Tr \left( B_0^k \right)
         \right] \Bigg).
    \end{split}
\end{align}
As $A$ and $B_0$ are drawn from ensembles each with finite moments,
it follows that
\begin{equation}
    \lim_{N \to \infty} \frac{1}{N^{\frac{k}{2}+1}} \mathbb{E} \left[
    \Tr\left( A^k \right) \right] \ <\ \infty \quad \text{and} \quad
    \lim_{N \to \infty} \frac{1}{N^{\frac{k}{2}+1}} \mathbb{E} \left[
    \Tr\left( B_0^k \right) \right]\ <\ \infty.
\end{equation}
It remains only to show
\begin{equation}
    \lim_{N \to \infty} \frac{1}{N^{\frac{k}{2}+1}} \sum_{m=1}^{k-1}\;
    \sum_{ \substack{i_1 + \cdots + i_p \ = \ k-m \\ j_1 + \cdots
    + j_p = m}} \mathbb{E}\left[ \Tr\left( (-1)^m \prod_{n=1}^p A^{i_n}
    B_0^{j_n} \right) \right]\ <\ \infty.
\end{equation}
For $k$ odd, either $\sum_{k=1}^p i_k= k-m$ or $\sum_{k=1}^p j_k = m$
is odd. It follows that at least one element in the product $\Tr
\left( (-1)^m \prod_{n=1}^p A^{i_n} B^{j_n} \right)$ is a singleton or
paired in at least a triple. In the case of a singleton whose expected
value is 0, the entire product collapses to zero; in the latter case of
a triple or higher pairing, losses to degrees of freedom causes the
product to vanish in the limit as $N \to \infty$.

We now consider $k$ even; applying Theorem
\ref{thm:expected_val_trace_bound} gives
\begin{align}
    \begin{split}
        \sum_{ \substack{i_1 + \cdots + i_p = k-m \\ j_1 + \cdots
        + j_p = m}}& \abs{\mathbb{E}\left[ \Tr\left( (-1)^m \prod_{n=1}^p
        A^{i_n} B_0^{j_n} \right) \right]}\\
        \ &=\ \sum_{ \substack{i_1 + \cdots + i_p = k-m \\ j_1 + \cdots
        + j_p = m}} \abs{\mathbb{E}\left[ \Tr\left( \prod_{n=1}^p A^{i_n}
        B_0^{j_n} \right) \right]}
    \end{split} \nonumber \\
    \ &\leq \ \binom{k}{k-m} \mathbb{E} \left[ \Tr\left( A^k \right)
    \right]^\frac{k-m}{k}   \mathbb{E} \left[ \Tr\left( B_0^k \right)
    \right]^\frac{m}{k}
\end{align}
from which it follows that
\begin{align}
    \begin{split}
        \lim_{N \to \infty}& \frac{1}{N^{\frac{k}{2}+1}} \sum_{m=1}^{k-1}
        \; \sum_{ \substack{i_1 + \cdots + i_p \ = \ k-m \\ j_1 + \cdots
        + j_p = m}} \mathbb{E}\left[ \Tr\left( (-1)^m \prod_{n=1}^p A^{i_n}
        B_0^{j_n} \right) \right] \\
        \ &\leq\ \lim_{N \to \infty} \frac{1}{N^{\frac{k}{2}+1}}
        \sum_{m=1}^{k-1} \binom{k}{k-m}  \mathbb{E} \left[ \Tr\left(
        A^k \right) \right]^\frac{k-m}{k}   \mathbb{E} \left[ \Tr\left(
        B_0^k \right) \right]^\frac{m}{k}
    \end{split} \nonumber \\
    \ &=\ \sum_{m=1}^{k-1} \binom{k}{k-m} \left( \lim_{N \to \infty}
    \left( \frac{\mathbb{E} \left[ \Tr\left( A^k \right) \right]}{N^{\frac{k}{2}+1}}
    \right)^\frac{k-m}{k}  \left( \frac{\mathbb{E} \left[ \Tr\left( B_0^k \right)
    \right]}{N^{\frac{k}{2}+1}} \right)^\frac{m}{k}  \right) \nonumber \\
    \ &=\ \sum_{m=1}^{k-1} \binom{k}{k-m} M_k \left( A^k \right)^\frac{k-m}{k}
    M_k \left( B_0^k \right)^\frac{m}{k} \nonumber \\
    \ &<\ \infty \label{eq:upper_bound_mixed_T_terms}
\end{align}
completing the proof for even $k$.
\end{proof}

As the moments of the limiting eigenvalue distribution of $\mathcal{B}_d$
are finite by assumption, the preceding lemma shows that for finite $d$,
the moments of $\mathcal{D}_d = \mathcal{B}_d + \mathcal{C}_d$ are finite.
We now prove the cornerstone result concerning $d$-disco matrices
$\mathcal{D}_d (A, \mathbf{B})$. We first show that the moments of
$\mathcal{D}_d$ converge geometrically to the moments of the $\mathbf{B}$
ensemble as $d \to \infty$.


\begin{thm}\label{thm:inf_disco_convergence_of_moments}
The moments of $\mathcal{D}_d (A, \mathbf{B} )$ converges with order
$O\left(2^{-d} \right)$ to the moments of the limiting
normalized eigenvalue distribution of $\mathcal{E}_B$ as $N \to \infty$
and $d \to \infty$.
\end{thm}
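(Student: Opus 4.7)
The plan is to decompose $\mathcal{D}_d = \mathcal{B}_d + \mathcal{C}_d$ as in \eqref{eq:curlyB} and \eqref{eq:curlyC_construction} and expand $\mathbb{E}[\Tr(\mathcal{D}_d^k)]$ into the pure term $\mathbb{E}[\Tr(\mathcal{B}_d^k)]$ plus a finite sum of mixed products $\mathbb{E}[\Tr(\prod_{h=1}^p \mathcal{B}_d^{i_h} \mathcal{C}_d^{j_h})]$ indexed by compositions with $I + J = k$ and $J \geq 1$. Corollary \ref{lem:curly_B_goto_B_dist} already ensures that the pure term, once normalized by $(2^dN)^{k/2+1}$, converges to $M_k(\mathcal{E}_B)$ as $N \to \infty$, so the theorem reduces to showing every mixed contribution is $O_k(2^{-d})$ after the same normalization.

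The next step is to pin down the scaling of $\mathcal{C}_d$. Since it is block diagonal with $2^d$ copies of $C = A - B_0$, direct computation gives $\Tr(\mathcal{C}_d^k) = 2^d \Tr(C^k)$, hence $M_k(\mathcal{C}_d) = 2^{-dk/2} M_k(C)$ with $M_k(C)$ finite by Lemma \ref{lem:all_moments_C_finite}. Applying the generalized H\"older trace inequality of Theorem \ref{thm:expected_val_trace_bound} to any mixed product then yields
\[
\frac{1}{(2^dN)^{k/2+1}}\, \mathbb{E}\!\left[\Tr\!\left(\prod_{h=1}^p \mathcal{B}_d^{i_h} \mathcal{C}_d^{j_h}\right)\right] \,\leq\, M_k(\mathcal{B}_d, N)^{I/k} \cdot \left(\frac{M_k(C, N)}{2^{dk/2}}\right)^{J/k},
\]
which in the limit $N \to \infty$ is $O_k(2^{-dJ/2})$. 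For every mixed term with $J \geq 2$ this is already $O_k(2^{-d})$, and since the number of distinct mixed compositions is bounded by $2^k$, their total contribution is $O_k(2^{-d})$.

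The main obstacle is that when $J = 1$ the above H\"older bound only produces $O_k(2^{-d/2})$, which is not sharp enough. To upgrade this to $O_k(2^{-d})$, I would split $\mathcal{C}_d = \mathcal{A} - \tilde{\mathcal{B}}$, where $\mathcal{A}$ and $\tilde{\mathcal{B}}$ are the block diagonal matrices consisting of $2^d$ copies of $A$ and $B_0$ respectively. The $\mathcal{A}$-contribution $\mathbb{E}[\Tr(\mathcal{B}_d^{k-1}\mathcal{A})]$ vanishes identically, because $A$ is independent of every entry of $\mathcal{B}_d$ (which is built only from the $B_i$) and contributes a single unpaired mean-zero factor in each summand. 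For the residual piece, the block diagonal structure yields $\mathbb{E}[\Tr(\mathcal{B}_d^{k-1}\tilde{\mathcal{B}})] = \sum_{s=1}^{2^d} \mathbb{E}[\Tr((\mathcal{B}_d^{k-1})_{s,s}\, B_0)]$, and a pairing/walk argument in the spirit of Section 2 shows that constraining the last edge to lie in one of the $2^d$ innermost $N \times N$ diagonal blocks -- a $2^{-d}$ fraction of all positions of $\mathcal{B}_d$ -- reduces the sum of contributing walks by exactly that factor relative to $\mathbb{E}[\Tr(\mathcal{B}_d^k)]$; the case $k=2$ already verifies this directly since $\sum_s \Tr((\mathcal{B}_d)_{s,s} B_0) = 2^d \Tr(B_0^2)$. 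Assembling the $J=0$, $J=1$, and $J \geq 2$ contributions gives $|M_k(\mathcal{D}_d, N) - M_k(\mathcal{B}_d, N)| = O_k(2^{-d})$, and passing to the limit in $N$ together with Corollary \ref{lem:curly_B_goto_B_dist} completes the theorem.
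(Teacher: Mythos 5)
Your overall skeleton---the decomposition $\mathcal{D}_d=\mathcal{B}_d+\mathcal{C}_d$, the identity $\Tr(\mathcal{C}_d^k)=2^d\Tr(C^k)$ giving $M_k(\mathcal{C}_d)=2^{-dk/2}M_k(C)$, and the use of Theorem \ref{thm:expected_val_trace_bound} to bound the mixed products by $O_k(2^{-dJ/2})$---is exactly the paper's route. Where you genuinely depart from the paper is in the $J=1$ terms, and here your proposal is actually more careful than the published argument: the paper applies the H\"older bound uniformly, arrives at a bound of the form $2^{-d/2}L_k$ (the $m=1$ term being the bottleneck), and then simply asserts that this ``goes to zero with order $O(2^{-d})$,'' which does not follow from the displayed inequality; as written the paper only establishes a rate of $O(2^{-d/2})$. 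You correctly isolate this bottleneck and supply a sharper treatment via $\mathcal{C}_d=\mathcal{A}-\tilde{\mathcal{B}}$. The observation that $\mathbb{E}[\Tr(\mathcal{B}_d^{k-1}\mathcal{A})]$ vanishes identically (each summand carries exactly one entry of $A$, which is independent of every entry of $\mathcal{B}_d$ and has mean zero) is clean and correct, and your claimed $2^{-d}$ reduction for $\mathbb{E}[\Tr(\mathcal{B}_d^{k-1}\tilde{\mathcal{B}})]$ is consistent with direct verification at $k=2$ and $k=4$. The one step you still owe is the general version of that reduction: since the $\tilde{\mathcal{B}}$-entry is a $B_0$-entry, it must pair with another diagonal-block entry of $\mathcal{B}_d$, and you must check, for every contributing pairing of an arbitrary ensemble $\mathcal{E}_B$, that the constraint that $i_k$ and $i_1$ lie in the same $N\times N$ diagonal block is never already forced by the remaining pairing constraints, so that it always costs one block degree of freedom (a factor of $2^d$ out of $(2^dN)^{k/2+1}$). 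Until that is carried out in full generality your proof is a sketch at that point; but the strategy is sound, and---unlike the paper's own argument---it would actually deliver the advertised $O(2^{-d})$ rate rather than only $O(2^{-d/2})$.
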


\begin{proof}
As in \eqref{eq:curlyB}, we consider the moments of
$\mathcal{D}_d (A, \mathbf{B} ) = \mathcal{B}_d + \mathcal{C}_d$.
Suppressing arguments, for $k \in \mathbb{N}$ we observe that
\begin{align}
    &\lim_{d \to \infty} M_k \left( \mathcal{D}_d^k \right) \nonumber \\
    \ &=\ \lim_{d \to \infty} \lim_{N \to \infty} \frac{1}{\left( 2^d N
    \right)^{\frac{k}{2}+1}} \mathbb{E} \left[ \Tr\left( \mathcal{D}_d^k
    \right) \right] \nonumber \\
    \ &=\ \lim_{d \to \infty} \lim_{N \to \infty} \frac{1}{\left( 2^d N
    \right)^{\frac{k}{2}+1}}  \mathbb{E} \left[ \Tr\left( (\mathcal{B}_d
    + \mathcal{C}_d)^k \right) \right] \nonumber \\
    \begin{split}
        \ &=\ \lim_{d \to \infty} \lim_{N \to \infty} \frac{1}{\left( 2^d
        N \right)^{\frac{k}{2}+1}}  \left( \mathbb{E} \left[ \Tr\left(
        \mathcal{B}_d^k \right) \right] \vphantom{\mathbb{E} \left[
        \Tr\left( \sum_{m=1}^{k-1}\; \sum_{ \substack{i_1 + \cdots + i_p
        = k-m \\ j_1 + \cdots + j_p = m}}\left(  \prod_{n=1}^p
        \mathcal{B}_d^{i_n} \mathcal{C}_d^{j_n} \right)   \right) \right]}
        \right. \\
        &\qquad  \left.  + \mathbb{E} \left[ \Tr\left( \sum_{m=1}^{k-1}\;
        \sum_{ \substack{i_1 + \cdots + i_p = k-m \\ j_1 + \cdots + j_p = m}}
        \left(  \prod_{n=1}^p \mathcal{B}_d^{i_n} \mathcal{C}_d^{j_n} \right)
        \right) \right] + \mathbb{E} \left[ \Tr \left( \mathcal{C}_d^k \right)
        \right] \vphantom{\mathbb{E} \left[ \Tr\left( \sum_{m=1}^{k-1}\;
        \sum_{ \substack{i_1 + \cdots + i_p = k-m \\ j_1 + \cdots + j_p = m}}
        \left(  \prod_{n=1}^p \mathcal{D}_d^{i_n} \mathcal{H}_d^{j_n} \right)
        \right) \right]} \right).
    \end{split}\label{eq:trace_exp_infinite_disco_d,n_to_infty}
\end{align}

We consider each summand separately, first noting
\begin{align}
    \lim_{d \to \infty} \lim_{N \to \infty} \frac{\mathbb{E} \left[
    \Tr\left( \mathcal{C}_d^k \right) \right]}{\left( 2^d N \right)^{\frac{k}{2}+1}}
    \ &=\ \lim_{d \to \infty} \lim_{N \to \infty} \frac{ 2^d \mathbb{E} \left[
    \Tr\left( C^k \right) \right]}{2^\frac{dk+2d}{2} N^{\frac{k}{2}+1}}
    \label{eq:bound_on_pollution_matrix} \\
    \ &=\ \lim_{d \to \infty} \left(2^{-\frac{dk}{2}} \right) \lim_{N \to \infty}
    \frac{ \mathbb{E} \left[ \Tr\left( C^k \right) \right]}{ N^{\frac{k}{2}+1}} \nonumber \\
    \ &=\ \lim_{d \to \infty} \left(2^{-\frac{dk}{2}} \right) M_k \left( C \right)
    \label{eq:bounded_C_mom} \\
    \ &=\ 0 . \nonumber
\end{align}
It follows immediately from \eqref{eq:bounded_C_mom} that
\eqref{eq:bound_on_pollution_matrix} goes to zero with order $O(2^{-d})$.

We then consider the term
\begin{align}\label{eq:statement_of_second_term}
    \lim_{d \to \infty} \lim_{N \to \infty} \frac{1}{\left( 2^d N \right)
    ^{\frac{k}{2}+1}} \mathbb{E} \left[ \Tr\left( \sum_{m=1}^{k-1}\;
    \sum_{ \substack{i_1 + \cdots + i_p = k-m \\ j_1 + \cdots + j_p = m}}
    \left(  \prod_{n=1}^p \mathcal{B}_d^{i_n} \mathcal{C}_d^{j_n} \right)
    \right) \right].
\end{align}
Recalling Theorem \ref{thm:expected_val_trace_bound}, we have
\begin{align}
\begin{split}
    \mathbb{E}& \left[ \Tr\left( \sum_{m=1}^{k-1}\; \sum_{
    \substack{i_1 + \cdots + i_p = k-m \\ j_1 + \cdots + j_p = m}}
    \left(  \prod_{n=1}^p \mathcal{B}_d^{i_n} \mathcal{C}_d^{j_n}
    \right)   \right) \right]\\
    &\qquad\qquad \leq\ \sum_{m=1}^{k-1} \binom{k}{k-m}  \mathbb{E}
    \left[ \Tr\left( \mathcal{B}_d^k \right) \right]^\frac{k-m}{k}
    \mathbb{E} \left[ \Tr\left( \mathcal{C}_d^k \right) \right]^\frac{m}{k}
\end{split} \nonumber \\
&\qquad\qquad \ = \ \sum_{m=1}^{k-1} \binom{k}{k-m} 2^{\frac{dm}{k}}
\mathbb{E} \left[ \Tr\left( \mathcal{B}_d^k \right) \right]^\frac{k-m}{k}
\mathbb{E} \left[ \Tr\left( C^k \right) \right]^\frac{m}{k}.
\end{align}
Applying this inequality to \eqref{eq:statement_of_second_term} yields
\begin{align}
    \begin{split}
        &\lim_{d \to \infty} \lim_{N \to \infty} \frac{1}{\left( 2^d N
        \right)^{\frac{k}{2}+1}} \mathbb{E} \left[ \Tr\left(
        \sum_{m=1}^{k-1}\; \sum_{ \substack{i_1 + \cdots + i_p = k-m \\
        j_1 + \cdots + j_p = m}}\left(  \prod_{n=1}^p \mathcal{B}_d^{i_n}
        \mathcal{C}_d^{j_n} \right)   \right) \right]\\
        \ &\leq\ \lim_{d \to \infty} \lim_{N \to \infty} \frac{1}{\left(
        2^d N \right)^{\frac{k}{2}+1}} \sum_{m=1}^{k-1} \binom{k}{k-m}
        2^\frac{dm}{k} \mathbb{E} \left[ \Tr\left( \mathcal{B}_d^k \right)
        \right]^\frac{k-m}{k}   \mathbb{E} \left[ \Tr\left( C^k \right)
        \right]^\frac{m}{k}
    \end{split} \nonumber \\
    \ &=\ \lim_{d \to \infty}   \sum_{m=1}^{k-1} \binom{k}{k-m}
    2^{-\frac{dm}{2}} \lim_{N \to \infty} \left( \frac{\mathbb{E}
    \left[ \Tr\left( \mathcal{B}^k \right) \right]}{ (2^d N)^{\frac{k}{2}+1} }
    \right)^\frac{k-m}{k}  \left( \frac{ \mathbb{E} \left[ \Tr\left( C^k \right)
    \right]}{ N^{\frac{k}{2}+1} } \right)^\frac{m}{k} \nonumber  \\
    \ &=\ \lim_{d \to \infty}   \sum_{m=1}^{k-1} \binom{k}{k-m} 2^{-\frac{dm}{2}}
    M_k(B_i)^\frac{k-m}{k}  M_{k}(C)^\frac{m}{k} \nonumber \\
    \ &\leq\ \lim_{d \to \infty}  2^{-\frac{d}{2}} \sum_{m=1}^{k-1} \binom{k}{k-m}
    M_k(B_i)^\frac{k-m}{k}  M_{k}(C)^\frac{m}{k}.
\end{align}
As $M_k(\mathcal{B}_d)$ and $M_k (C)$ are both finite, there exists some constant
$L_k$ such that
\begin{equation}
    L_k \geq \sum_{m=1}^{k-1} \binom{k}{k-m} M_k (B_i)^\frac{k-m}{k}
    M_k (C)^\frac{m}{k}.
\end{equation}
We then observe
\begin{align}
    \begin{split}\label{eq:bound_on_second_summand}
        \lim_{d \to \infty} \lim_{N \to \infty} \frac{1}{\left( 2^d N
        \right)^{\frac{k}{2}+1}} & \mathbb{E} \left[ \Tr\left( \sum_{m=1}^{k-1}\;
        \sum_{ \substack{i_1 + \cdots + i_p = k-m \\ j_1 + \cdots + j_p = m}}
        \left(  \prod_{n=1}^p \mathcal{B}_d^{i_n} \mathcal{C}_d^{j_n} \right)
        \right) \right] \\
        \ &\leq\  \lim_{d \to \infty}  2^{-\frac{d}{2}} L_k
    \end{split} \\
    \ &=\ 0 \nonumber
\end{align}
from which it follows that \eqref{eq:bound_on_second_summand} goes to zero with
order $O\left( 2^{-d} \right)$ as $d \to \infty$. Finally, we consider
the first summand and note
\begin{align}
    \lim_{d \to \infty} \lim_{N \to \infty} \frac{\mathbb{E} \left[ \Tr \left(
    \mathcal{B}_d^k \right) \right]}{\left( 2^d N \right)^{\frac{k}{2}+1}}
    \ = \ M_k ( B_i ) \label{eq:third_summand_to_dist}
\end{align}
so that, substituting \eqref{eq:bounded_C_mom}, \eqref{eq:bound_on_second_summand},
and \eqref{eq:third_summand_to_dist} into
\eqref{eq:trace_exp_infinite_disco_d,n_to_infty} yields the desired result
\begin{equation}\label{d-disco moment converge}
    \lim_{d\to\infty}M_k\left( \mathcal{D}_d \right) \ = \ M_k \left( B_i \right).
\end{equation}

\end{proof}

Finally, we show that for all finite $d \in \mathbb{Z}^+$, the limiting distribution of
$\mathcal{D}_d (A, \mathbf{B})$ converges to a new, universal distribution; as
$d \to \infty$, the limiting distribution of $\mathcal{D}_d (A, \mathbf{B})$
converges to the distribution of the $\mathbf{B}$ ensemble.

\begin{thm}\label{thm:weakCon_infty_disco_to_curly_B}
For finite $d \in \mathbb{Z}^+$, the limiting spectral distribution of
$\mathcal{D}_d(A, \mathbf{B})$ whose independent entries are independently
chosen from a probability distribution $p$ with mean $0$, variance $1$ and
finite higher moments converges weakly to a new, universal distribution
independent of $p$. As $d \to \infty$, the limiting spectral distribution
of $\mathcal{D}_d(A, \mathbf{B})$ converges weakly to that of the $\mathbf{B}$
ensemble.
\begin{proof}
We first consider finite $d < \infty$. As in Theorem \ref{thm:weakconv},
it suffices to show that
\begin{equation}\label{conv_for_finite_d}
    \lim_{N \to \infty} \left( \mathbb{E} \left[ M_{m} \left( \mathcal{D}_d, A, \mathbf{B}, N \right)^2 \right] - \mathbb{E} \left[ M_{m} \left( \mathcal{D}_d, A, \mathbf{B}, N \right) \right]^2 \right) = 0.
\end{equation}
We decompose $\mathcal{D}_d$ as the sum $\mathcal{B}_d + \mathcal{C}_d$ and
invoke the Eigenvalue Trace Lemma to observe
\begin{align}
    \mathbb{E} &\left[ M_{2k} \left( \mathcal{D}_d, A, \mathbf{B}, N \right)^2 \right] \nonumber\\
    &\ = \  \frac{1}{(2^d N)^{ 2k + 2 }} \mathbb{E} \left[ \left(\Tr \left( \mathcal{D}_d^{2k} \right) \right)^2 \right] \nonumber \\
    &\ = \  \frac{1}{(2^d N)^{ 2k + 2 }} \mathbb{E} \left[ \Tr \left( (\mathcal{B}_d + \mathcal{C}_d)^{2k} \right)  \right] \nonumber \\
    &\ = \  \frac{1}{(2^d N)^{ 2k + 2 }} \mathbb{E} \left[ \left(\Tr \left( \mathcal{B}_d^{2k} + \sum_{T=1}^{2k-1} \prod_{h=1}^p \mathcal{B}_d^{I_h} \mathcal{C}_d^{J_h} + \mathcal{C}_d^{2k} \right) \right)^2 \right].
\end{align}
Applying the linearity of trace and expanding yields
\begin{align}
    \begin{split}
    &\mathbb{E} \left[ M_{2k} \left( \mathcal{D}_d, A, \mathbf{B}, N \right)^2 \right]\\
    &\ = \  \frac{1}{(2N)^{ 2k + 2 }} \Bigg( \mathbb{E}\left[ \Tr \left( \mathcal{B}_d^{2k} \right)^2 \right] + \mathbb{E}\left[ \Tr \left( \mathcal{C}_d^{2k} \right)^2 \right] + 2 \mathbb{E}\left[ \Tr \left( \mathcal{B}_d^{2k} \right) \right] \mathbb{E} \left[ \Tr \left( \mathcal{C}_d^{2k} \right) \right]\\
    &\qquad + \mathbb{E}\left[ \left( \sum_{T=1}^k \Tr \left( \prod_{h=1}^p \mathcal{B}_d^{I_h} \mathcal{C}_d^{J_h} \right) \right)^2 \right] + 2 \mathbb{E}\left[ \Tr \left( \mathcal{B}_d^{2k} \right) \sum_{T=1}^k \Tr \left( \prod_{h=1}^p \mathcal{B}_d^{I_h} \mathcal{C}_d^{J_h} \right) \right] \\
    &\hspace{2cm} + 2 \mathbb{E}\left[ \Tr \left( \mathcal{C}_d^{2k} \right) \sum_{T=1}^k \Tr \left( \prod_{h=1}^p \mathcal{B}_d^{I_h} \mathcal{C}_d^{J_h} \right) \right] \Bigg).
        \end{split}
\end{align}
Similarly,
\begin{align}
    \begin{split}
    &\mathbb{E} \left[ M_{2k} \left( \mathcal{D}_d, A, \mathbf{B}, N \right) \right]^2\\
    &\ = \  \frac{1}{(2N)^{ 2k + 2 }} \Bigg( \mathbb{E}\left[ \Tr \left( \mathcal{B}_d^{2k} \right) \right]^2 + \mathbb{E}\left[ \Tr \left( \mathcal{C}_d^{2k} \right) \right]^2 + 2 \mathbb{E}\left[ \Tr \left( \mathcal{B}_d^{2k} \right) \right] \mathbb{E} \left[ \Tr \left( \mathcal{C}_d^{2k} \right) \right]\\
    &\qquad + \mathbb{E}\left[ \sum_{T=1}^k \Tr \left( \prod_{h=1}^p \mathcal{B}_d^{I_h} \mathcal{C}_d^{J_h} \right) \right]^2 + 2 \mathbb{E}\left[ \Tr \left( \mathcal{B}_d^{2k} \right) \right] \mathbb{E} \left[ \sum_{T=1}^k \Tr \left( \prod_{h=1}^p \mathcal{B}_d^{I_h} \mathcal{C}_d^{J_h} \right) \right] \\
    &\hspace{2cm} + 2 \mathbb{E}\left[ \Tr \left( \mathcal{C}_d^{2k} \right) \right] \mathbb{E} \left[ \sum_{T=1}^k \Tr \left( \prod_{h=1}^p \mathcal{B}_d^{I_h} \mathcal{C}_d^{J_h} \right) \right] \Bigg).
    \end{split}
\end{align}
Recalling that
\begin{align}
    \lim_{N\to \infty} \frac{1}{(2^d N)^{2k+2}} & \left( \mathbb{E}
        \left[ \Tr \left( \mathcal{B}_d^{2k} \right)^2 \right]-  \mathbb{E}
        \left[ \Tr \left( \mathcal{B}_d^{2k} \right) \right]^2 \right)=0
\end{align}
and
\begin{align}
    \lim_{N\to \infty} \frac{1}{(2^d N)^{2k+2}} & \left( \mathbb{E}
        \left[ \Tr \left( \mathcal{C}_d^{2k} \right)^2 \right]-  \mathbb{E}
        \left[ \Tr \left( \mathcal{C}_d^{2k} \right) \right]^2 \right)=0
\end{align}
the results then follow by arguments analogous to those of Theorem \ref{thm:weakconv}.

For $d\to\infty$, Theorem \ref{thm:weakconv} also applies. It then
    suffices to show that
    \begin{equation}
        \lim_{d\to\infty}\lim_{N\to\infty}\left(\mathbb{E}\left[M_{m}
        \left( \mathcal{D}_d,N \right)^2\right]-\mathbb{E}\left[M_{m}
        \left( \mathcal{D}_d,N \right)\right]^2\right) \ = \ 0.
    \end{equation}
    By \eqref{conv_for_finite_d}, for any finite $d$, the inner limit
    equals zero. It follows trivially that the outer limit is also zero
    as we are just taking the limit of a constant sequence.

\end{proof}
\end{thm}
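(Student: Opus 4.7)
The plan is to apply Markov's method of moments via the decomposition $\mathcal{D}_d = \mathcal{B}_d + \mathcal{C}_d$ from \eqref{eq:curlyC_construction} and \eqref{eq:curlyB}, leveraging the results already accumulated for each summand. For finite $d$ there are three ingredients to establish: (i) existence and finiteness of $M_k(\mathcal{D}_d)$; (ii) convergence of the expected empirical moment $\mathbb{E}[M_k(\mathcal{D}_d, N)] \to M_k(\mathcal{D}_d)$; (iii) vanishing of the variance $\mathbb{E}[M_k(\mathcal{D}_d, N)^2] - \mathbb{E}[M_k(\mathcal{D}_d, N)]^2 \to 0$. Ingredients (i) and (ii) come from expanding $\Tr((\mathcal{B}_d + \mathcal{C}_d)^k)$ as a sum of traces of the form $\Tr(\prod_n \mathcal{B}_d^{i_n} \mathcal{C}_d^{j_n})$: the pure $\mathcal{B}_d$ term yields $M_k(B_i)$ by Corollary \ref{lem:curly_B_goto_B_dist}, the pure $\mathcal{C}_d$ term is finite by Lemma \ref{lem:all_moments_C_finite}, and each mixed term is bounded using Theorem \ref{thm:expected_val_trace_bound}.

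For ingredient (iii), I would expand $\mathbb{E}[M_k(\mathcal{D}_d, N)^2]$ and $\mathbb{E}[M_k(\mathcal{D}_d, N)]^2$ via bilinearity of expectation applied to the $\mathcal{B}_d + \mathcal{C}_d$ decomposition and match like terms. The difference splits naturally into pure-$\mathcal{B}_d$ variance, pure-$\mathcal{C}_d$ variance, covariances between pure powers, and covariances involving mixed products. The pure contributions vanish by the arguments of \cite{HM} (and the discussion preceding Theorem \ref{thm:1-disco_weak_con}); the cross and mixed contributions are handled by the crossover analysis used in Lemmas \ref{lem:contribution_square_of_EV} and \ref{lem:contribution_EV_of_square}. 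In each case, any crossover between an $s$-indexed and a $t$-indexed entry forces the loss of at least one degree of freedom from the $2k+2$ available, yielding an $O_k(1/N)$ bound. Chebyshev's inequality combined with Theorem \ref{thm:momct} then delivers weak convergence of $\mu_{\mathcal{D}_d}$ to a limit law, and universality in $p$ follows from the standard observation that only paired matchings survive in the limit, so only the variance of $p$ contributes.

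For the second statement, I would iterate the limits. For each finite $d$, the inner limit $N \to \infty$ produces $M_k(\mathcal{D}_d)$; Theorem \ref{thm:inf_disco_convergence_of_moments} then gives $M_k(\mathcal{D}_d) \to M_k(B_i)$ with geometric rate $O(2^{-d})$. Since the target ensemble's moments satisfy the hypotheses needed to uniquely determine a probability measure (Carleman's condition, under the assumed moment bounds), the Moment Convergence Theorem upgrades moment convergence to weak convergence of $\mu_{\mathcal{D}_d}$ to the spectral distribution of $\mathcal{E}_B$ as $d \to \infty$.

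The principal obstacle I foresee is the crossover bookkeeping in ingredient (iii) at arbitrary $d$: indices now range over blocks of size $2^d N$, and $\mathcal{C}_d$ has a block-diagonal structure in which every diagonal block is the same dependent matrix $C = A - B_0$, introducing nontrivial correlations between entries that lie in different diagonal copies. Care is needed to verify that crossovers between entries of distinct copies of $C$, or between a $C$-entry and a $\mathcal{B}_d$-entry (both of which are built from $A$ and $B_0$), do not retain more degrees of freedom than in the independent case. I expect this to be manageable by inducting on $d$ and reducing to the $d=1$ analysis of Lemmas \ref{lem:contribution_square_of_EV} and \ref{lem:contribution_EV_of_square}, whose generality is preserved under the block-diagonal extension.
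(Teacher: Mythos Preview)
Your proposal is correct and follows essentially the same approach as the paper: decompose $\mathcal{D}_d = \mathcal{B}_d + \mathcal{C}_d$, expand the variance term-by-term, handle the pure and mixed pieces by the crossover degree-of-freedom arguments from the $d=1$ analysis, and for $d\to\infty$ invoke Theorem~\ref{thm:inf_disco_convergence_of_moments} together with the Moment Convergence Theorem. If anything, your write-up is slightly more explicit than the paper's in flagging the dependence among the repeated copies of $C$ inside $\mathcal{C}_d$ and in naming Theorem~\ref{thm:inf_disco_convergence_of_moments} as the input for the $d\to\infty$ step, whereas the paper simply says the remaining terms follow ``by arguments analogous to those of Theorem~\ref{thm:weakconv}'' and treats the outer limit as trivially zero.
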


\section{Future Work}\label{sec:futurework}

So far we have investigated the density of the eigenvalues; we now
consider another problem, that of the spacings between adjacent
eigenvalues of $\mathcal{D}_d \left( A, \mathbf{B} \right)$. For
concreteness we restrict our purview to $A$ a symmetric palindromic
Toeplitz matrix and $\mathbf{B}$ a sequence of real symmetric
matrices. In this case $\mathcal{D}_d$ has only
\begin{equation}
    \frac{N}{2} + \sum_{i=1}^d \frac{2^{i-1} N (2^{i-1} N + 1)}{2}
\end{equation}
degrees of freedom, which is much smaller than $2^d N (2^d N + 1)/2$,
it is reasonable to believe the spacings between adjacent normalized
eigenvalues
$(\gl_{i+1}(\mathcal{D}_d) - \gl_i(\mathcal{D}_d)) / \sqrt{N}$
may differ from those of full real symmetric matrices.

\begin{figure}[ht]
    \centering
    \includegraphics[width=0.6\textwidth]{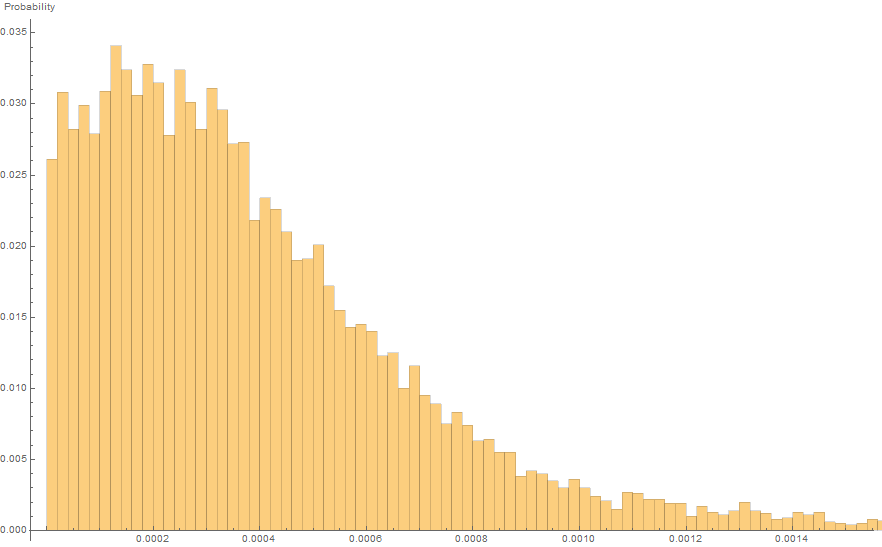}
    \caption{Eigenvalue gaps of a $20,000 \times 20,000$ matrix $\mathcal{D}_1 (A, B)$, with $A$ a random symmetric palindromic Toeplitz matrix and $B$ a random real symmetric matrix.}
    \label{fig:gaps}
\end{figure}

In \cite{MMS} it is conjectured that the limiting eigenvalue gap
distribution of symmetric palindromic Toeplitz matrices
is Poissonian, while the ensemble
of all real symmetric matrices is conjectured to have normalized
spacings given by the GOE distribution whenever the independent
matrix elements are independently chosen from a nice distribution
$p$. As $\mathcal{D}_1 (A, B)$ exhibits eigenvalue behavior
representing a hybrid of its component behaviors, we similarly
conjecture that the limiting eigenvalue gap distribution of
$\mathcal{D}_1 (A, B)$ is bounded by that of its submatrices
$A$ and $B$.

Additionally, numerical experiments in constructing $\mathcal{D}_1(A,B)$
with $A$, $B$ drawn from several pairs of random ensembles
whose limiting eigenvalue distributions are known suggests the following
conjecture.
\begin{conj}\label{conj:LW}
Let $A$, $B$ be $N \times N$ random matrices with independent entries i.i.d.
from a fixed probability distribution $p$ with mean 0 and variance 1. Suppose
that the limiting eigenvalue distributions of $A$, $B$ have all moments
finite and appropriately bounded. Then
\begin{equation}\label{eq:conj}
    \min\left\lbrace M_k(A), M_k(B) \right\rbrace \ \leq\ M_k\left(
    \mathcal{D}_1(A,B) \right) \leq \max\left\lbrace M_k(A), M_k(B) \right\rbrace.
\end{equation}
\end{conj}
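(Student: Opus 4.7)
The plan is to exploit the block diagonalization of $\mathcal{D}_1$ given in \eqref{eq:1-disco_decomp}, which yields the trace identity $\Tr(\mathcal{D}_1^k) = \Tr((A+B)^k) + \Tr((A-B)^k)$. Expanding both powers, monomials with an odd number of $B$'s cancel and those with an even number survive with coefficient $2$, giving
\[
M_k(\mathcal{D}_1) \ =\ \frac{1}{2^{k/2}\,N^{k/2+1}} \sum_{\substack{w\text{ of length }k \\ 2j \text{ copies of }B}}\mathbb{E}[\Tr(w)].
\]
The two pure-word contributions ($j=0$ and $j=k/2$) are exactly $\tfrac{1}{2^{k/2}}M_k(A)$ and $\tfrac{1}{2^{k/2}}M_k(B)$. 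Odd moments vanish verbatim by the argument of Lemma \ref{oddmoments}, reducing attention to $k=2m$, and the substantive content of the conjecture lies in controlling the mixed-word sum.

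The naive approach is to apply Theorem \ref{thm:expected_val_trace_bound} word-by-word, yielding
\[
\lim_{N\to\infty}\frac{\mathbb{E}[\Tr(w)]}{N^{m+1}}\ \leq\ M_k(A)^{(m-j)/m}\,M_k(B)^{j/m},
\]
which is log-convex and thus at most $\max\{M_k(A),M_k(B)\}$ for each $w$. However, summing over the $\binom{2m}{2j}$ words at each level and over $j$ recovers only the weak bound $2^{m-1}\max\{M_k(A),M_k(B)\}$ already established in Theorem \ref{thm:expected_val_trace_bound}; removing the spurious factor $2^{m-1}$ is the crux of the conjecture.

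My strategy is a continuity/interpolation argument anchored at Theorem \ref{thm:luntzlara-wang-tweak}. Choose a one-parameter family $\mathcal{E}^{(t)}$ of ensembles (entries i.i.d.\ from $p$) interpolating from $\mathcal{E}_A$ at $t=0$ to $\mathcal{E}_B$ at $t=1$, and define $\phi(s,t) \coloneqq M_k(\mathcal{D}_1(X^{(s)}, Y^{(t)}))$ for independent $X^{(s)} \sim \mathcal{E}^{(s)}$ and $Y^{(t)} \sim \mathcal{E}^{(t)}$. The diagonal values satisfy $\phi(s,s) = M_k(\mathcal{E}^{(s)})$ by Theorem \ref{thm:luntzlara-wang-tweak}, interpolating continuously between $M_k(A)$ and $M_k(B)$, while the target quantity is $\phi(0,1) = M_k(\mathcal{D}_1(A,B))$. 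A sufficient condition for the conjecture is the \emph{betweenness property}: for every $s,t \in [0,1]$, the value $\phi(s,t)$ lies in the closed interval with endpoints $\phi(s,s)$ and $\phi(t,t)$.

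The hard part will be establishing this betweenness property, which amounts to a quantitative monotonicity of mixed moments in the ensemble parameter. Mixed moments depend on combinatorial pairing constraints imposed by the two ensembles jointly, in a manner that is intricate even in the PST/RS case captured by Theorem \ref{Combo}. A plausible route is to generalize Theorem \ref{Combo} to arbitrary ensemble pairs, expressing each mixed moment as a weighted sum over spanning-tree diagrams whose weights depend monotonically on the relative restrictiveness of the two ensembles on admissible chord crossings; the conjecture then reduces to a sign-coherence statement at the diagrammatic level. An alternative is a coupling argument exhibiting a canonical injection of the contributing pairings for $\mathcal{D}_1(A,B)$ into those of whichever of $\mathcal{D}_1(A,A')$ or $\mathcal{D}_1(B,B')$ has the larger moment (with $A',B'$ independent copies drawn from $\mathcal{E}_A$ and $\mathcal{E}_B$); such an injection immediately yields the upper sandwich bound, with the lower bound following by a symmetric argument.
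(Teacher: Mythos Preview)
The statement you are attempting to prove is labeled a \emph{conjecture} in the paper (Conjecture~\ref{conj:LW}, in Section~\ref{sec:futurework} on future work), and the paper offers no proof of it. The surrounding discussion presents only numerical evidence (Tables~\ref{tab:conj_data1} and~\ref{tab:conj_data2}), observes that the deterministic inequality \eqref{eq:conj_1_equiv} would imply the conjecture, and then exhibits an explicit $20\times 20$ counterexample showing that \eqref{eq:conj_1_equiv} is \emph{false}. So there is no ``paper's own proof'' to compare against; the conjecture is genuinely open in the paper.

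Your proposal is a strategy sketch rather than a proof, and you acknowledge as much (``The hard part will be establishing this betweenness property''). There are two concrete gaps worth flagging. First, the interpolation step is not well-posed: the ensembles $\mathcal{E}_A$ and $\mathcal{E}_B$ in the paper are defined by \emph{structural} constraints (e.g., PST has $N/2$ free parameters, RS has $N(N+1)/2$), not by a continuous parameter, so there is no canonical one-parameter family $\mathcal{E}^{(t)}$ of real symmetric sub-ensembles joining them. Any such family would have to be constructed ad hoc, and the betweenness property you need for $\phi(s,t)$ is, once unwound, essentially the conjecture itself applied to every intermediate pair---so the reduction does not gain ground. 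Second, the coupling/injection idea presumes that moments of arbitrary sub-ensembles are governed by a monotone lattice of ``admissible pairings,'' with more restrictive ensembles having strictly fewer contributing diagrams. This is true in the PST/RS case (where Theorem~\ref{Combo} applies and crossings are the only obstruction), but the paper gives no indication that a comparable diagrammatic calculus exists for general $\mathcal{E}_A,\mathcal{E}_B$, and the failure of the pointwise trace inequality \eqref{eq:conj_1_equiv} shows that any valid argument must exploit the averaging over the ensemble in an essential way rather than rely on matrix-by-matrix comparisons.
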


Tables \ref{tab:conj_data1}, \ref{tab:conj_data2} show experimental results
supporting Conjecture \ref{conj:LW}. We compute small moments of $\mathcal{D}_1 (A, B)$
where $A$ is either a random real symmetric matrix or a random symmetric
palindromic Toeplitz matrix, with entries i.i.d.r.v. from a normal distribution
with mean $0$ and variance $1$. $B$ is constructed as a random $3$-period
block circulant matrix (see \cite{KMMSX} for a full treatment of the construction of block-
circulant matrices and their limiting eigenvalue distributions).

\begin{center}
\begin{table}[ht]
\begin{tabular}{|c|c|c|c|}\hline
Moment & $M_k(A)$ & \hspace{0.5cm} $\mathcal{D}_1(A,B)$ \hspace{0.5cm}
& $M_k(B)$ \\ \hline
4 & 2.000  & 2.071    & 2.183  \\ \hline
6 & 4.997  & 5.363    & 6.257  \\ \hline
8 & 13.985 & 15.759 & 21.974 \\ \hline
\end{tabular}
\caption{\label{tab:conj_data1} Numerical data from a $11994\times 11994$ Disco of a random RS $A$ and a random $3$-period block circulant matrix $B$ supporting \eqref{conj:LW}.}
\end{table}

\begin{table}[ht]
\begin{tabular}{|c|c|c|c|}\hline
Moment & $M_k(A)$ & \hspace{0.5cm} $\mathcal{D}_1(A,B)$ \hspace{0.5cm}
& $M_k(B)$ \\ \hline
4 & 2.948  & 2.544   & 2.330  \\ \hline
6 & 14.863  & 9.783    & 7.929  \\ \hline
8 & 102.518 & 50.681 & 36.884 \\ \hline
\end{tabular}
\caption{\label{tab:conj_data2} Numerical data from a $11994\times 11994$ Disco of a random PST $A$ and a random $3$-period block circulant matrix $B$ supporting \eqref{conj:LW}.}
\end{table}
\end{center}

The computation of $M_k \left( \mathcal{D}_1(A,B) \right)$ involves the
expansion previously shown in \eqref{eq1.1}. The primary obstacle
is bounding the contribution of arbitrary bi-variate matrix products in
the limit as $N \to \infty$. While the result of Theorem
\ref{thm:expected_val_trace_bound} gives one such bound, it is not
sufficiently sharp to establish the stated conjecture. A central challenge
in crafting sharp bounds on the contribution of such terms is, for arbitrary
$A$ and $B$, the lack of information on the pairing configurations of
entries that do not vanish in the limit.

By Eigenvalue Trace Lemma and \eqref{eq:1-disco-diagonalized}, \eqref{eq:conj} can be rewritten as 
\begin{align}\label{eq:conj_1}
    \begin{split}
    & \min\left\lbrace \lim_{N\to\infty}\mathbb{E}\left[\Tr(A^k)\right], \lim_{N\to\infty}\mathbb{E}\left[\Tr(B^k)\right] \right\rbrace \\
    &\qquad \ \leq\ \mathbb{E}\left[\lim_{N\to\infty}\frac{1}{2^{\frac{k}{2}+1}}\Tr\left((A+B)^k+(A-B)^k\right)\right]\\ 
    &\qquad \ \leq\ \max\left\lbrace \lim_{N\to\infty}\mathbb{E}\left[\Tr(A^k)\right], \lim_{N\to\infty}\mathbb{E}\left[\Tr(B^k)\right] \right\rbrace.
    \end{split}
\end{align}

Note that \eqref{eq:conj_1} would follow immediately if, for all $N\times N$ real symmetric matrices $A$ and $B$, it were true that 
\begin{align}\label{eq:conj_1_equiv}
    \min\left\lbrace \Tr(A^k), \Tr(B^k) \right\rbrace & \ \leq\ \frac{1}{2^{\frac{k}{2}+1}} \Tr\left((A+B)^k+(A-B)^k\right) \nonumber \\
    & \ \leq\ \max\left\lbrace \Tr(A^k), \Tr(B^k) \right\rbrace.
\end{align}
Unfortunately this is not the case, as evidenced by the following construction\footnote{This construction was suggested by Zhijie Chen, Jiyoung Kim, and Samuel Murray of Carnegie Mellon University. }. Let
\begin{equation}
    \mathbf{a} = \left[\begin{array}{rr}
        -33 & -31 \\
        -31 & -82
    \end{array} \right] \qquad\text{and}\qquad  \mathbf{b} = \left[\begin{array}{rr}
        26 & 78 \\
        78 & -15
    \end{array} \right] 
\end{equation}
so that, for any $m \in \mathbb{Z}^+$,
\begin{equation}
    A_{2m \times 2m} = \left[ \begin{array}{ccc}
        \mathbf{a} & & \\
         & \ddots &\\
         & & \mathbf{a}
    \end{array}\right] \qquad\text{and}\qquad B_{2m \times 2m} = \left[ \begin{array}{ccc}
        \mathbf{b} & & \\
         & \ddots &\\
         & & \mathbf{b}
    \end{array}\right]
\end{equation}
are matrices of equal dimension $2m \times 2m$ with $m$ instances of $\mathbf{a}$ and $\mathbf{b}$ along their main diagonals, respectively. For $m = 10$ and $k = 4$, we compute
\begin{align}
\Tr \left(A_{20 \times 20}^4 \right) &= 889,801,750 \nonumber\\
\Tr \left(A_{20 \times 20}^4 \right) &= 869,734,090 \nonumber \\ 
\dfrac{\Tr\left((A_{20 \times 20} + B_{20 \times 20})^4+(A_{20 \times 20}-B_{20 \times 20})^4\right)}{2^3} &= 1,336,343,790
\end{align}
which is clearly at odds with \eqref{eq:conj_1_equiv}.

\section{Appendix}
\subsection{Proof of Generalized H\"older's Inequality}
\begin{lem}{(Eigenvalue Trace Formula)}
Let $X$ be a $N \times N$ Hermitian matrix, denote $\lambda_1, \cdots, \lambda_N$ the eigenvalues of $X$, then for $k \in \mathbb{N}$
\begin{equation}
    \Tr(X^k) = \sum_{i=1}^N \lambda_i^k.
\end{equation}
\end{lem}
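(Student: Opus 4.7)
The plan is to appeal directly to the spectral theorem for Hermitian matrices and then exploit the cyclic invariance of the trace. Since $X$ is Hermitian, the spectral theorem guarantees a unitary $U$ and a diagonal matrix $D = \mathrm{diag}(\lambda_1, \ldots, \lambda_N)$ such that $X = U D U^*$, with $U^* U = U U^* = I_N$. I would state this as the first step of the proof.

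Next I would establish that $X^k = U D^k U^*$ by a short telescoping argument: writing $X^k = (UDU^*)(UDU^*)\cdots(UDU^*)$ with $k$ factors, each adjacent pair $U^* U$ collapses to $I_N$, leaving only $U D^k U^*$. A one-line induction on $k$ suffices here. With this identity in hand, the cyclic property of the trace gives
\begin{equation}
\Tr(X^k) \;=\; \Tr(U D^k U^*) \;=\; \Tr(D^k U^* U) \;=\; \Tr(D^k),
\end{equation}
and because $D^k$ is diagonal with entries $\lambda_1^k, \ldots, \lambda_N^k$, its trace is exactly $\sum_{i=1}^N \lambda_i^k$, which is the desired identity.

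There is essentially no obstacle in this argument; the only point worth flagging is that the proof uses only the diagonalizability of $X$ together with cyclicity of the trace, so the same conclusion holds for any diagonalizable matrix (Hermiticity is convenient because it guarantees a unitary diagonalization and real eigenvalues, but is not strictly required for the trace identity itself). An alternative route would be to avoid the spectral theorem entirely and invoke the fact that $\Tr(X^k)$ is the $k$th power sum of the roots of the characteristic polynomial $\det(tI - X)$, which by Newton's identities equals $\sum_i \lambda_i^k$; I would mention this only as a remark, since the spectral-theorem proof is cleaner in the Hermitian setting where it is needed in the main text.
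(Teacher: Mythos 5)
Your proof is correct and is the standard argument; the paper itself states this lemma without proof (it is invoked as a well-known fact), so there is nothing to compare against. The spectral-theorem route ($X = UDU^*$, hence $X^k = UD^kU^*$, then cyclicity of the trace reduces everything to $\Tr(D^k)$) is exactly what one would write if a proof were included, and your remark that only diagonalizability is needed is also accurate.
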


\begin{lem}{(von Neumann's Trace Inequality \cite{VN})}
For any $N \times N$ complex matrices $A,\; B$ with singular values $\sigma_1(A) \geq \cdots \geq \sigma_N(A)$ and $\sigma_1(B) \geq \cdots \geq \sigma_N(B)$
\begin{equation}
    |\Tr(AB)| \leq \sum_{i=1}^N \sigma_i(A)\sigma_i(B).
\end{equation}
\end{lem}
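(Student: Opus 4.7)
The plan is to proceed by induction on the number $k$ of factors, using von Neumann's Trace Inequality as the backbone of the base case and then bootstrapping up via a Schatten-norm product bound. For the base case $k = 2$, the hypothesis $1/p_1 + 1/p_2 = 1$ is the familiar Hölder conjugate relation for scalars, so combining von Neumann's Trace Inequality with the classical scalar Hölder inequality suffices:
\[
|\Tr(X_1 X_2)| \ \le\ \sum_{i=1}^N \sigma_i(X_1)\sigma_i(X_2) \ \le\ \left(\sum_i \sigma_i(X_1)^{p_1}\right)^{1/p_1}\!\!\left(\sum_i \sigma_i(X_2)^{p_2}\right)^{1/p_2} = \|X_1\|_{p_1}\|X_2\|_{p_2}.
\]

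For the inductive step, my plan is to peel off one factor at a time using the base case together with an auxiliary Schatten-norm product inequality $\|AB\|_r \le \|A\|_s\|B\|_t$ valid whenever $1/r = 1/s + 1/t$. Granting this, set $1/q = 1 - 1/p_1 = \sum_{i=2}^k 1/p_i$; the base case yields
\[
|\Tr(X_1 X_2 \cdots X_k)| \ \le\ \|X_1\|_{p_1}\,\|X_2 X_3 \cdots X_k\|_q,
\]
and then $k-2$ successive applications of the product inequality reduce the right-hand factor to $\prod_{i=2}^k \|X_i\|_{p_i}$, closing the induction. The exponent bookkeeping at each step is routine: after peeling off $X_j$ with weight $p_j$, the residual Schatten index satisfies $1/q_j = 1/q_{j-1} - 1/p_j = \sum_{i>j} 1/p_i$.

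The main obstacle is establishing the product inequality $\|AB\|_r \le \|A\|_s\|B\|_t$ in a self-contained way, since naively applying the base trace inequality to $\Tr(ABW)$ cycles back to exactly the product inequality one is trying to prove. My plan is to use the log-majorization $\sigma(AB) \prec_{\log} \sigma(A)\odot\sigma(B)$ of singular values (Horn's inequality), which, combined with the fact that $x \mapsto x^r$ preserves log-majorization for $r > 0$, yields $\sum_i \sigma_i(AB)^r \le \sum_i \sigma_i(A)^r\sigma_i(B)^r$. A second application of the scalar Hölder inequality with conjugate exponents $s/r$ and $t/r$ (whose reciprocals sum to $r/s + r/t = 1$) then gives $\sum_i \sigma_i(A)^r \sigma_i(B)^r \le \|A\|_s^r\|B\|_t^r$, and taking $r$-th roots yields the product inequality. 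A self-contained alternative proceeds via the dual characterization $\|Y\|_r = \sup\{|\Tr(YZ)| : \|Z\|_{r'}\le 1\}$, whose nontrivial lower bound is witnessed by the explicit extremizer $Z = V\Sigma^{r-1}U^*/\|\Sigma^{r-1}\|_{r'}$ built from the SVD $Y = U\Sigma V^*$, with the Eigenvalue Trace Formula verifying $|\Tr(YZ)| = \|Y\|_r$.
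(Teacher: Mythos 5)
Your proposal does not prove the stated lemma. The statement to be established is von Neumann's trace inequality itself, $|\Tr(AB)| \le \sum_{i=1}^N \sigma_i(A)\sigma_i(B)$, whereas what you outline is a proof of the \emph{Generalized H\"older Trace Inequality} for $k$ factors in the Schatten norms, and your base case opens by ``combining von Neumann's Trace Inequality with the classical scalar H\"older inequality'' --- that is, it invokes exactly the inequality you were asked to prove. None of the machinery in your outline (the log-majorization $\sigma(AB)\prec_{\log}\sigma(A)\odot\sigma(B)$, the duality characterization of $\norm{\cdot}_r$) supplies an independent argument for the pairing bound on $|\Tr(AB)|$; those tools address the separate product inequality $\norm{AB}_r \le \norm{A}_s\norm{B}_t$, which is a different assertion.

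For what it is worth, the paper does not prove this lemma either: it is stated in the appendix with a citation to von Neumann's original work and used as a black box, so there is no ``paper proof'' to match. If you want a self-contained argument, the standard route is to take singular value decompositions $A = U\Sigma_A V^*$, $B = W\Sigma_B Z^*$, write $\Tr(AB) = \sum_{i,j}\sigma_i(A)\sigma_j(B)\,p_{ij}$ where the weights $p_{ij}$ come from entries of products of unitaries, observe that $(|p_{ij}|)$ is dominated by a doubly stochastic matrix, and conclude via the Birkhoff--von Neumann theorem together with the rearrangement inequality; alternatively, maximize $|\Tr(AUBV)|$ over unitaries $U,V$ and identify the extremizers. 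Your outline, read as a proof of the generalized H\"older trace inequality, is essentially sound and close in spirit to the paper's appendix argument for that theorem (which likewise peels off one factor using the two-factor case plus a singular-value product bound from Horn--Johnson), but as a proof of the present lemma it is circular.
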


\begin{lem}{(Jensen's Inequality \cite{Jen})}
Let $\chi$ be a random variable, $\phi$ a convex function, then
\begin{equation}
    \phi(\mathbb{E}[\chi]) \leq \mathbb{E}[\phi(\chi)].
\end{equation}
\end{lem}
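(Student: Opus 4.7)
The plan is to prove Jensen's Inequality via the supporting hyperplane (tangent line) characterization of convex functions. Recall that $\phi : \mathbb{R} \to \mathbb{R}$ is convex precisely when, at every interior point $a$ of its domain, there exists a slope $c = c(a)$ (any element of the subdifferential, e.g.\ a one-sided derivative) such that the affine function $L(x) = \phi(a) + c(x-a)$ satisfies $\phi(x) \geq L(x)$ for all $x$ in the domain.

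First I would specialize to $a = \mathbb{E}[\chi]$, which requires only that the mean exist and lie in the interior of the domain of $\phi$. By convexity there is a supporting line at $a$, and so pointwise (i.e., for every realization of $\chi$) we have
\begin{equation}
\phi(\chi) \;\geq\; \phi\bigl(\mathbb{E}[\chi]\bigr) + c\bigl(\chi - \mathbb{E}[\chi]\bigr).
\end{equation}
Taking expectations of both sides, using linearity of $\mathbb{E}$ together with the identity $\mathbb{E}\bigl[\chi - \mathbb{E}[\chi]\bigr] = 0$, yields
\begin{equation}
\mathbb{E}[\phi(\chi)] \;\geq\; \phi\bigl(\mathbb{E}[\chi]\bigr) + c \cdot 0 \;=\; \phi\bigl(\mathbb{E}[\chi]\bigr),
\end{equation}
which is exactly the stated inequality.

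The main subtlety, rather than a genuine obstacle, is justifying the existence of the supporting line. For a convex function on an open interval this follows from the fact that the one-sided derivatives $\phi'_-(a)$ and $\phi'_+(a)$ exist at every interior point with $\phi'_-(a) \leq \phi'_+(a)$; choosing any $c \in [\phi'_-(a), \phi'_+(a)]$ produces the desired subgradient, and if $\phi$ is differentiable one may simply take $c = \phi'(a)$. One should also check that $\mathbb{E}[\phi(\chi)]$ is well-defined as an extended real: the lower bound from the supporting line forces $\phi(\chi) \geq L(\chi)$ with $\mathbb{E}[L(\chi)] = \phi(\mathbb{E}[\chi])$ finite, so the negative part of $\phi(\chi)$ is integrable and $\mathbb{E}[\phi(\chi)]$ is well-defined in $(-\infty, +\infty]$. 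The argument generalizes without change to $\chi$ taking values in $\mathbb{R}^n$ by replacing the supporting line with a supporting hyperplane from the subdifferential of $\phi$ at $\mathbb{E}[\chi]$.
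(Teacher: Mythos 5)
Your proof is correct and complete: the supporting-line (subgradient) argument at $a = \mathbb{E}[\chi]$, followed by taking expectations so that the linear term vanishes, is the standard proof of Jensen's inequality, and your remarks on the existence of the subgradient and the well-definedness of $\mathbb{E}[\phi(\chi)]$ address the only real subtleties. The paper itself offers no proof of this lemma (it is stated in the appendix with only a citation to Jensen), so there is no alternative argument to compare against.
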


\begin{lem}\label{lem:SingularValue_appendix}
Let $X_1,X_2,\dots,X_k$ be $N\times N$ matrices. For  $j = 1, \ldots, k$, let
the singular values of $X_j$ be ordered by
\begin{equation}
    \sigma_1(X_j)\geq\sigma_2(X_j)\geq\cdots\geq \sigma_N(X_j) \geq 0.
\end{equation}
Similarly, for $\mathbf{X}_k = \prod_{j=1}^k X_j$ we let the singular values
of $\mathbf{X}_k$ be ordered by
\begin{equation}
    \sigma_1( \mathbf{X}_k ) \geq \sigma_2( \mathbf{X}_k ) \geq \cdots \geq
    \sigma_N( \mathbf{X}_k ) \geq 0.
\end{equation}
Then for $p>0$,
\begin{equation}\label{eq:sing_val_bound_appendix}
    \sum_{i=1}^N\sigma_i^p( \mathbf{X}_k )\leq\sum_{i=1}^N\prod_{j=1}^k
    \sigma_i^p(X_j).
\end{equation}

\begin{proof}
We proceed by induction on $k$; from Theorem 3.3.14 of \cite{HJ} we have for
arbitrary $N \times N$ matrices $X_1$, $X_2$ that
\begin{equation}
    \sum_{i=1}^N \sigma_i^p(X_1 X_2) = \sum_{i=1}^N \sigma_i^p (\mathbf{X}_2)
    \ \leq\ \sum_{i=1}^N \prod_{j=1}^2 \sigma_i^p (X_j),
\end{equation}
proving the base case $k=2$. Now assume \ref{eq:sing_val_bound_appendix} for $k=n$
and consider the case $k=n+1$. Observe
\begin{align}
    \sum_{i=1}^N \sigma_i^p (\mathbf{X}_{n+1}) \ &=\ \sum_{i=1}^N \sigma_i^p
    \left( X_1 \cdots X_{n-1} Y_n \right)
\end{align}
where $Y_n = X_n X_{n+1}$. By the inductive hypothesis, we have
\begin{align}\label{eq:inductive_step_ineq1}
    \sum_{i=1}^N \sigma_i^p \left( X_1 \cdots X_{n-1} Y_n \right) \ &\leq \
    \sum_{i=1}^N  \left(\prod_{j=1}^{n-1}\sigma_i^p (X_j) \right)
    \sigma_i^p(Y_n) \nonumber \\
    \ &=\ \sum_{i=1}^N  \left(\prod_{j=1}^{n-1}\sigma_i^p (X_j) \right)
    \sigma_i^p(X_n X_{n+1}).
\end{align}
We define
\begin{equation}
    \alpha_i \ = \ \left( \prod\limits_{j=1}^{n-1} \sigma_i^p (X_j) \right)
    \sigma_i^p(X_n X_{n+1}) \qquad \text{and}\qquad \beta_i \ = \
    \prod\limits_{j=1}^{n+1} \sigma_i^p (X_j).
\end{equation}
Since $\sigma_1(X_j) \geq \sigma_2(X_j) \geq\cdots \geq \sigma_N (X_j) \geq 0$,
it follows $\alpha_i \geq \alpha_2 \geq \cdots \geq \alpha_N \geq 0$ and
$\beta_1 \geq \beta_2 \geq \cdots \geq \beta_N \geq 0$. By Theorem 3.3.4 from
\cite{HJ} we know
\begin{equation}
    \prod_{i=1}^N \sigma_i^p(X_n X_{n+1}) \ \leq\ \prod_{i=1}^N \sigma_i^p(X_n)
    \sigma_i^p(X_{n+1})
\end{equation}
which implies
\begin{equation}\label{eq:alpha_beta_ineq}
\prod_{i=1}^N \sigma_i^p (X_n X_{n+1}) \prod_{i=1}^N \prod_{j=1}^{n-1}
\sigma_i^p(X_j) \ \leq \ \prod_{i=1}^N \sigma_i^p(X_n) \sigma_i^p(X_{n+1})
\prod_{i=1}^N \prod_{j=1}^{n-1} \sigma_i^p (X_j).
\end{equation}
Since
\begin{equation}
    \prod_{i=1}^N \sigma_i^p (X_n X_{n+1}) \prod_{i=1}^N \prod_{j=1}^{n-1}
    \sigma_i^p (X_j) \ = \ \prod_{i=1}^N \left( \sigma_i^p (X_n X_{n+1})
    \prod_{j=1}^{n-1}\sigma_i^p(X_j) \right)
\end{equation}
and
\begin{align}
    \prod_{i=1}^N \sigma_i^p(X_n) \sigma_i^p(X_{n+1}) \prod_{i=1}^N
    \prod_{j=1}^{n-1} \sigma_i^p(X_j) &= \prod_{i=1}^N \left(
    \sigma_i^p(X_n) \sigma_i^p(X_{n+1}) \prod_{j=1}^{n-1}\sigma_i^p(X_j)
    \right) \nonumber \\
    \ &=\ \prod_{i=1}^N\prod_{j=1}^{n+1}\sigma_i^p(X_j)
\end{align}
it follows that \eqref{eq:alpha_beta_ineq} becomes
\begin{align}
    \prod_{i=1}^N \left( \sigma_i^p(X_n X_{n+1})\prod_{j=1}^{n-1}
    \sigma_i^p(X_j) \right) \ &\leq\ \prod_{i=1}^N\prod_{j=1}^{n+1}
    \sigma_i^p(X_j) \nonumber \\
    \prod_{i=1}^N\alpha_i \ &\leq \ \prod_{i=1}^N\beta_i
\end{align}
which, by Corollary 3.3.10 of \cite{HJ}, implies
\begin{align}
\sum_{i=1}^N\alpha_i &\leq \sum_{i=1}^N\beta_i.
\end{align}
It then follows that
\begin{align}\label{eq:inductive_step_ineq2}
\sum_{i=1}^N \left(\sigma_i^p(X_k X_{k+1})\prod_{j=1}^{n-1}\sigma_i^p(X_j)
\right) &\leq \sum_{i=1}^N \left( \prod_{j=1}^{n+1}\sigma_i^p(X_j)\right).
\end{align}
Combining \eqref{eq:inductive_step_ineq1} and \eqref{eq:inductive_step_ineq2}
yields
\begin{equation}
     \sum_{i=1}^N \sigma_i^p (X_1 X_2\cdots X_k) \ \leq \ \sum_{i=1}^N
     \prod_{j=1}^k\sigma_i^p(X_j)
\end{equation}
as desired.
\end{proof}
\end{lem}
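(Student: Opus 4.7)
The plan is to prove the claim by induction on the number of factors $k$. For the base case $k=2$, I would invoke the Horn singular value inequality
\begin{equation*}
\sum_{i=1}^N \sigma_i^p(X_1 X_2) \;\leq\; \sum_{i=1}^N \sigma_i^p(X_1)\,\sigma_i^p(X_2),
\end{equation*}
which can be derived from the partial-product inequality $\prod_{i=1}^m \sigma_i(X_1 X_2) \leq \prod_{i=1}^m \sigma_i(X_1)\sigma_i(X_2)$ valid for every $m \leq N$. This latter statement expresses log-majorization of the two decreasing nonnegative sequences; since $p > 0$, log-majorization passes to weak majorization of the $p$-th powers (because $t \mapsto e^{pt}$ is increasing and convex), and summing recovers the sum inequality.

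For the inductive step, I would assume the inequality for any product of $n$ matrices and consider $\mathbf{X}_{n+1} = X_1 \cdots X_{n-1}(X_n X_{n+1})$. Grouping the last two factors as $Y_n := X_n X_{n+1}$ and applying the inductive hypothesis to the $n$-fold product $X_1, \ldots, X_{n-1}, Y_n$ yields
\begin{equation*}
\sum_{i=1}^N \sigma_i^p(\mathbf{X}_{n+1}) \;\leq\; \sum_{i=1}^N \left(\prod_{j=1}^{n-1}\sigma_i^p(X_j)\right)\sigma_i^p(Y_n).
\end{equation*}
The remaining task is to ``expand'' $\sigma_i^p(Y_n)$ into $\sigma_i^p(X_n)\sigma_i^p(X_{n+1})$ inside the sum. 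Setting $\alpha_i := \left(\prod_{j=1}^{n-1}\sigma_i^p(X_j)\right)\sigma_i^p(Y_n)$ and $\beta_i := \prod_{j=1}^{n+1}\sigma_i^p(X_j)$, both are decreasing sequences of nonnegative reals, and the partial-product Horn inequality applied to $Y_n = X_n X_{n+1}$, multiplied by the common decreasing factor $\prod_{j=1}^{n-1}\sigma_i^p(X_j)$, produces log-majorization of $(\alpha_i)$ by $(\beta_i)$. Log-majorization of decreasing nonnegative sequences implies weak majorization, hence $\sum_i \alpha_i \leq \sum_i \beta_i$, closing the induction.

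The main obstacle I anticipate is this last bridging step. A mere total-product inequality $\prod_{i=1}^N \alpha_i \leq \prod_{i=1}^N \beta_i$ does not by itself imply the sum inequality, so one must genuinely carry partial-product information through the inductive step, and verify at each stage that both sequences are ordered and nonnegative so that the log-majorization to weak-majorization conversion applies. Once this bookkeeping is in place, the rest of the proof is a careful application of Horn's classical singular value product inequality together with elementary majorization theory.
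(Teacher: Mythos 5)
Your proposal follows the same strategy as the paper's proof: induction on $k$, with the base case $k=2$ from Horn's singular value product inequality, and the inductive step obtained by grouping the last two factors as $Y_n = X_n X_{n+1}$, applying the inductive hypothesis to the $n$-fold product $X_1,\dots,X_{n-1},Y_n$, and then converting $\sigma_i^p(Y_n)$ into $\sigma_i^p(X_n)\sigma_i^p(X_{n+1})$ inside the sum via majorization. The one place where you genuinely improve on the paper is the bridging step you flag as the main obstacle. The paper defines the same sequences $\alpha_i$ and $\beta_i$, but only records the single full-product inequality $\prod_{i=1}^N \alpha_i \le \prod_{i=1}^N \beta_i$ before citing the weak-majorization corollary (Corollary 3.3.10 of Horn--Johnson) to get $\sum_i \alpha_i \le \sum_i \beta_i$; as you correctly observe, that corollary requires the partial-product inequalities $\prod_{i=1}^m \alpha_i \le \prod_{i=1}^m \beta_i$ for \emph{every} $m \le N$, and the full product alone does not imply the sum inequality (e.g.\ $\alpha=(2,0.4)$, $\beta=(1,1)$). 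Your plan of carrying the full log-majorization $\prod_{i=1}^m \sigma_i(X_nX_{n+1}) \le \prod_{i=1}^m \sigma_i(X_n)\sigma_i(X_{n+1})$ through the common decreasing factor $\prod_{j=1}^{n-1}\sigma_i^p(X_j)$, and then invoking the increasing-convex-function transfer from log-majorization to weak majorization, is exactly the repair needed; the underlying facts are all in Horn--Johnson, so the lemma is true, but your write-up is the more careful one.
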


\begin{lem}{(H\"older's Trace Inequality.)}\label{lem:1-holderTrace_appendix}
Let $X$, $Y$ be $N \times N$ matrices, and $p,q>0$ be real numbers such that
$p^{-1} + q^{-1} = 1$. Then
\begin{equation}
    \abs{\Tr(XY)} \ \leq\ \norm{X}_p \norm{Y}_q .
\end{equation}

\begin{proof}
Let $\Vec{S}(X) = \left[ \sigma_1(X), \sigma_2(X), \ldots, \sigma_N(X) \right]$;
it follows immediately from Definition \ref{def:p-Schatten} that $\norm{X}_p =
\norm{\Vec{S}(X)}_p$. By Von Neumann's Trace Inequality \cite{VN}, we then have
\begin{equation}
    \abs{\Tr (XY)} \ \leq\ \left\langle \Vec{S}(X),\Vec{S}(Y) \right\rangle.
\end{equation}
A straightforward application of H\"older's Inequality yields the desired result:
\begin{equation}
    \left\langle \Vec{S}(X), \Vec{S}(Y) \right\rangle \ \leq\ \norm{\Vec{S}(X)}_p
    \norm{\Vec{S}(Y)}_q = \norm{X}_p \norm{Y}_q.
\end{equation}
\end{proof}
\end{lem}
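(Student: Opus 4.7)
The plan is to reduce this matrix trace inequality to the classical scalar H\"older Inequality, using singular values as the bridge from the noncommutative setting to the vector setting. By Definition \ref{def:p-Schatten}, the Schatten $p$-norm of a matrix coincides with the $\ell^p$-norm of its vector of singular values: if I set $\Vec{S}(X) = (\sigma_1(X), \ldots, \sigma_N(X))$, then $\norm{X}_p = \norm{\Vec{S}(X)}_p$, and similarly for $Y$. This identification is what lets us translate a statement about products of matrices into a statement about ordinary inner products of nonnegative real vectors.

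With that identification in mind, the first real step is to apply Von Neumann's Trace Inequality (stated earlier in the appendix) to obtain
\[
\abs{\Tr(XY)} \ \leq\ \sum_{i=1}^N \sigma_i(X)\,\sigma_i(Y) \ = \ \langle \Vec{S}(X),\Vec{S}(Y)\rangle.
\]
This bounds $\abs{\Tr(XY)}$ by an ordinary inner product of two vectors in $\R_{\geq 0}^N$ and absorbs all the genuinely noncommutative content of the problem. The second step is then purely scalar: invoke the classical H\"older Inequality on $\R^N$ with conjugate exponents $p$ and $q$ satisfying $p^{-1}+q^{-1}=1$, which yields
\[
\langle \Vec{S}(X),\Vec{S}(Y)\rangle \ \leq \ \norm{\Vec{S}(X)}_p \, \norm{\Vec{S}(Y)}_q \ = \ \norm{X}_p \norm{Y}_q,
\]
where the final equality re-invokes the Schatten--$\ell^p$ identification from the first paragraph. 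Combining the two displays gives the claim.

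The main obstacle is Von Neumann's Trace Inequality itself: this is where all the authentic matrix content resides, since it says that $\abs{\Tr(XY)}$ is controlled by the \emph{sorted} pairing of singular values, a statement that is far from obvious and whose standard proof goes through a simultaneous singular value decomposition argument. However, because the appendix lists Von Neumann as a cited known result, the proof of Lemma \ref{lem:1-holderTrace_appendix} reduces to the two-line chain above. The only conceptual subtlety is recognizing that the $p$-Schatten norm was defined precisely so that this reduction from matrices to scalars is seamless; everything else is bookkeeping.
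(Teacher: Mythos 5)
Your proposal is correct and follows essentially the same route as the paper's proof: identify $\norm{X}_p$ with $\norm{\Vec{S}(X)}_p$, apply Von Neumann's Trace Inequality to reduce to the inner product of singular-value vectors, and finish with the classical scalar H\"older Inequality. No gaps.
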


\begin{thm}{(Generalized H\"older's Trace Inequality.)}
\label{thm:GeneralHolderTrace_appendix}
Let $X_1,X_2,\dots,X_k$ be $N\times N$ matrices and $p_1,p_2,\dots,p_k \in \mathbb{R}^+$
such that $\sum_{i=1}^k p_1^{-1} = 1$. Then
\begin{equation}
    \abs{\Tr(X_1 X_2 \cdots X_k)} \ \leq\ \prod_{i=1}^k \norm{X_i}_{p_i}.
\end{equation}

\begin{proof}
Suppose that we have $p_1,p_2,\dots,p_k \in \mathbb{R}^+$ such that
$\sum_{i=1}^k p_i^{-1} = 1$. As $k>1$, it follows that $p_i > 1$ and
$p_1 / (p_1-1)>0$. Hence by Lemma \ref{lem:1-holderTrace_appendix} we have
\begin{equation}\label{eq:trace_norm_prod_1}
    \abs{\Tr(X_1(X_2\cdots X_k))} \ \leq\ \norm{X_1}_{p_1}
    \norm{X_2\cdots X_k}_{\frac{p_1}{p_1 -1}}.
\end{equation}
By Lemma \ref{lem:SingularValue_appendix},
\begin{align}\label{eq:trace_norm_prod_2}
    \norm{X_2 \cdots X_k}_{\frac{p_1}{p_1-1}} \ &=\
    \left(\sum_{i=1}^N\sigma_i^{\frac{p_1}{p_1-1}}(X_2\cdots X_k)\right)^
    {\frac{p_1-1}{p_1}} \nonumber \\
    \ &\leq\ \left(\sum_{i=1}^N\prod_{j=2}^k\sigma_i^{\frac{p_1}{p_1-1}}(X_j)
    \right)^{\frac{p_1-1}{p_1}}.
\end{align}
Observe for $i = 1, 2, \ldots, k$ that $p_i(p_1-1) / p_1 > 0$, so that
\begin{align}
    \sum_{i=2}^k \frac{p_1}{p_i(p_1-1)} \ &=\ \frac{p_1}{p_1-1}
    \sum\limits_{i=2}^k\frac{1}{p_i} \nonumber \\
    \ &=\ \frac{p_1}{p_1-1} \left(1-\frac{1}{p_1} \right) \nonumber \\
    \ &=\ 1.
\end{align}
Thus, by Generalized H\"older's Inequality \cite{Cw}, we have
\begin{align}
    \left(\sum_{i=1}^N\prod_{j=2}^k\sigma_i^{\frac{p_1}{p_1-1}}(X_j)\right)^
    {\frac{p_1-1}{p_1}} \ &\leq\ \prod_{j=2}^k\left(\sum_{i=1}^N
    \sigma_i^{\frac{p_1}{p_1-1}\cdot\frac{p_i(p_1-1)}{p_1}}(X_j)\right)^
    {\frac{p_1-1}{p_1}\cdot \frac{p_1}{p_i(p_1-1)}} \nonumber \\
    \ &=\ \prod_{j=2}^k\left(\sum_{i=1}^N\sigma_i^{p_i}(X_j)\right)^{\frac{1}{p_i}} \nonumber \\
    \ &=\ \prod_{j=2}^k \norm{X_j}_{p_i}.
\end{align}
Substituting the above into \eqref{eq:trace_norm_prod_1} and
\eqref{eq:trace_norm_prod_2} yields the desired result
\begin{equation}
    \abs{\Tr (X_1 X_2 \cdots X_k)} \leq \prod_{i=1}^k \norm{X_i}_{p_i}.
\end{equation}
\end{proof}
\end{thm}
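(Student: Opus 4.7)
The plan is to prove the Generalized H\"older Trace Inequality by induction on $k$, or equivalently by peeling off one factor and reducing to the two-matrix case already available as Lemma \ref{lem:1-holderTrace_appendix}. The two-matrix Schatten H\"older inequality is the fundamental analytic input; the singular-value product bound in Lemma \ref{lem:SingularValue_appendix} supplies the bridge between the Schatten norm of a matrix product and the Schatten norms of its factors; and the classical Generalized H\"older Inequality for sequences handles the final exponent bookkeeping.

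First I would split off the leading factor and write
\begin{equation}
    \abs{\Tr(X_1 X_2 \cdots X_k)} \ = \ \abs{\Tr\bigl(X_1 \cdot (X_2 \cdots X_k)\bigr)}.
\end{equation}
Defining $q$ by $q^{-1} = 1 - p_1^{-1} = \sum_{i=2}^k p_i^{-1}$, and noting $q>0$ since $k>1$ forces $p_1 > 1$, Lemma \ref{lem:1-holderTrace_appendix} gives
\begin{equation}
    \abs{\Tr(X_1 X_2 \cdots X_k)} \ \leq\ \norm{X_1}_{p_1}\,\norm{X_2 \cdots X_k}_q.
\end{equation}
So the task reduces to showing $\norm{X_2 \cdots X_k}_q \leq \prod_{i=2}^k \norm{X_i}_{p_i}$.

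Next, expanding the Schatten norm and applying Lemma \ref{lem:SingularValue_appendix} with exponent $q$,
\begin{equation}
    \norm{X_2\cdots X_k}_q^q \ = \ \sum_{i=1}^N \sigma_i^q(X_2 \cdots X_k) \ \leq\ \sum_{i=1}^N \prod_{j=2}^k \sigma_i^q(X_j).
\end{equation}
The exponents $r_j := p_j/q$ are positive and satisfy $\sum_{j=2}^k r_j^{-1} = q\sum_{j=2}^k p_j^{-1} = 1$, so the classical Generalized H\"older Inequality for nonnegative sequences applies to the inner sum, yielding
\begin{equation}
    \sum_{i=1}^N \prod_{j=2}^k \sigma_i^q(X_j) \ \leq\ \prod_{j=2}^k \left(\sum_{i=1}^N \sigma_i^{q r_j}(X_j)\right)^{1/r_j} \ = \ \prod_{j=2}^k \left(\sum_{i=1}^N \sigma_i^{p_j}(X_j)\right)^{q/p_j}.
\end{equation}
Taking $q$-th roots produces $\norm{X_2\cdots X_k}_q \leq \prod_{j=2}^k \norm{X_j}_{p_j}$, and combining with the earlier inequality completes the proof.

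The most delicate step is the exponent bookkeeping in the third display: one must verify both that $q = p_1/(p_1 - 1)$ is the correct conjugate, and that the induced exponents $r_j = p_j/q$ form a valid H\"older tuple for the sequence inequality. Once that is confirmed, everything collapses cleanly. An alternative presentation is a direct induction on $k$: the base case $k=2$ is exactly Lemma \ref{lem:1-holderTrace_appendix}, and the inductive step performs the same splitting, using the inductive hypothesis in place of the sequence H\"older step to bound $\norm{X_2\cdots X_k}_q$. Both routes rely on the same two ingredients, so I would favor the non-inductive presentation above for its transparency.
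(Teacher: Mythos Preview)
Your proof is correct and follows essentially the same route as the paper: split off $X_1$ via the two-matrix H\"older trace inequality (Lemma \ref{lem:1-holderTrace_appendix}) with conjugate exponent $q = p_1/(p_1-1)$, bound $\norm{X_2\cdots X_k}_q$ using the singular-value product inequality (Lemma \ref{lem:SingularValue_appendix}), and finish with the sequence form of the Generalized H\"older Inequality after checking the rescaled exponents $r_j = p_j/q$ form a valid tuple. The only difference is notational---the paper writes out $p_1/(p_1-1)$ and $p_i(p_1-1)/p_1$ explicitly where you introduce $q$ and $r_j$---and your version is arguably cleaner for it.
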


\subsection{The Kronecker Product} Another possible way of combining two random matrices is the Kronecker product. The Kronecker product $A \otimes B$ of square matrices $A$ and $B$ with sizes $n$ and $m$ is the square matrix of size $mn$ formed by replacing each entry $a_{ij}$ of $A$ by the block $a_{ij}B$.

\begin{defi}
The Kronecker product of an $n\times n$ matrix $A$ and an $m\times m$ matrix $B$ is the $nm \times nm$ block matrix
\begin{equation}
    A \otimes B \ = \
    \left[
    \begin{array}{cccc}
    a_{11}B & a_{12}B & \cdots & a_{1n}B \\
    a_{21}B & a_{22}B & \cdots & a_{2n}B \\
    \vdots  & \vdots  & \ddots & \vdots  \\
    a_{n1}B & a_{n2}B & \cdots & a_{nn}B
    \end{array}
    \right]
\end{equation}
formed by replacing each entry $a_{ij}$ of $A$ by the block $a_{ij}B$.
\end{defi}

The Kronecker product has the following useful property.
\begin{prop}\label{lem:kroneckerspectra}
Given square matrices $A$ and $B$ with eigenvalues $\lambda_i$ and $\mu_j$, respectively, the eigenvalues of $A\otimes B$ are precisely the set of pairwise products $\lambda_i\mu_j$ of eigenvalues from the spectra of $A$ and $B$, respectively.
\end{prop}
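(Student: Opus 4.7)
The plan is to exploit the mixed-product identity $(A\otimes B)(C\otimes D) = (AC)\otimes(BD)$, which holds whenever the ordinary products on the right are defined, together with Schur triangularization. First I would establish the mixed-product identity by a direct block-entry computation from the definition of the Kronecker product. As an immediate corollary, if $Av=\lambda v$ and $Bw=\mu w$, then $(A\otimes B)(v\otimes w) = (\lambda v)\otimes(\mu w) = \lambda\mu\,(v\otimes w)$, so every pairwise product $\lambda_i\mu_j$ is an eigenvalue of $A\otimes B$. This ``easy direction'' does not yet rule out extra eigenvalues, nor does it give the correct multiplicities when $A$ or $B$ fails to be diagonalizable, so a pure simultaneous-eigenvector argument is insufficient in general.

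To capture all $nm$ eigenvalues with correct multiplicity, I would apply Schur's theorem to write $A = U_1 T_1 U_1^*$ and $B = U_2 T_2 U_2^*$, with $U_1,U_2$ unitary and $T_1,T_2$ upper triangular carrying the $\lambda_i$ and $\mu_j$ on their diagonals. Two applications of the mixed-product identity then give
\begin{equation}
A\otimes B \;=\; (U_1\otimes U_2)\,(T_1\otimes T_2)\,(U_1\otimes U_2)^*,
\end{equation}
and a third application shows $(U_1\otimes U_2)(U_1\otimes U_2)^* = (U_1U_1^*)\otimes(U_2U_2^*) = I_n\otimes I_m = I_{nm}$, so $U_1\otimes U_2$ is unitary and the spectrum of $A\otimes B$ coincides (as a multiset) with that of $T_1\otimes T_2$.

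Finally, I would read off the diagonal of $T_1\otimes T_2$ directly. Its $(i,j)$-th $m\times m$ block equals $(T_1)_{ij}T_2$; since $(T_1)_{ij}=0$ for $i>j$ and $T_2$ itself is upper triangular, the whole matrix $T_1\otimes T_2$ is upper triangular, with diagonal blocks $\lambda_i T_2$ for $i=1,\ldots,n$, each contributing the entries $\lambda_i\mu_1,\ldots,\lambda_i\mu_m$. Hence the diagonal of $T_1\otimes T_2$ is precisely the multiset $\{\lambda_i\mu_j : 1\le i\le n,\; 1\le j\le m\}$, and because the spectrum of a triangular matrix is its diagonal (with multiplicity), the spectrum of $A\otimes B$ is exactly this multiset. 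The main obstacle is the bookkeeping in this last step: one must verify carefully that the block structure of $T_1\otimes T_2$ preserves upper-triangularity and enumerates the $nm$ diagonal entries as claimed. No other step presents real difficulty, since the mixed-product identity and Schur's theorem do all of the structural work.
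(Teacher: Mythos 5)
Your proof is correct and complete. Note that the paper does not actually prove this proposition: it remarks only that the result ``follows from the bilinearity of the Kronecker product'' and defers to the cited reference \cite{La}. Your argument is essentially the standard one found there: the mixed-product identity $(X\otimes Y)(Z\otimes W)=(XZ)\otimes(YW)$ plus Schur triangularization, reducing to the observation that $T_1\otimes T_2$ is upper triangular with diagonal $\{\lambda_i\mu_j\}$. You are also right to flag that the eigenvector computation $(A\otimes B)(v\otimes w)=\lambda\mu\,(v\otimes w)$ --- which is what the paper's ``bilinearity'' remark amounts to --- only establishes the result for diagonalizable $A$ and $B$ and does not by itself account for multiplicities or rule out additional eigenvalues; the Schur step is what closes that gap. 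The one small point you use implicitly is $(U_1\otimes U_2)^*=U_1^*\otimes U_2^*$, needed so that $U_1\otimes U_2$ is unitary and the similarity is as claimed; this is immediate from the block definition, but it is a separate fact from the mixed-product identity and deserves a sentence.
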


This proposition follows from the bilinearity of Kronecker product.  For a proof of Proposition \ref{lem:kroneckerspectra}, see \cite{La}.

\begin{thm}\label{thm:kroneckermoments}
Let $A$ and $B$ be square random matrices chosen independently from two (possibly different) matrix ensembles. If the moments of the eigenvalue distributions of $A$ and $B$ all exist, then the normalized $k$\textsuperscript{th} moment of the eigenvalue distribution of $A \otimes B$ is the product of the normalized $k$\textsuperscript{th} moments of the eigenvalue distributions of $A$ and $B$.
\end{thm}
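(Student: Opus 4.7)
The plan is to reduce the theorem to a direct algebraic identity on spectra via Proposition~\ref{lem:kroneckerspectra}, then to split the expectation using the independence of the two ensembles. First, for a fixed realization of $A$ (of size $n$) and $B$ (of size $m$) with eigenvalues $\lambda_i(A)$ and $\mu_j(B)$, Proposition~\ref{lem:kroneckerspectra} enumerates the spectrum of $A \otimes B$ as the multiset $\{\lambda_i(A)\mu_j(B) : 1 \le i \le n,\ 1 \le j \le m\}$, so that
\[
\sum_{i=1}^n \sum_{j=1}^m \bigl(\lambda_i(A)\mu_j(B)\bigr)^k \ = \ \left(\sum_{i=1}^n \lambda_i^k(A)\right)\left(\sum_{j=1}^m \mu_j^k(B)\right).
\]

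Next, I would apply the normalization appropriate to the $nm \times nm$ matrix $A \otimes B$. Dividing by $(nm)^{k/2+1}$ and using the factorization $(nm)^{k/2+1} = n^{k/2+1}\cdot m^{k/2+1}$, the normalized $k$-th power sum of $A \otimes B$ splits pointwise as the product of the normalized $k$-th power sums of $A$ and of $B$. Taking expectations over the joint ensemble and invoking the independence of $A$ and $B$ lets the expectation of the product be replaced by the product of expectations. The hypothesis that all moments of $A$ and $B$ exist ensures each factor is finite, so no integrability issue arises.

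Finally, if one wishes to pass to the limit as $n,m \to \infty$ (interpreting the statement in the sense of limiting averaged moments), the passage is unobstructed: the limits of the two factors exist separately and are finite by assumption, so the limit of the product equals the product of limits. The entire argument is essentially a one-line computation built on top of Proposition~\ref{lem:kroneckerspectra}, and there is no combinatorial obstacle analogous to the crossing/pairing analysis in the disco construction. The only point that requires a moment's care is the normalization bookkeeping; the exponent $k/2+1$ is additive in $\log N$, which is precisely why the scalings of $A$ and $B$ combine cleanly into the scaling for $A \otimes B$.
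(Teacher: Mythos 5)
Your proposal is correct and follows essentially the same route as the paper: invoke Proposition~\ref{lem:kroneckerspectra} to factor the $k$-th power sum of $A \otimes B$, split the normalization $(nm)^{k/2+1} = n^{k/2+1} m^{k/2+1}$, and use independence to factor the expectation. Your added remarks on integrability and on passing to the $n,m \to \infty$ limit are harmless elaborations of the same one-line computation.
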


Now we are able to give a proof of Theorem \ref{thm:kroneckermoments} using the above property and the independence of chosen matrices.

\begin{proof}[Proof of Theorem \ref{thm:kroneckermoments}]
Suppose that $A$ has eigenvalues $\lambda_i$, $1 \le i \le n$, and $B$ has eigenvalues $\mu_j$, $1 \le j \le m$. By Proposition \ref{thm:kroneckermoments}, the eigenvalues of $A \otimes B$ are $\lambda_i \mu_j$, $1 \le i \le n, 1 \le j \le m$. Since $A$ and $B$ are independent, by the Eigenvalue Trace Lemma the $k$\textsuperscript{th} moment of the eigenvalue distribution of $A \otimes B$ is
\begin{equation}
M_{k}(A \otimes B) \ =\ \dfrac{\sum\limits_{i=1}^{n} \sum\limits_{j=1}^{m} (\lambda_i \mu_j)^k}{(mn)^{k/2+1}} \ =\ \dfrac{\sum\limits_{i=1}^{n} (\lambda_i)^{k}}{m^{k/2+1}} \dfrac{\sum\limits_{j=1}^{m} (\mu_j)^k}{n^{k/2+1}} \ =\ M_k(A) M_k(B),
\end{equation}
as desired. 
\end{proof}

This allows us to take two matrix ensembles whose eigenvalue distributions are understood, and combine them via the Kronecker product into a family with a new distribution. We can use this approach to build families of matrices exhibiting behavior that is hybrid between previously studied behaviors, and which can possibly model analogous behaviors in number theory.

\section*{Acknowledgements}
This work was performed at the 2019 SMALL REU at Williams College; it is a
pleasure to thank Williams College and the REU organizers for their help
and support. We would also like to thank Shiliang Gao and Dr. Jun Yin. The first three authors were supported by
NSF Grant \#DMS-1659037; the last two authors were supported by the Department
of Mathematics at the University of Michigan, Ann Arbor.

\bigskip

\end{document}